\definecolor{slateblue}{rgb}{0,0.0,0.8}
\theoremstyle{definition}
\newtheorem{definition}{Definition}[section]
\newtheorem{example}[definition]{Example}
\theoremstyle{remark}
\newtheorem{remark}[definition]{Remark}
\theoremstyle{plain}
\newtheorem{theorem}[definition]{Theorem}
\newtheorem{proposition}[definition]{Proposition}
\newtheorem{corollary}[definition]{Corollary}
\newtheorem{notation}[definition]{Notation}
\newtheorem{construction}[definition]{Construction}
\newtheorem{setup}[definition]{Setup}
\newtheorem{Def}{Definition}[section]
\providecommand{\customgenericname}{}
\newcommand{\newcustomtheorem}[2]{%
	\newenvironment{#1}[1]
	{%
		\renewcommand\customgenericname{#2}%
		\renewcommand\theinnercustomgeneric{##1}%
		\innercustomgeneric
	}
	{\endinnercustomgeneric}
}
\begin{document}
	
	%	\title{Cohomology theories of diffeological spaces: A sheaf-theoretic  point of view}
	
	%		\title[Quasi-\v{C}ech-de Rham theorem for diffeological spaces]{Quasi-\v{C}ech-de Rham theorem for diffeological spaces}	
	\title[Diffeological \v{C}ech cohomology]{Diffeological \v{C}ech cohomology}

	\author[A. Ahmadi]{Alireza Ahmadi}
	\address{Alireza Ahmadi, Department of Mathematical Sciences, Yazd University, 89195--741, Yazd, Iran }
	\email{ahmadi@stu.yazd.ac.ir, alirezaahmadi13@yahoo.com}

	\subjclass[2020]{Primary 57P99; Secondary 55N05}
	%	You can download the complete list at:
	%http://www.ams.org/msc/pdfs/classifications2010.pdf}

\keywords{diffeological spaces, \v{C}ech cohomology, de Rham theorem, generalized Mayer-Vietoris sequence, fiber bundles.}

%\dedicatory{This article is dedicated to ...}

\maketitle
\begin{abstract}
	Motivated by problems in which data are given over covering generating families,
	we suggest a new cohomology theory for diffeological spaces, called diffeological  \v{C}ech cohomology,
	which is an exact $ \partial $-functor of the section functor for sheaves on diffeological spaces. As applications,  under the situations of a setup, i) the generalized Mayer-Vietoris sequence for a diffeological space is established; ii) a version of the de Rham theorem is obtained, which connects diffeological \v{C}ech cohomology to the de Rham cohomology. Moreover, we characterize the isomorphism classes of diffeological fiber, principal, and vector bundles as (non-abelian) diffeological \v{C}ech cohomology in degree 1.
\end{abstract}
\tableofcontents
\section{Introduction}
\v{C}ech cohomology provides a  method to get global information from local data by extending the
section functor of sheaves.
One elegant sample of this abstract approach occurs in the differential geometry of manifolds to
investigate the differential equation $ d\alpha=\omega $, for a given closed differential form $ \omega $.
As manifolds are locally contractible, one can locally solve the equation to obtain local data. In contrast, the de Rham cohomology is a natural approach to this problem, which plays the role of an invariant for manifolds and is connected to \v{C}ech cohomology under the de Rham theorem.

The purpose of this paper is to study such a problem in diffeology --- a convenient framework to do differential geometry on mapping spaces, infinite dimensional spaces, and singular spaces like orbifolds, irrational tori,  beyond the classical setting of manifolds \cite{BH}. A diffeological space is given by a set $ X $ along with a structure of \textit{diffeology}  whose elements are called \textit{plots}.
In fact, a diffeology determines which parametrizations (i.e., maps from open subsets of Euclidean
spaces) are plots by the axioms of \textit{covering}, \textit{smooth compatibility}, and
\textit{locality} that are reminiscent of properties of ordinary smooth maps.
This theory was introduced by J.-M. Souriau \cite{JMS} in the 1980s, and the main reference is the textbook  \cite{PIZ2013} by P. Iglesias-Zemmour.

Differential forms and the de Rham cohomology have nicely been developed to diffeological spaces.
However, one might not be able to solve locally the differential equation $ d\alpha=\omega $ on a diffeological space $ X $ more complicated than manifolds. Instead, one can always solve the pullbacks of the equation along global plots $ \mathbb{R}^n\rightarrow X $, $n$ ranges over
nonnegative integers, which constitute a covering generating family for $ X $. Of course, one can take any other covering generating family whose elements are defined on contractible domains\footnote{Compare with the restrictions of the differential equation on a contractible open cover in the classical setting of manifolds.}.
In other words,  solutions (data) are obtained over covering generating families, which may not be compatible to construct a global solution.

To explore the given data through the \v{C}ech machinery, we introduce a modified version of \v{C}ech cohomology for diffeological spaces, called diffeological \v{C}ech cohomology with coefficients in a (pre)sheaf. Diffeological \v{C}ech cohomology is formulated in terms of covering generating families to detect compatible data on them in the zero degree (Proposition \ref{p-0th}), and gives rise to an exact $ \partial $-functor of  the section functor for sheaves on diffeological spaces (Proposition \ref{p-long}).
It is also dual to quasi-\v{C}ech homology defined in \cite{AD}.

To investigate the relationship between diffeological \v{C}ech cohomology and the de Rham cohomology, we consider a setup (Setup \ref{setting-1}) in which the generalized Mayer-Vietoris sequence for diffeological spaces is achieved (Proposition \ref{prop-mv}). 
This leads to a version of the de Rham theorem in diffeology:
\begin{customthm}{I}
	Let $ X $ be a diffeological space. In the situation of Setup \ref{setting-1},
	if for every covering generating family $\mathcal{C}$ of $ X $, there exists a partition of unity subordinate to $\mathcal{C}$,
	then
	$ \check H^n(X;\mathbb{R})\cong H_{dR}^n(X) $, where $ \mathbb{R} $
	denotes the locally constant sheaf on $ X $.
\end{customthm}
In particular, diffeological \'{e}tale manifolds fulfill the situation of this setup, and
for a usual manifold, this version of the de Rham theorem is true.

Another aspect of diffeological \v{C}ech cohomology concerns the classifying of diffeological fiber bundles.
We first consider diffeological fiber bundles with gauge groups by the language of trivialization and transition maps, which include the usual diffeological fiber, principal, and vector bundles as special cases.
The transition maps of a diffeological fiber bundle satisfy the diffeological \v{C}ech 1-cocycle condition. Analogous to the classical situation, this leads to a bijection between 
the first diffeological \v Cech cohomology classes and isomorphism classes of diffeological fiber bundles:
\begin{customthm}{II}
	Assume that $ X $ and $ T $ are diffeological spaces and let 
	$ \varrho:G\rightarrow \mathrm{Diff}(T) $ be a smooth faithful action of a (not necessarily abelian) diffeological group $ G $ on $ T $.
	There is a bijective correspondence
	\begin{center}
		$ \Theta:\check{H}^1(X,G)\rightarrow\mathrm{Bund}_T(X,G) $
	\end{center}
	between the first diffeological \v Cech cohomology classes $ \check{H}^1(X,G) $ with the coefficients in the sheaf of smooth functions in $ G $ on $ X $ and the set of isomorphism classes of diffeological fiber bundles with gauge group $ G $ 
	and typical fiber $ T $ on $ X $.
\end{customthm}

\paragraph{\textbf{Other related works.}}
There are other kinds of \v{C}ech cohomology for diffeological spaces in the literature.
In \cite{PIZ2022}, a version of \v{C}ech cohomology is defined by P. Iglesias-Zemmour, as the
Hochschild cohomology of some gauge monoid, associated with the specific covering generating family of round plots, with coefficients in $ \mathbb{R} $. In this context, a \v{C}ech-de Rham double complex is constructed and discussed.

In \cite{DA2018}, three versions of \v{C}ech cohomologies are suggested as those on the site of plots, the site of D-open subsets, and as the cohomology of the associated cochain complex  of
sections of a presheaf. In Appendix \ref{A}, we show that the last two are the same. 

Furthermore, in \cite{KWW} another \v{C}ech cohomology is given by Krepski et al. in terms of the nebula of covering generating families, which detects the basic sections on the nebula in degree $ 0 $ (see Appendix \ref{A}). It is also proved that the first degree \v{C}ech cohomology classes for the sheaf of smooth functions to an abelian diffeological group
$ G $ classify diffeological principal $ G $-bundles.
\paragraph{\textbf{Organization.}} The paper is organized as follows. 
Section \ref{S2} is devoted to an overview of diffeological spaces and diffeological fiber bundles. In Section \ref{S3}, we recall presheaves and sheaves on diffeological spaces. In particular, a framework based on presheaves is provided
to exhibit  cohomology theories.
In Section \ref{S4}, diffeological \v{C}ech cohomology is introduced and its behavior and properties are discussed.
In Section \ref{S5}, the generalized Mayer-Vietoris sequence for  diffeological spaces is  established and a version of the de Rham theorem is given.
In particular, we show that
this version of the de Rham theorem holds for a usual manifold.
Moreover, we present the diffeological \v{C}ech-de Rham double complex and its spectral sequences.
In Section \ref{S6}, diffeological fiber bundles with gauge groups are defined.
We characterize the isomorphism classes of diffeological fiber, principal, and vector bundles as (non-abelian) diffeological \v{C}ech cohomology in degree 1.
In Appendix \ref{A}, we conclude the paper  with a detailed discussion on the \v{C}ech cohomologies suggested in \cite{DA2018}, as well as,
a remark on the \v{C}ech cohomology due to Krepski et al.

\section{Background on diffeological spaces}\label{S2}
We briefly recall the required definitions and results on diffeological spaces from \cite{PIZ2013}.
%First we recall required constructions from sheaves on diffeological spaces \cite{}.
%\begin{Def}
%
%An $n$-\textbf{domain}, for a nonnegative integer $n$, is an open subset of  Euclidean space $\mathbb{R}^n$ with the standard topology. 
%All $n$-domains, $n$ ranges over nonnegative integers, and smooth maps between them define a category denoted by $\mathsf{Domains}$.
%Objects of $\mathsf{Domains}$ are called domains.
%\end{definition}
\subsection{Basic definitions}
\begin{definition}
	An $n$-\textbf{domain}, for a nonnegative integer $n$, is an open subset of  Euclidean space $\mathbb{R}^n$ with the standard topology. 
	Any map from a domain to a set $X$ is said to be a \textbf{parametrization} in $X$.  
	The only $0$-parametrization with the value $x\in X$ is denoted by the bold letter $ \mathbf{x}$.
	A parametrization $ P $ in $ X $ with $ 0\in\mathrm{dom}(P) $ and $ P(0)=x $ is called a parametrization \textbf{centered} at $ x $.
	If the domain of definition of a parametrization $ P $, denoted by 
	$ \mathrm{dom}(P) $, is an $n$-domain, then $ P $ is called an $n$-parametrization. 
	A family $\lbrace P_i: U_i\rightarrow X\rbrace_{i\in J}$ of $n$-parametrizations is \textbf{compatible} if
	$P_i|_{U_i\cap U_j}=P_j|_{U_i\cap U_j}$, for all $i, j\in J$. 
	For such a family, the parametrization 
	$P:\bigcup_{i\in J} U_i\rightarrow X$ given by $P(r)=P_i(r)$ for $r\in U_i$, 
	is said to be the \textbf{supremum} of the family. 
	By convention, the supremum of the empty family is the empty parametrization $ \varnothing\rightarrow X $.
\end{definition} 

\begin{definition} 
	A \textbf{diffeology} $ \mathcal{D} $ on a set $ X $ is a collection of parametrizations in $ X $ with the following axioms:
	\begin{enumerate}
		\item[\textbf{D1.}] 
		The union of the images of the elements of $ \mathcal{D} $ covers $ X $.
		
		\item[\textbf{D2.}] 
		For every element $P:U\rightarrow X$ of $\mathcal{D}$ and every smooth map $F:V\rightarrow U$ between domains, the parametrization $P\circ F$ belongs to $\mathcal{D}$.
		\item[\textbf{D3.}]
		The supremum of any compatible family of elements of $\mathcal{D}$ also belongs to $\mathcal{D}$.
	\end{enumerate}
	A \textbf{diffeological space} $ (X,\mathcal{D}) $ is an underlying set $X$ equipped with a
	diffeology $\mathcal{D}$, whose elements are called the \textbf{plots} in $ X $.
	A diffeological space is just denoted by the underlying set, when the diffeology is understood.
\end{definition} 

\begin{definition} 
	A \textbf{prediffeology} on a set $ X $ is a set $ \mathcal{P} $ of parameterizations in $ X $ satisfying D1 and D2.
	A \textbf{parametrized cover} of $ X $ is a set $ \mathcal{C} $ of parameterizations in $ X $ satisfying D1.
\end{definition} 

\begin{definition}
	Let  $X$ be a set and let $\mathcal{C}$ be a parametrized cover of $X$.
	The \textbf{prediffeology generated} by $\mathcal{C}$ denoted by $\lfloor\mathcal{C}\rfloor$, consists of parametrizations $ P\circ F $, where $ P $ is an element of $\mathcal{C}$ and $ F $ is a smooth map between domains.
	The \textbf{diffeology generated} by $ \mathcal{C} $, denoted by $\langle\mathcal{C}\rangle$, is the set of  parametrizations $ P $ 
	that are the supremum of a compatible family
	$ \lbrace P_i\rbrace_{i\in J} $ of parametrizations in $ X $ with $ P_i\in\lfloor\mathcal{C}\rfloor $.
	For a diffeological space $ (X,\mathcal{D}) $,
	a \textbf{covering generating family} is a parametrized cover $\mathcal{C}$ of $ X $  generating the diffeology of the space, i.e., $\langle\mathcal{C}\rangle=\mathcal{D}$.
	Denote by $ \mathsf{CGF}(X) $ the collection of all covering generating families of the space $ X $.% Clearly, the diffeology of a diffeological space is itself a covering generating family.
\end{definition}

\begin{example}
	For any diffeological space $ X $, each of the following collections is a covering generating family:
	\begin{enumerate}
		\item[$ {\tiny\blacktriangleright}  $] 
		The diffeology of the space,
		\item[$ {\tiny\blacktriangleright}  $] 
		The collection of plots whose domains are open balls,
		\item[$ {\tiny\blacktriangleright}  $] 
		The collection of global plots $ \mathbb{R}^n\rightarrow X $ ($n$ ranges over
		nonnegative integers),
		\item[$ {\tiny\blacktriangleright}  $] 
		The collection of centered plots, i.e., plots $ U\rightarrow X $ with $ 0\in U $.
	\end{enumerate} 
\end{example} 

\begin{definition}
	Let $X$ and $Y$ be two diffeological spaces. A map $f:X\rightarrow Y$ is \textbf{smooth} if for every plot $P$ in $X$, the composition $f\circ P$ is a plot in the space $Y$.
	Denote by $ \mathsf{Diff} $ the category of diffeological spaces and smooth maps.
	The isomorphisms in the category $ \mathsf{Diff} $ are called \textbf{diffeomorphisms}.
\end{definition} 

Plots in a diffeological space are exactly the smooth parametrizations in the space.

\begin{definition}
	A map $f:X\rightarrow Y$ between diffeological spaces is a \textbf{subduction} if % $ \pi $ is surjective and 
	the set
	$ \lbrace f\circ P\mid P $ is a plot in $ X\rbrace $
	is a covering generating family for $ Y $. Obviously, every subduction is a surjective smooth map.
\end{definition}
\begin{proposition}\label{pro-subd-smd}
	(\cite[\S 1.51]{PIZ2013}).
	Let $\pi:X'\rightarrow X$ be a subduction. A map $ f:X\rightarrow Y $ is smooth if and only
	if $ f\circ\pi $ is smooth. Moreover, the map $ f $ is a subduction if and only if $ f\circ\pi $ is a subduction.
\end{proposition}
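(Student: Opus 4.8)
The plan is to isolate one technical lemma and then read both equivalences off it. The lemma I would prove first is: \emph{if $\mathcal{C}$ is a covering generating family for a diffeological space $X$ and $g\colon X\to Y$ is a map with $g\circ P$ a plot in $Y$ for every $P\in\mathcal{C}$, then $g$ is smooth.} To establish this, take an arbitrary plot $Q$ in $X$. Since $\langle\mathcal{C}\rangle=\mathcal{D}_X$, the definition of the generated diffeology writes $Q$ as the supremum of a compatible family $\{Q_i\}_{i\in J}$ with each $Q_i\in\lfloor\mathcal{C}\rfloor$, say $Q_i=P_i\circ F_i$ for some $P_i\in\mathcal{C}$ and some smooth map $F_i$ between domains. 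Then $g\circ Q_i=(g\circ P_i)\circ F_i$ is a plot by D2. Post-composition with $g$ preserves compatibility and commutes with taking suprema, since both are pointwise conditions on the union of the domains; hence $g\circ Q$ is the supremum of the compatible family $\{g\circ Q_i\}$ of plots, and so is itself a plot by D3. This step is the technical heart of the whole argument, and the one I expect to demand the most care: the delicate point is the domain bookkeeping needed to verify that the supremum of $\{g\circ Q_i\}$ is genuinely $g\circ Q$, i.e. that $g\circ Q$ restricts on each $\mathrm{dom}(Q_i)$ to $g\circ Q_i$.

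With this lemma the first equivalence is quick. If $f$ is smooth then $f\circ\pi$ is a composite of smooth maps (subductions being smooth), hence smooth. Conversely, suppose $f\circ\pi$ is smooth. Because $\pi$ is a subduction, the family $\mathcal{C}=\{\pi\circ P\mid P\text{ a plot in }X'\}$ is a covering generating family for $X$, and for each generator we have $f\circ(\pi\circ P)=(f\circ\pi)\circ P$, which is a plot. Applying the lemma to $g=f$ and this $\mathcal{C}$ yields that $f$ is smooth.

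For the second equivalence I would compare the two families $\mathcal{A}=\{(f\circ\pi)\circ P\mid P\text{ a plot in }X'\}$ and $\mathcal{B}=\{f\circ R\mid R\text{ a plot in }X\}$, noting that $\mathcal{A}\subseteq\mathcal{B}$ (as $\pi\circ P$ is a plot in $X$) and that $\mathcal{C}\mapsto\langle\mathcal{C}\rangle$ is monotone. Assume first that $f$ is a subduction; then $f\circ\pi$ is smooth by the previous paragraph, its image family $\mathcal{A}$ covers $Y$ (from surjectivity of $f$ and $\pi$ together with D1), and it remains to show $\langle\mathcal{A}\rangle=\mathcal{D}_Y$. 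The inclusion $\mathcal{A}\subseteq\mathcal{B}$ points the wrong way for this, so the real work is the reverse inclusion $\mathcal{B}\subseteq\langle\mathcal{A}\rangle$: given a plot $R$ in $X$, the subduction $\pi$ lets me write $R$ as the supremum of a compatible family of parametrizations $\pi\circ P'_j$ with $P'_j$ a plot in $X'$, and then the supremum-commutation from the lemma shows $f\circ R$ is the supremum of the compatible family $\{(f\circ\pi)\circ P'_j\}\subseteq\mathcal{A}$, so $f\circ R\in\langle\mathcal{A}\rangle$. Since $\langle\mathcal{B}\rangle=\mathcal{D}_Y$ (as $f$ is a subduction), the chain $\mathcal{D}_Y=\langle\mathcal{B}\rangle\subseteq\langle\mathcal{A}\rangle\subseteq\mathcal{D}_Y$ forces $\langle\mathcal{A}\rangle=\mathcal{D}_Y$, so $f\circ\pi$ is a subduction. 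Conversely, if $f\circ\pi$ is a subduction then $\langle\mathcal{A}\rangle=\mathcal{D}_Y$; monotonicity with $\mathcal{A}\subseteq\mathcal{B}\subseteq\mathcal{D}_Y$ gives $\langle\mathcal{B}\rangle=\mathcal{D}_Y$, while $f$ is smooth by the first equivalence and surjective because $f\circ\pi$ is, so $\mathcal{B}$ is a covering generating family and $f$ is a subduction.
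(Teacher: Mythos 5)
Your proof is correct, but there is nothing in the paper to compare it against: the paper does not prove this proposition at all, it simply quotes it from \cite[\S 1.51]{PIZ2013}. So your argument is a genuine self-contained replacement for the citation. Your route --- first the lemma that smoothness can be tested on a single covering generating family (using that post-composition preserves compatibility and commutes with suprema, so D2 and D3 finish the job), then deducing both equivalences from the closure operator $\mathcal{C}\mapsto\langle\mathcal{C}\rangle$ --- is essentially the standard textbook argument, transcribed to this paper's supremum-based definition of the generated diffeology, and all the steps check out. Two facts you use implicitly are worth one sentence each if this were written out: (i) the family $\mathcal{C}_\pi=\{\pi\circ P\mid P \mbox{ a plot in } X'\}$ is already closed under precomposition with smooth maps between domains, since $(\pi\circ P)\circ F=\pi\circ(P\circ F)$ and $P\circ F$ is again a plot in $X'$ by D2; this is exactly what entitles you to write the compatible family witnessing $R\in\langle\mathcal{C}_\pi\rangle$ as consisting of parametrizations of the form $\pi\circ P'_j$ rather than $(\pi\circ P'_j)\circ F_j$. (ii) The chain $\mathcal{D}_Y=\langle\mathcal{B}\rangle\subseteq\langle\mathcal{A}\rangle\subseteq\mathcal{D}_Y$ uses, besides monotonicity, the idempotency $\langle\langle\mathcal{A}\rangle\rangle=\langle\mathcal{A}\rangle$, equivalently that $\langle\mathcal{A}\rangle$ is itself a diffeology; the paper never records this, so it should be stated (its verification is routine from D2 and D3). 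Neither point is a gap; both are immediate, and the proof stands.
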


\begin{definition}
	Let $ X $ be a diffeological space.
	If $ R $ is an equivalence relation on $ X $ and $ q:X\rightarrow X/R $ is the quotient map, the diffeology generated by the family 
	$ \lbrace q\circ P\mid P $ is a plot in $ X\rbrace $ 
	is called the \textbf{quotient diffeology} on $ X/R $ and $ X/R $ is the quotient space.
	In this situation, the quotient map $ q:X\rightarrow X/R $ is a subduction.
\end{definition}

\begin{definition}
	Let $ X $ be a diffeological space. A \textbf{diffeological subspace} of $ X $ is a subset $ X'\subseteq X $ equipped with the \textbf{subspace diffeology}, which is the set of all plots in $ X $ with values in $ X' $.
\end{definition}
\begin{definition}
	The \textbf{product diffeology} on
	$ X=\prod_{i\in J} X_i$ is given by the parametrizations $ P $ in $ X $ for which
	$ \pi_i\circ P$ is a plot in $X_i$ for all $ i\in J $, where
	$ \pi_i:X\rightarrow X_i$ is the natural projection.
	If $ J=\lbrace1,\ldots,n\rbrace $, then the plots in the product $X=X_1\times\dots\times X_n $ are
	$ n $-tuples $ (P_1,\ldots,P_n) $ where each $ P_i $ is a plot in $ X_i $.
\end{definition}

\begin{definition}
	Let $ \lbrace X_i\rbrace_{i\in J} $ be a family of diffeological spaces. 
	The \textbf{sum diffeology} on the direct sum $ X=\bigsqcup_{i\in J} X_i $ is given by following
	property: A parametrization $ P:U\rightarrow X $ is a plot if there exists a partition $ \lbrace U_i\rbrace_{i\in J} $ of $ U $ such that $ P_i=P|_{U_i} $ is a plot in $ X_i $.
\end{definition}

\begin{definition}
	Let $ X $ be a diffeological space and let $ \mathcal{C} $ be a parametrized cover of $ X $.
	The sum space $ \mathsf{Nebula}(\mathcal{C})=\bigsqcup_{P\in \mathcal{C}} \mathrm{dom}(P) $ is called the \textbf{nebula} of the parametrized cover $ \mathcal{C} $.
\end{definition}

\begin{definition}
	Every diffeological space $ X $ has a natural topology called the D-\textbf{topology} in which a subset of $X$ is D-\textbf{open} if its preimage by any plot is open. 
	We denote a diffeological space $ X $ endowed with the D-topology by $ D(X) $.
\end{definition}

Any smooth map is D-continuous, that is, continuous with respect to the D-topology \cite[\S 2.9]{PIZ2013}

\subsection{Algebraic diffeological objects}
\begin{definition}
	A \textbf{diffeological monoid} is a monoid along with a \textbf{monoid diffeology} such that the multiplication is
	smooth.
\end{definition}

\begin{definition}
	A \textbf{diffeological group} is a group equipped with a \textbf{group diffeology} such that the multiplication and the inversion are
	smooth.
\end{definition}

\begin{example}
	Let $ X $ be a diffeological space. Let $ \mathrm{Diff}(X) $ denote the group of diffeomorphisms on $ X $ with the composition operation.
	A parametrization $ P : U \rightarrow\mathrm{Diff}(X) $ is a plot for the \textbf{standard
		diffeology} of group of diffeomorphisms if $ P $ and $ r\mapsto P(r)^{-1} $ are plots for functional
	diffeology (see \cite[1.61]{PIZ2013}). Then  $ \mathrm{Diff}(X) $  is a diffeological group thanks to the smoothness of the composition.
\end{example}

\begin{Def}
	A \textbf{smooth action} of a diffeological group $ G $ on a diffeological space $ X $ is a smooth
	homomorphism $ G \rightarrow\mathrm{Diff}(X) $.
\end{Def}

\begin{definition}
	A \textbf{diffeological vector space} is a vector space equipped with a so-called \textbf{vector space diffeology} such that the addition and the scalar multiplication are smooth.
\end{definition}

\begin{definition}\cite{CW}
	Let $ X $ be a diffeological space. A \textbf{diffeological vector space over $ X $}  is a smooth map $ f:V\rightarrow X $ and a real vector space structure on each of the fiber $ V_x:=f^{-1}(x) $
	such that the addition map $ V\times_XV\rightarrow V $, the scalar multiplication map $ \mathbb{R}\times V\rightarrow V $, and the zero section $ X\rightarrow V $ are smooth.
\end{definition}

\begin{definition}(\cite{ADD})
	A \textbf{diffeological affine space} is
	a triple
	$ (E,\overrightarrow{E},+) $
	consisting of a non-empty diffeological space $ E $, a diffeological vector space $ \overrightarrow{E} $  and a smooth action
	$ +:E\times\overrightarrow{E}\rightarrow E $, such that
	for any two points $ a, b \in E $, there is a unique $ u\in\overrightarrow{E} $ such that
	$ a + u = b $.
\end{definition}

\subsection{Diffeological fiber bundles}

\paragraph{\textbf{Smooth projections.}}
Any surjective smooth map $ \pi:E\rightarrow X $ is called a \textit{smooth projection}.
%Diffeological spaces $ E $ and $ X $ are called the \textit{total space} and the \textit{base space} of $ \pi $, respectively.
For any $ x\in X $, $ \pi^{-1}(x) $  denoted by $ E_x $ is the \textit{fiber} of $ \pi $ over $x$. 
A morphism between smooth projections $ \pi:E\rightarrow X $ and $ \pi':E' \rightarrow X' $ is a pair of smooth maps $ \phi:E\rightarrow E' $ and $ \Pr(\phi):X\rightarrow X' $ such that the following diagram commutes. 
\begin{displaymath}
	\xymatrix{
		E \ar[r]^{\phi}\ar[d]_{\pi} & E' \ar[d]^{\pi'} \\
		X \ar[r]_{\Pr(\phi)} & X'  }
\end{displaymath}
Smooth projections and the morphisms between them make up a category.
% called the \textit{category of smooth projections}. 
%A morphism $ (\phi,\Pr(\phi)) $ from $ \pi $ to $ \pi' $ is an isomorphism, if $ \phi $ and $ \Pr(\phi) $ are diffeomorphism. In this situation, it is said that $\pi$ and $\pi'$ are \textit{equivalent}. 
Smooth projections $\pi$ and $\pi'$ with the same base $ X $ are 
$ X $-equivalent if there is an isomorphism 
$ (\phi,\mathrm{id}_X) $ between them.

\paragraph{\textbf{Pullbacks.}}
Pullbacks in the category $ \mathsf{Diff} $ of diffeological spaces exist.
This construction can be concretely made as follows.
Let $ f:X\rightarrow Y $ and $ g:Z\rightarrow Y $ be two smooth maps.
The \textbf{pullback} of $ f $ by $ g $ denoted by $ g^*f:g^*X\rightarrow Z $ is a smooth map defined on 
\begin{center}
	$ g^*X=\lbrace(z,x)\in Z\times X\mid g(z)=f(x)\rbrace $
\end{center}
as a subspace of $ Z\times X $, taking $ (z,x) $ to $ z $, that is, $ g^*f $ is the restriction of the projection $ \Pr_1:Z\times X\rightarrow Z $ to $ g^*X $.
This 
gives rise to a natural morphism $ (g_{\#},g) $
\begin{displaymath}
	\xymatrix{
		g^*X \ar[r]^{g_{\#}}\ar[d]_{g^*f} & X\ar[d]^{f} \\
		Z \ar[r]^{g} & Y  }
\end{displaymath}
from $ g^*f $ to $ f $ in the category $ \mathrm{Mor}(\mathsf{Diff}) $ of morphisms of $ \mathsf{Diff} $ (see, e.g., \cite{M}), where $ g_{\#}=\Pr_2|_{g^*X} $ and $ \Pr_2:Z\times X\rightarrow X $ is the projection on the second factor.
It is clear that for each $ z\in Z $, the fiber of $ g^*f $ over $ z $ is diffeomorphic to the fiber of $ f $ over $ g(z) $.
Symmetrically, $ g_{\#} $ plays the role of the pullback of $ g $ by $ f $.

One important property of pullbacks is transitivity.
If $ h:W\rightarrow Z $ is another smooth map, then the pullback of $ g^*f $ by $ h $, i.e., $ h^*g^*f $, is equivalent to the pullback of $ f $ by $ g\circ h $, 
i.e., $  (g\circ h)^*f $ in $ \mathrm{Mor}(\mathsf{Diff}) $, and that $ (g\circ h)_{\#}=g_{\#}\circ h_{\#} $.

If $ E $ is a diffeological $ G $-space,
then $ G $ has a natural action on $ f^*E $ defined by 
$ g_{f^*E}(x',\xi)=(x',g_{E}(\xi)) $ for $ g\in G $, where $ g_{E} $ denotes the action of $ G $ on $ E $. 

\paragraph{\textbf{Local triviality.}}  
\begin{enumerate}
	\item[$ {\tiny\blacktriangleright}  $] 
	Let	$ T $ be a diffeological space.
	A smooth projection $ \pi:E\rightarrow X $ is \textbf{trivial} with respect to the fiber $ T $ if $ \pi $ is $ X $-equivalent to the first factor projection $ \Pr_1: X \times T\rightarrow X $.
	A smooth projection $ \pi:E\rightarrow X $ is \textbf{locally trivial} with respect to the fiber $ T $ if there exists a D-open cover $ \lbrace U_i\rbrace_{i\in J} $ of $ X $ such that 
	the pullback $ \imath_i^*\pi $ by the inclusion $ \imath_i:U_i\hookrightarrow X $ is trivial with respect to $ T $, for every $ i\in J $.
	
	\item[$ {\tiny\blacktriangleright}  $] 
	Let a diffeological group $ G $ act smoothly on a diffeological space $ E $.
	A smooth projection $ \pi:E\rightarrow X $  is a \textbf{trivial} principal $ G $-bundle
	if there exists a $ G $-equivariant diffeomorphism $ \phi:E\rightarrow X\times G $ with $ \Pr_1\circ\phi=\pi $.
	A smooth projection $ \pi:E\rightarrow X $ is
	\textbf{locally trivial} principal $ G $-bundle if there exists a D-open cover
	$ \lbrace U_i\rbrace_{i\in J} $ of $ X $ such that the pullback $ \imath_i^*\pi $ by the inclusion $ \imath_i:U_i\hookrightarrow X $ is a trivial principal $ G $-bundle for each $ i $.   
	
	\item[$ {\tiny\blacktriangleright}  $] 
	Let $ V $ be a diffeological vector space. A diffeological vector space $ \pi:E\rightarrow X $
	over the space $ X $ is \textbf{trivial} of fiber type $ V $ if there exists a diffeomorphism $ \phi:E\rightarrow X\times V $
	with $ \Pr_1\circ\phi=\pi $, such that for every $ x\in X $, the restriction $ \phi|_{\pi^{-1}(x) }:\pi^{-1}(x) \rightarrow V $ is an isomorphism of
	diffeological vector spaces.
	A diffeological vector space $ \pi:E\rightarrow X $ over the space $ X $ is \textbf{locally trivial} of fiber type $ V $
	if there exists a D-open cover 	$ \lbrace U_i\rbrace_{i\in J} $ of $ X $  such that for each $ i $, the pullback $ \imath_i^*\pi $ by the inclusion $ \imath_i:U_i\hookrightarrow X $ is trivial of fiber type $ V $.
\end{enumerate}

\begin{definition}\label{d1}
	(\cite[art. 8.9]{PIZ2013}).
	A \textbf{diffeological fiber bundle} of fiber type $ T $ is a smooth projection $ \pi:E\rightarrow X $ locally trivial along the plots in $ X $, that is,  the pullback of $ \pi $ by every plot in $ X $ is 
	locally trivial with the fiber $ T $.  
\end{definition}

\begin{definition}
	A \textbf{morphism between diffeological fiber bundles} $ \pi:E\rightarrow X $ and $ \pi':E'\rightarrow X' $ 
	is a fiber preserving smooth map $ \phi:E\rightarrow E' $, that is, if 
	$ \pi(\xi_1)=\pi(\xi_2) $, then $ \pi'\circ\phi(\xi_1)=\pi'\circ\phi(\xi_2) $ for every
	$ \xi_1,\xi_2\in E $.
	
\end{definition}
In this situation, $ \phi $ induces a smooth map $ \Pr(\phi):X\rightarrow X' $ between base spaces so that $ (\phi,\Pr(\phi)) $ is a morphism of smooth projections. 
Denoted by $ \mathrm{Bund}_{fiber}(X,T) $ the set of isomorphism classe of diffeological fiber bundles on $ X $ with typical fiber $ T $.

\begin{definition}
	Let a diffeological group $ G $ act smoothly on a diffeological space $ E $.
	A \textbf{diffeological principal $ G $-bundle} is a smooth projection $ \pi:E\rightarrow X $ such that the pullback of $ \pi $ along every plot in $ X $ is a locally trivial $ G $-bundle.
\end{definition}

\begin{definition}
	A morphism between diffeological principal $ G $-bundles $ \pi:E\rightarrow X $ and $ \pi':E'\rightarrow X' $ is
	a $ G $-equivariant smooth map $ \phi:E\rightarrow E' $.
	Two principal $ G $-bundles $ \pi:E\rightarrow X $ and $ \pi':E'\rightarrow X' $  are isomorphic if there exists a $ G $-equivariant diffeomorphism $ \phi:E\rightarrow E' $ such that $ \pi'\circ\phi=\pi $.
\end{definition}

\begin{definition}
	Let $ V $ be a diffeological vector space. 
	A \textbf{diffeological vector bundle} of fiber type $ V $ is a diffeological vector space $ \pi:E\rightarrow X $ over $ X $ such that the pullback of $ \pi $ along every plot in $ X $ is a locally trivial of fiber type $ V $.
\end{definition}

\begin{definition}
	A morphism between diffeological vector bundles $ \pi:E\rightarrow X $ and $ \pi':E'\rightarrow X' $ is
	a fiber preserving smooth map $ \phi:E\rightarrow E' $ such that $ \phi|_{}:E\rightarrow E' $.
	%Two principal $ G $-bundles $ \pi:E\rightarrow X $ and $ \pi':E'\rightarrow X' $  are isomorphic if there exists a $ G $-equivariant diffeomorphism $ \phi:E\rightarrow E' $ such that $ \pi'\circ\phi=\pi $.
\end{definition}

\begin{proposition}\label{P1}
	Let $ X $ be a diffeological space and $ \mathcal{C}\in\mathsf{CGF}(X) $. Assume that $ \pi:E\rightarrow X $ is a smooth projection (resp. smooth projection from a diffeological $ G $-space $ E $, diffeological vector space over $ X $).
	Then $ \pi $ is a diffeological fiber bundle of fiber type $ T $ (resp. diffeological principal $ G $-bundle, diffeological vector bundle of fiber type $ V $) if and only if the pullback of $ \pi $ by every plot $ P $ belonging to $ \mathcal{C} $ is 
	locally trivial of fiber type $ T $ (resp. locally trivial principal $ G $-bundle, locally trivial of fiber type $ V $).  
\end{proposition}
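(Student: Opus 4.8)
The backward implications are the substantive ones; each forward implication is immediate, since every element of $\mathcal{C}$ is itself a plot in $X$ (because $\mathcal{C}\subseteq\langle\mathcal{C}\rangle=\mathcal{D}$), so the defining condition of a diffeological fiber bundle (Definition \ref{d1}), principal bundle, or vector bundle simply specializes to the stated condition on $\mathcal{C}$. I would treat the three cases in parallel, writing ``locally trivial'' in the appropriate sense (plain, $G$-equivariant, or fiberwise linear). The plan is to show that if the pullback of $\pi$ along every $Q\in\mathcal{C}$ is locally trivial, then the pullback of $\pi$ along an \emph{arbitrary} plot $P\colon U\to X$ is locally trivial, which is exactly the bundle condition of Definition \ref{d1}.

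First I would unwind what it means for $P$ to lie in $\mathcal{D}=\langle\mathcal{C}\rangle$. By the definition of the generated diffeology, $P$ is the supremum of a compatible family $\{P_i\colon U_i\to X\}_{i\in J}$ with each $P_i\in\lfloor\mathcal{C}\rfloor$; in particular $\{U_i\}_{i\in J}$ is an open cover of $U$, and $P|_{U_i}=P_i=Q_i\circ F_i$ for some $Q_i\in\mathcal{C}$ and some smooth map $F_i\colon U_i\to\mathrm{dom}(Q_i)$ between domains. Since each $U_i$ is open in $U$ and local triviality over the base $U$ may be verified on any open cover of $U$, it suffices to establish local triviality of $(P|_{U_i})^*\pi$ over each $U_i$ separately. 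One checks directly from the concrete description of the pullback that the restriction of $P^*\pi$ over the open set $U_i$ coincides with $(P|_{U_i})^*\pi$.

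Next I would invoke the transitivity of pullbacks recorded above: from $P|_{U_i}=Q_i\circ F_i$ one gets that $(P|_{U_i})^*\pi=(Q_i\circ F_i)^*\pi$ is equivalent in $\mathrm{Mor}(\mathsf{Diff})$ to $F_i^*\bigl(Q_i^*\pi\bigr)$, an equivalence over the common base $U_i$. By hypothesis $Q_i^*\pi$ is locally trivial, so there is a D-open cover $\{W_\alpha\}$ of $\mathrm{dom}(Q_i)$ over each member of which $Q_i^*\pi$ is trivial. Since $F_i$ is smooth, hence D-continuous, $\{F_i^{-1}(W_\alpha)\}$ is an open cover of $U_i$, and over each $F_i^{-1}(W_\alpha)$ the projection $F_i^*(Q_i^*\pi)$ is a pullback of a trivial projection, hence trivial. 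Thus $F_i^*(Q_i^*\pi)$, and therefore $(P|_{U_i})^*\pi$, is locally trivial; assembling over $i\in J$ shows that $P^*\pi$ is locally trivial, as required.

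The only point needing care, and the main obstacle, is that in the principal and vector cases the triviality enjoyed by $Q_i^*\pi$ must be transported through both the equivalence and the pullback while preserving the extra structure. Here I would use that the natural $G$-action on a pullback described above is compatible with the pullback morphism, so that a $G$-equivariant trivialization of $Q_i^*\pi$ pulls back to a $G$-equivariant trivialization of $F_i^*(Q_i^*\pi)$; likewise the fiberwise linear structure together with the smoothness of addition, scalar multiplication, and the zero section is preserved under pullback, so a fiberwise linear trivialization pulls back to one. Granting these structure-preservation checks, the identical three-step argument (generation, transitivity, and ``a pullback of a trivial bundle is trivial'') yields all three equivalences at once.
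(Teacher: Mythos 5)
Your proof is correct, and it follows exactly the route the paper has in mind: the paper dismisses this proposition with ``The proof is straightforward,'' and your argument (decompose an arbitrary plot via $\langle\mathcal{C}\rangle$ into restrictions of the form $Q_i\circ F_i$, use transitivity of pullbacks, and observe that a pullback of a trivial projection is trivial, with the $G$-equivariant and fiberwise-linear structures preserved under pullback) is precisely the standard filling-in of those details. No gaps: the points you flag as needing care (restriction of a pullback over an open subset, and structure preservation in the principal and vector cases) are handled correctly by the natural $G$-action on pullbacks and the fiberwise structure already recorded in the paper's ``Pullbacks'' paragraph.
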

\begin{proof}
	The proof is straightforward.
	%	The necessity is clear. 	Assume that $ Q $ is an arbitrary plot in $ X $. Then $ Q $ is the supremum of a compatible family	$ \lbrace Q_i\rbrace_{i\in J} $ with $ Q_i=P_i\circ F_i $ for some $ P_i $ belonging to $ \mathcal{C} $ and some smooth map $ F_i $ between domains. We can write	$ \imath_i^*(Q^*(\pi)) =Q_i^*(\pi)=F_i^*(P_i^*(\pi)) $, where $ \imath_i:\mathrm{dom}(Q_i)\hookrightarrow\mathrm{dom}(Q) $ is the canonical inclusion. By Proposition \ref{P4}, $ \imath_i^*(Q^*(\pi)) $ is locally trivial of fiber type $ T $ (resp. locally trivial principal $ G $-bundle, locally trivial of fiber type $ V $)  and hence $ Q^*(\pi) $ is locally trivial of fiber type $ T $ (resp. locally trivial principal $ G $-bundle, locally trivial of fiber type $ V $).
\end{proof}

\section{Sheaves on diffeological spaces}\label{S3}
In this section, we give a brief overview of sheaves on diffeological spaces and related constructions from  \cite{DA2018,DA}.
\paragraph{\textbf{Site of plots.}}
The category of the plots in a diffeological space $ X $, denoted by $ \mathsf{Plots}(X) $,
has the plots in $ X $ for objects and 
a morphism $ Q\stackrel{F}{\longrightarrow} P $ 
between two plots $ P:U\rightarrow X $ and $ Q:V\rightarrow X $ is a commutative triangle
$$
\xymatrix{
	& X  & \\
	V \ar[ur]^{Q} \ar[rr]_{F} & & U \ar[ul]_{P}
}
$$
where $ F $ is a smooth map between domains.
If $P':U'\rightarrow X$ is a restriction of $ P:U\rightarrow X $, that is, $ U'\subseteq U $ and $ P|_{U'}=P' $,  
one has the inclusion morphism $P'\stackrel{\imath}{\longrightarrow} P$ given by the inclusion $\imath:U'\hookrightarrow U$.
In particular, for a compatible family 
$\lbrace P_i\rbrace_{i\in J}$ 
of plots with the supremum $P$, there are inclusion morphisms $P_i\stackrel{\imath_i}{\longrightarrow} P$.
Endow the category of the plots in a diffeological space $ X $ with the Grothendieck pretopology in which 
a covering for a plot $ P $ is a compatible family of plots with the supremum $ P $.
This site is called the \textbf{site of plots} in $ X $ and denoted by $ X_{\mathsf{Plots}} $.

\begin{definition}
	A \textbf{presheaf} $ A $ of abelian groups on a diffeological space $ X $ is a functor
	$ A:\mathsf{Plots}(X)^{op}\rightarrow\mathsf{Ab}$.
	We denote the corresponding map to $Q\stackrel{F}{\longrightarrow}P$ by $F^*:A(P)\rightarrow A(Q)$ and call it the \textbf{pullback} by $ F $.
	If $P'\stackrel{\imath}{\longrightarrow} P$ is an inclusion morphism, we denote by $s\upharpoonright{P'}$, the pullback of $ s $ by $ \imath $, for every $s\in A(P)$. 
\end{definition}
\begin{definition}
	We denote the limit of a presheaf $ A $ on a diffeological space $X$  by $\Sigma A(X)$ and call it the \textbf{sections} of $A$.
\end{definition}
\begin{definition}
	A \textbf{sheaf} $A$ of abelian groups on diffeological space $X$ is a sheaf on the site $ \mathsf{Plots}(X) $,
	meaning that the sequence
	\begin{equation*}
		A(P)\stackrel{}{\longrightarrow} \prod_{i\in J}A(P_i)\stackrel{}{\longrightarrow}\displaystyle\prod_{(i,j)\in J\times J}A(P_i\times_PP_j)
	\end{equation*}
	is exact, for any plot $ P $ and compatible family $\lbrace P_i\rbrace_{i\in J}$ of plots with the supremum $P$.
\end{definition}
\begin{example}\label{exa-shv-g}
	Let $ X $ be a diffeological space and $ G $ be a diffeological group.
	Assignment to each plot $ P $ in $ X $ the group $ G(P) $ of $ G $-valued smooth functions defined on $ \mathrm{dom}(P) $ and to each morphism $ Q\stackrel{F}{\longrightarrow} P $, the pullback homomorphism $ F^*:G(P)\rightarrow G(Q) $ taking $ f $ to $ f\circ F $, defines a sheaf on $ X $.
\end{example}

\begin{theorem}
	For a presheaf $ A $ on a diffeological space $X$,
	consider the presheaf $\Sigma A$ on the D-topological space $X$ by assigning 
	the set $\Sigma A|_U(U)$ of sections of $A|_U$ to each D-open subspace $U$ of $X$
	along with the restrictions of sections. 
	If $A$ is a sheaf on  $X$, then $\Sigma A$ is a sheaf on the D-topological space $X$. 
\end{theorem}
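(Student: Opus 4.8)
The plan is to verify directly the two axioms of a sheaf on the topological space $D(X)$ --- separation and gluing --- relative to an arbitrary D-open cover $U=\bigcup_{i\in J}U_i$ of a D-open subset $U$. Recall that a section in $\Sigma A|_U(U)$ is a compatible family $s=(s_P)$, indexed by the plots $P$ in $X$ with values in $U$, with $s_P\in A(P)$ and $F^*(s_P)=s_Q$ for every morphism $Q\xrightarrow{F}P$; restriction to a smaller D-open set is simply passage to the subfamily indexed by the plots landing in that set. The one geometric input is the following: for any plot $P:W\rightarrow U$, the sets $W_i:=P^{-1}(U_i)$ are open (each $U_i$ is D-open) and cover $W$, so the restrictions $P_i:=P|_{W_i}$ form a compatible family with supremum $P$, i.e.\ a covering of $P$ in the site of plots. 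Moreover one checks directly that the fibre product $P_i\times_P P_j$ is $P|_{W_i\cap W_j}$, the two projections being the inclusions $W_i\cap W_j\hookrightarrow W_i,W_j$. Thus the sheaf exactness of $A$ at $P$ says precisely: a family $(t_i)$ with $t_i\in A(P_i)$ glues to a unique $t\in A(P)$ with $t\upharpoonright P_i=t_i$ if and only if $t_i\upharpoonright P|_{W_i\cap W_j}=t_j\upharpoonright P|_{W_i\cap W_j}$ for all $i,j$.

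For separation, suppose $s,s'\in\Sigma A|_U(U)$ have the same restriction to each $U_i$. Fix a plot $P:W\rightarrow U$ and form $\{P_i\}$ as above. Since $P_i$ lands in $U_i$, compatibility of the families gives $s_P\upharpoonright P_i=s_{P_i}=s'_{P_i}=s'_P\upharpoonright P_i$ for every $i$. As $\{P_i\}$ covers $P$ and $A$ is a sheaf, the map $A(P)\rightarrow\prod_i A(P_i)$ is injective, whence $s_P=s'_P$; as $P$ was arbitrary, $s=s'$.

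For gluing, let $s^{(i)}\in\Sigma A|_{U_i}(U_i)$ agree on overlaps, $s^{(i)}|_{U_i\cap U_j}=s^{(j)}|_{U_i\cap U_j}$. Given a plot $P:W\rightarrow U$, set $t_i:=s^{(i)}_{P_i}\in A(P_i)$ and write $P_{ij}:=P|_{W_i\cap W_j}$, a plot into $U_i\cap U_j$. Compatibility of $s^{(i)},s^{(j)}$ together with the overlap hypothesis yields $t_i\upharpoonright P_{ij}=s^{(i)}_{P_{ij}}=s^{(j)}_{P_{ij}}=t_j\upharpoonright P_{ij}$, so by the sheaf property there is a unique $s_P\in A(P)$ with $s_P\upharpoonright P_i=t_i$ for all $i$. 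If $P$ already lands in $U_i$, then $W_i=W$ and $P_i=P$, so $s_P=t_i=s^{(i)}_P$; hence the family $(s_P)$ restricts to $s^{(i)}$ over each $U_i$.

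The delicate point, and the step I expect to be the main obstacle, is to show that $(s_P)$ is genuinely a compatible family, i.e.\ $F^*(s_P)=s_Q$ for every morphism $Q\xrightarrow{F}P$ with $P,Q$ into $U$ --- the gluing construction only controls the covering inclusions, so a second application of the sheaf property is needed. Pulling the cover back, put $V_k:=F^{-1}(W_k)=Q^{-1}(U_k)$ and $F_k:=F|_{V_k}:V_k\rightarrow W_k$, so that $Q_k:=Q|_{V_k}=P_k\circ F_k$. Using functoriality of pullbacks one computes, for each $k$,
\[
(F^*s_P)\upharpoonright Q_k=F_k^*\bigl(s_P\upharpoonright P_k\bigr)=F_k^*\bigl(s^{(k)}_{P_k}\bigr)=s^{(k)}_{Q_k}=s_Q\upharpoonright Q_k,
\]
where the third equality uses that $s^{(k)}$ is a section over $U_k$ and the last is the defining property of $s_Q$. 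Since $\{Q_k\}$ covers $Q$, injectivity of $A(Q)\rightarrow\prod_k A(Q_k)$ forces $F^*(s_P)=s_Q$. Therefore $s:=(s_P)$ is a well-defined section in $\Sigma A|_U(U)$ restricting to each $s^{(i)}$, and it is unique by the separation already proved. Finally, the sections over $\varnothing$ reduce to $A$ of the empty plot, which is terminal by the sheaf axiom applied to the empty covering, so $\Sigma A(\varnothing)$ is trivial. Hence $\Sigma A$ is a sheaf on $D(X)$.
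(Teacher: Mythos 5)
Your proof is correct, and every step checks out: the observation that for a plot $P$ the preimages $W_i=P^{-1}(U_i)$ of D-open sets are open, so that the restrictions $P|_{W_i}$ form a compatible family with supremum $P$ (hence a covering in the site of plots); the identification $P_i\times_P P_j=P|_{W_i\cap W_j}$, which is exactly how the paper itself treats such fibre products in its Appendix; and, crucially, the second application of the sheaf axiom (injectivity of $A(Q)\rightarrow\prod_k A(Q_k)$) to establish that the glued family $(s_P)$ is compatible under arbitrary morphisms $Q\xrightarrow{F}P$, not just the covering inclusions. That last step is indeed the only non-routine point, and your computation $(F^*s_P)\upharpoonright Q_k=F_k^*(s_P\upharpoonright P_k)=F_k^*(s^{(k)}_{P_k})=s^{(k)}_{Q_k}=s_Q\upharpoonright Q_k$ handles it cleanly via functoriality. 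Note, however, that there is nothing in the paper to compare your argument against: the theorem is stated without proof and is imported from the authors' earlier work (cited in the Appendix as \cite[Theorem 4.2]{DA}). So your write-up serves as a self-contained verification of a result the paper takes as given, and it follows what is surely the intended route --- translating D-open covers into plot coverings and invoking the exactness defining sheaves on the site of plots twice, once for existence/uniqueness of each $s_P$ and once for compatibility of the resulting family.
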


Let $ X $ be a diffeological space and $ \mathcal{C}\in\mathsf{CGF}(X) $. Denote by $\Sigma A(\mathcal{C})$ the limit of the restriction of a presheaf $ A $ on $ X $ to the prediffeology $\lfloor\mathcal{C}\rfloor$.
By universal property, there is a canonical morphism $ \alpha_{\mathcal{C}}:\Sigma A(X)\rightarrow\Sigma S(\mathcal{C}) $.
\begin{definition}
	Let $ X $ be a diffeological space.
	A \textbf{quasi-sheaf} on $ X $ is a presheaf $ A $ such that the canonical morphism
	$ \alpha_{\mathcal{C}}:\Sigma A(X)\rightarrow\Sigma A(\mathcal{C}) $
	is an isomorphism, for every $ \mathcal{C}\in\mathsf{CGF}(X) $.
	In other words, for every covering generating family $ \mathcal{C}$ of $ X $, the following holds:
	\begin{enumerate}
		\item[\bf{QS.}]   
		For every $ \sigma\in\Sigma A(\mathcal{C}) $ there exists a unique \textbf{extension} $ \overline{\sigma}\in\Sigma A(X) $ of $ \sigma $, i.e., $ \overline{\sigma}(P)=\sigma(P) $ for all $ P\in\mathcal{C} $.
	\end{enumerate}
	%We denote the category of quasi-sheaves on $X$ by $ \mathsf{QuasiShv}(X) $ as a full subcategory of $ \mathsf{Set}^{\mathsf{Plots}(X)^{op}}$.
\end{definition}
\begin{proposition}
	A presheaf $ A $ on a a diffeological space $ X $ is a quasi-sheaf if and only if for any covering generating family $ \mathcal{C}$ of $ X $, the sequence
	\begin{equation*}
		\Sigma A(X)\stackrel{}{\longrightarrow} \prod_{P\in\mathcal{C}}A(P)\stackrel{}{\longrightarrow}
		\displaystyle
		\prod_{P\stackrel{F}{\longleftarrow}Q\stackrel{F'}{\longrightarrow} P'}
		A(P\stackrel{F}{\longleftarrow}Q\stackrel{F'}{\longrightarrow} P')
	\end{equation*}
	is exact,
	where 
	the latter product is on morphisms $ P\stackrel{F}{\longleftarrow}Q\stackrel{F'}{\longrightarrow} P' $ of plots in $ X $ with $ P,P'\in \mathcal{C} $ and $ A(P\stackrel{F}{\longleftarrow}Q\stackrel{F'}{\longrightarrow} P') $ is equal to $ A(Q) $.
	%	For  $ \sigma\in \Sigma A(X) $, $ \alpha(\sigma)=\{\sigma(P)\}_{ P\in\mathcal{C}} $ and in the second arrows one has
	%	$ \beta_0(P_0\stackrel{F_0}{\longleftarrow}Q\stackrel{F_1}{\longrightarrow} P_1)=F_0^*(s_{P_0}) $ and 
	%	$ \beta_1(P_0\stackrel{F_0}{\longleftarrow}Q\stackrel{F_1}{\longrightarrow} P_1)=F_1^*(s_{P_1}) $ 
	%	for any $ \{s_P \}_{ P\in\mathcal{C}}\in \prod_{P\in\mathcal{C}}A(P) $.
\end{proposition}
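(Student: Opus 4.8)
The plan is to reduce both the quasi-sheaf property and the asserted exactness to a single statement about the canonical morphism $\alpha_{\mathcal{C}}$, by identifying the equalizer of the two parallel maps in the displayed sequence with the limit $\Sigma A(\mathcal{C})$. Recall that a presheaf is by definition a quasi-sheaf exactly when $\alpha_{\mathcal{C}}:\Sigma A(X)\to\Sigma A(\mathcal{C})$ is an isomorphism for every $\mathcal{C}\in\mathsf{CGF}(X)$; so it suffices to show that, for each fixed $\mathcal{C}$, exactness of the sequence is equivalent to $\alpha_{\mathcal{C}}$ being an isomorphism.

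First I would unwind what exactness means for the sequence $\Sigma A(X)\to\prod_{P\in\mathcal{C}}A(P)\to\prod A(Q)$. As in the sheaf axiom, it asserts that the first arrow $\rho$, sending a section $s$ to the family $(s(P))_{P\in\mathcal{C}}$, is injective and that its image coincides with the equalizer of the two maps; these send a family $(\sigma_P)_{P\in\mathcal{C}}$ to $(F^*\sigma_P)$ and $(F'^*\sigma_{P'})$, indexed by spans $P\xleftarrow{F}Q\xrightarrow{F'}P'$ with $P,P'\in\mathcal{C}$. Thus $(\sigma_P)$ lies in the equalizer precisely when $F^*\sigma_P=F'^*\sigma_{P'}$ for every such span.

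The key step is to build a bijection $\Phi$ between $\Sigma A(\mathcal{C})$ and this equalizer. Every $R\in\lfloor\mathcal{C}\rfloor$ has a representation $R=P\circ F$ with $P\in\mathcal{C}$, which amounts to a morphism $R\xrightarrow{F}P$; so from an equalizing family $(\sigma_P)$ I would define an extension by $\sigma(R)=F^*\sigma_P$. The main point is that this is well defined: if also $R=P'\circ F'$ with $P'\in\mathcal{C}$, then $P\xleftarrow{F}R\xrightarrow{F'}P'$ is one of the indexing spans, and the equalizer condition yields precisely $F^*\sigma_P=F'^*\sigma_{P'}$. Compatibility of $\sigma$ along an arbitrary morphism $R'\xrightarrow{G}R$ then follows from the functoriality of $A$, since $R'=P\circ(F\circ G)$ forces $\sigma(R')=(F\circ G)^*\sigma_P=G^*\sigma(R)$. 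Conversely, restricting a compatible family $\sigma\in\Sigma A(\mathcal{C})$ to the plots of $\mathcal{C}$ lands in the equalizer: for any span $P\xleftarrow{F}Q\xrightarrow{F'}P'$ the apex $Q=P\circ F$ lies in $\lfloor\mathcal{C}\rfloor$, so compatibility gives $F^*\sigma(P)=\sigma(Q)=F'^*\sigma(P')$. These two assignments are mutually inverse, so $\Phi$ is a bijection.

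Finally I would note that $\rho=\Phi\circ\alpha_{\mathcal{C}}$ by construction, both sending $s$ to $(s(P))_{P\in\mathcal{C}}$. Since $\Phi$ is a bijection onto the equalizer, $\rho$ is injective with image equal to the equalizer if and only if $\alpha_{\mathcal{C}}$ is an isomorphism. Hence the displayed sequence is exact for every $\mathcal{C}\in\mathsf{CGF}(X)$ precisely when $\alpha_{\mathcal{C}}$ is an isomorphism for every such $\mathcal{C}$, which is exactly the quasi-sheaf condition. The only genuinely delicate point is the well-definedness of the extension $\sigma(R)=F^*\sigma_P$; the remaining verifications are routine bookkeeping with the functoriality of $A$ and the description of $\lfloor\mathcal{C}\rfloor$.
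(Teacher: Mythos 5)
Your proof is correct. The paper states this proposition without any proof, treating it as a routine unwinding of the definitions, and your argument is exactly that unwinding done properly: you identify the equalizer of the two parallel maps with $\Sigma A(\mathcal{C})$ via the restriction/extension bijection $\Phi$, factor the first arrow as $\Phi\circ\alpha_{\mathcal{C}}$, and reduce exactness to $\alpha_{\mathcal{C}}$ being an isomorphism, which is the definition of quasi-sheaf. You also correctly isolate the only delicate point — that two factorizations $R=P\circ F=P'\circ F'$ with $P,P'\in\mathcal{C}$ give $F^{*}\sigma_{P}=F'^{*}\sigma_{P'}$ precisely because the span $P\stackrel{F}{\longleftarrow}R\stackrel{F'}{\longrightarrow}P'$ is one of the indexing spans of the product — so nothing essential is missing.
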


\begin{theorem}
	Every sheaf on a diffeological spaces is a quasi-sheaf.
\end{theorem}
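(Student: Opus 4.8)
The plan is to fix a covering generating family $\mathcal{C}\in\mathsf{CGF}(X)$ and prove directly that the canonical restriction map $\alpha_{\mathcal{C}}\colon\Sigma A(X)\to\Sigma A(\mathcal{C})$ is bijective; since it is visibly a homomorphism of abelian groups, bijectivity already yields the required isomorphism, i.e. property \textbf{QS}. The whole argument rests on one consequence of the hypothesis $\langle\mathcal{C}\rangle=\mathcal{D}$: every plot $P$ in $X$ is the supremum of a compatible family $\{P_i\}_{i\in J}$ with each $P_i\in\lfloor\mathcal{C}\rfloor$. For such a family the morphisms $P_i\xrightarrow{\imath_i}P$ are the domain inclusions $U_i\hookrightarrow\mathrm{dom}(P)$, their fibre products $P_i\times_PP_j$ are exactly the restrictions $P\!\upharpoonright\!(U_i\cap U_j)$, and---crucially---these restrictions again lie in $\lfloor\mathcal{C}\rfloor$ (a restriction of $P_i=Q_i\circ G_i$ is $Q_i$ precomposed with a smooth map). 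Hence the sheaf axiom applies to the covering $\{P_i\}$ of $P$ and gives exactness of $A(P)\to\prod_i A(P_i)\to\prod_{i,j}A(P_i\times_PP_j)$.

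Injectivity of $\alpha_{\mathcal{C}}$ is then immediate. If $s,s'\in\Sigma A(X)$ restrict to the same section over $\lfloor\mathcal{C}\rfloor$, I choose for an arbitrary plot $P$ a generating family $\{P_i\}\subseteq\lfloor\mathcal{C}\rfloor$ as above; since $s(P)\!\upharpoonright\!U_i=s(P_i)=s'(P_i)=s'(P)\!\upharpoonright\!U_i$ for every $i$, the injectivity of $A(P)\to\prod_iA(P_i)$ forces $s(P)=s'(P)$, and as $P$ is arbitrary, $s=s'$.

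For surjectivity I would construct an extension $\overline\sigma$ of a given $\sigma\in\Sigma A(\mathcal{C})$. For each plot $P$, pick a generating family $\{P_i\}\subseteq\lfloor\mathcal{C}\rfloor$ with supremum $P$; the elements $\sigma(P_i)$ are compatible on overlaps, because $P\!\upharpoonright\!(U_i\cap U_j)$ lies in $\lfloor\mathcal{C}\rfloor$ and $\sigma$ assigns it a single value, so that $\sigma(P_i)\!\upharpoonright\!(U_i\cap U_j)=\sigma(P\!\upharpoonright\!(U_i\cap U_j))=\sigma(P_j)\!\upharpoonright\!(U_i\cap U_j)$ via the two inclusion morphisms. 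The gluing part of the sheaf axiom then produces a unique $\overline\sigma(P)\in A(P)$ with $\overline\sigma(P)\!\upharpoonright\!U_i=\sigma(P_i)$ for all $i$. Taking the trivial cover $\{P\}$ when $P\in\lfloor\mathcal{C}\rfloor$ shows at once that $\overline\sigma$ extends $\sigma$, and uniqueness of $\overline\sigma$ follows from the injectivity already established.

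The main obstacle is to verify that $\overline\sigma$ is well defined and genuinely natural, i.e. an element of the limit $\Sigma A(X)$ rather than a mere assignment. Independence of the chosen family I would obtain by comparing two families $\{P_i\},\{P'_k\}$ through their union, which is again a compatible family in $\lfloor\mathcal{C}\rfloor$ with supremum $P$, and invoking uniqueness of gluing. Naturality---that $F^*\overline\sigma(P)=\overline\sigma(Q)$ for every morphism $Q\xrightarrow{F}P$---is the real work: I would pull the generating family $\{P_i\}$ of $P$ back along $F$ to the family $\{Q\!\upharpoonright\!F^{-1}(U_i)\}$, check it is a generating family of $Q$ inside $\lfloor\mathcal{C}\rfloor$, and compare $F^*\overline\sigma(P)$ with $\overline\sigma(Q)$ after restriction to each $F^{-1}(U_i)$, using the compatibility of $\sigma$ along the morphism $Q\!\upharpoonright\!F^{-1}(U_i)\xrightarrow{F}P_i$; the separation part of the sheaf axiom on $Q$ then upgrades this pointwise agreement to equality. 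With naturality in hand, $\overline\sigma\in\Sigma A(X)$ and $\alpha_{\mathcal{C}}(\overline\sigma)=\sigma$, completing surjectivity and hence the proof.
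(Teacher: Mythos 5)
Your proof is correct. There is nothing in the paper to compare it against: the theorem is stated in Section \ref{S3} as recalled background from \cite{DA2018,DA} and no proof is given there, so your argument genuinely fills the gap, and it does so by the natural route that the definitions invite. The two pivots on which the whole thing turns are exactly the ones you isolate: (i) a restriction of an element $P_i=Q_i\circ G_i$ of $\lfloor\mathcal{C}\rfloor$ to an open subset of its domain again lies in $\lfloor\mathcal{C}\rfloor$, so that for a compatible family $\{P_i\}\subseteq\lfloor\mathcal{C}\rfloor$ with supremum $P$ the overlaps $P_i\times_P P_j=P\upharpoonright(U_i\cap U_j)$ are simultaneously visible to the sheaf axiom for $P$ and to the naturality of $\sigma$ over $\lfloor\mathcal{C}\rfloor$; and (ii) any two compatible families in $\lfloor\mathcal{C}\rfloor$ with supremum $P$ consist of restrictions of $P$ and hence merge into a single compatible family, which gives well-definedness of $\overline\sigma(P)$ by uniqueness of gluing. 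Your naturality step is compressed but sound: for $Q\stackrel{F}{\longrightarrow}P$ one has $Q\upharpoonright F^{-1}(U_i)=Q_i\circ\bigl(G_i\circ F|_{F^{-1}(U_i)}\bigr)\in\lfloor\mathcal{C}\rfloor$, the square $\imath_i\circ F|_{F^{-1}(U_i)}=F\circ\imath$ commutes in $\mathsf{Plots}(X)$, and the separation half of the sheaf axiom for the covering $\{Q\upharpoonright F^{-1}(U_i)\}$ of $Q$ upgrades the resulting local agreement to $F^*\overline\sigma(P)=\overline\sigma(Q)$. Note also that what you prove is agreement of $\overline\sigma$ with $\sigma$ on all of $\lfloor\mathcal{C}\rfloor$, not merely on $\mathcal{C}$ as in the paper's formulation of property QS; this stronger statement is the one actually needed for $\alpha_{\mathcal{C}}$ to be an isomorphism, and the two are in any case equivalent by naturality of $\sigma$, so your proof establishes the definition of quasi-sheaf exactly as stated.
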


\subsection{A framework for cohomology theories based on presheaves}\label{S33}
Here, we discuss a general framework based on presheaves
to exhibit some cohomology theories of diffeological spaces (see \cite{DA2018}).

\begin{definition}
	
	A \textbf{cochain complex} $(A^{\bullet},d)$ of presheaves of abelian groups on a diffeological space $X$
	is a sequence of presheaves on $X$ and morphisms
	\begin{center}
		$\cdots\longrightarrow A^{k-1}\stackrel{d}{\longrightarrow}A^{k}\stackrel{d}{\longrightarrow}A^{k+1}\longrightarrow\cdots$
	\end{center}
	such that the composition of any two consecutive morphisms is zero, i.e., $d\circ d=0$.
	The morphism $d^{k}:A^{k}\rightarrow A^{k+1}$ is called the \textbf{$k$th coboundary operator}.
\end{definition}

In this situation, we obtain cochain complexes $(A^{\bullet}(P),d_P)$ of abelian groups for all plots $P$ in $ X $.
Because coboundary operators are natural transformation, 
a morphism  $ Q\stackrel{F}{\longrightarrow} P $ of plots induces a well-defined homomorphism
$ (A^{\bullet}(Q),d_Q)\rightarrow(A^{\bullet}(P),d_P) $ of cochain complexes.
Therefore, the assignment 
\begin{align*}
	P & \quad\rightsquigarrow\quad H^k(A^{\bullet}(P)) \\
	Q\stackrel{F}{\longrightarrow} P & \quad\rightsquigarrow\quad H^k(A^{\bullet}(P))\stackrel{F^{*}}{\longrightarrow}  H^k(A^{\bullet}(Q))  
\end{align*}
in which  $ H^k(A^{\bullet}(P)) $ is the $k$th cohomology group of the cochain complex $(A^{\bullet}(P),d_P)$,
defines a presheaf, denoted by $ \textbf{H}^k(A^{\bullet}) $ and called the \textbf{$k$th cohomology presheaf} of the cochain complex $(A^{\bullet},d)$, 

Moreover, the cochain complex $(A^{\bullet},d)$ 
has an \textbf{associated cochain complex} $(\Sigma A^{\bullet},\Sigma d)$ of groups of sections
\begin{center}
	$\cdots\longrightarrow\Sigma A^{k-1}(X)\stackrel{\Sigma d}{\longrightarrow}\Sigma A^{k}(X)\stackrel{\Sigma d}{\longrightarrow}\Sigma A^{k+1}(X)\longrightarrow\cdots $
\end{center}
whose $k$th associated cohomology group of cochain complex $(A^{\bullet},d)$ is denoted by $H^k(\Sigma  A^{\bullet})$.

By the universal property of limits, there exists a homomorphism 
\begin{center}
	$ \varrho_A:H^k(\Sigma  A^{\bullet}) \longrightarrow\Sigma \textbf{H}^k(A^{\bullet}) $
\end{center}
given by $ \varrho_A([\sigma])(P)= [\sigma(P)]$,
where $ [\sigma] $ and $ [\sigma(P)] $ are the classes of $ \sigma $ and $ \sigma(P) $ in $ H^k(\Sigma  A^{\bullet}) $ and $ \textbf{H}^k(A^{\bullet})(P) $, respectively.

\begin{definition}
	A sequence 
	\begin{center}
		$\cdots\longrightarrow A\stackrel{\Phi}{\longrightarrow}B\stackrel{\Psi}{\longrightarrow}C\longrightarrow\cdots$
	\end{center}
	of sheaves on $X$ and morphisms is \textbf{exact} if and only if for each plot $P$ in $ X $,
	\begin{center}
		$\cdots\longrightarrow A(P)\stackrel{\Phi_P}{\longrightarrow}B(P)\stackrel{\varPsi_P}{\longrightarrow}C(P)\longrightarrow\cdots$
	\end{center}
	is an exact sequence of usual sheaves.
	This means that
	\begin{enumerate}
		\item[$ {\tiny\blacktriangleright}  $] 
		for all $ a\in A(P) $, $ \Psi_P\circ\Phi_P(a)=0 $,
		\item[$ {\tiny\blacktriangleright}  $] 
		if $ \Psi_P(b)=0 $, then for every $ r\in dom(P) $, there exists an open neighborhood $ r\in V\subseteq dom(P) $, such that $ b\upharpoonright{(P|_V)}=\Phi_P(a) $ for some $ a\in A(P|_V) $.
	\end{enumerate}
\end{definition}

\begin{definition}
	A \textbf{resolution} of a sheaf $ \mathcal{Q} $ on a diffeological space $ X $ is an exact sequence of sheaves on $ X $ and morphisms
	\begin{center}
		$0\longrightarrow \mathcal{Q} \longrightarrow A^{0}\stackrel{d_0}{\longrightarrow}A^{1}\stackrel{d_1}{\longrightarrow}A^{2}\longrightarrow\cdots $.
	\end{center}
\end{definition}

% \begin{proposition}
	% 	The section functor $ \Sigma({\mathcal{C}}):\mathsf{Ab(PreShv)}(X)\rightarrow\mathsf{Ab} $ 	taking $ \phi:A\rightarrow A' $ to $ \Sigma\phi:\Sigma A(\mathcal{C})\rightarrow \Sigma A'(\mathcal{C}) $ induced by the limit,	is a left-exact additive covariant functor for any covering generating family $ \mathcal{C} $ of $ X $.
	% \end{proposition}

\subsection{De Rham cohomology}
Let $ X $ be a diffeological space and for every nonnegative integer $ k $, consider the sheaf $ \Lambda^k $ assigning to each plot $ P $ in $ X $ the abelian group $ \Lambda^k(P) $ of differential $ k $-forms on the domain of $ P $ and to each morphism $ Q\stackrel{F}{\longrightarrow} P $, the pullback homomorphism $ F^*:\Lambda^k(P)\rightarrow\Lambda^k(Q) $ of differential forms \cite[Example 3.9]{DA}.
These sheaves along with coboundary operators $ d^k:\Lambda^k\rightarrow\Lambda^{k+1} $  of the exterior derivatives
$ d^k_P:\Lambda^k(P)\rightarrow\Lambda^{k+1}(P) $
of differential forms on the domain of plots construct a cochain complex $ (\Lambda^{\bullet},d) $.

\begin{proposition}
	
	The associated cochain complex $(\Sigma\Lambda^{\bullet},\Sigma d)$ of groups of sections is the same as the cochain complex of differential forms on a diffeological space $X$. 
	Therefore, the associated cohomology 
	$H^k(\Sigma\Lambda^{\bullet}) $ coincides with the de Rham cohomology $ H^k_{dR}(X) $ of the space $X$.
\end{proposition}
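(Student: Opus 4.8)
The plan is to prove the two assertions by unwinding definitions, showing that both the groups of sections $\Sigma\Lambda^k(X)$ and the induced coboundary $\Sigma d$ coincide, term by term, with the usual de Rham complex of diffeological differential forms; the statement about cohomologies is then immediate.

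First I would recall the standard notion of a diffeological differential $k$-form on $X$ (cf.\ \cite{PIZ2013}): it is an assignment $\omega$ sending each plot $P$ in $X$ to an ordinary $k$-form $\omega(P)\in\Lambda^k(\mathrm{dom}(P))$, subject to the compatibility requirement that $\omega(P\circ F)=F^*(\omega(P))$ for every smooth map $F$ between domains. I would then unwind the definition of $\Sigma\Lambda^k(X)$ as the limit of the presheaf $\Lambda^k$ over $\mathsf{Plots}(X)^{op}$: an element is a cone, i.e.\ a family $(\sigma(P))_P$ with $\sigma(P)\in\Lambda^k(P)=\Lambda^k(\mathrm{dom}(P))$ such that $F^*(\sigma(P))=\sigma(Q)$ for every morphism $Q\stackrel{F}{\longrightarrow}P$ of the site. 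Since a morphism $Q\stackrel{F}{\longrightarrow}P$ is, by the definition of $\mathsf{Plots}(X)$, a commutative triangle $Q=P\circ F$ with $F$ a smooth map between domains, this compatibility condition is verbatim the clause defining a diffeological $k$-form. Hence $\Sigma\Lambda^k(X)$ is precisely the group of diffeological $k$-forms on $X$.

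Next I would check that the coboundary $\Sigma d$ agrees with the diffeological exterior derivative. By construction, $\Sigma d$ is the map induced on limits by the morphism $d^k:\Lambda^k\rightarrow\Lambda^{k+1}$, so $((\Sigma d)\sigma)(P)=d^k_P(\sigma(P))$ is the ordinary exterior derivative of $\sigma(P)$ on $\mathrm{dom}(P)$; this is exactly the formula $(d\omega)(P)=d(\omega(P))$ defining the diffeological exterior derivative. Moreover, the naturality of $d$ with respect to pullbacks of forms is precisely what makes $d^k$ a well-defined morphism of sheaves, so no additional verification is required. Consequently $(\Sigma\Lambda^{\bullet},\Sigma d)$ is literally the de Rham complex of $X$, and passing to cohomology yields $H^k(\Sigma\Lambda^{\bullet})\cong H^k_{dR}(X)$.

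The argument is essentially a matter of matching definitions, so there is no analytic obstacle. The one point deserving care is the bookkeeping of the limit: one must confirm that compatibility indexed over \emph{all} morphisms of $\mathsf{Plots}(X)$ --- which includes both genuine reparametrizations and the inclusion morphisms arising from restrictions --- reduces exactly to the single condition ``$\omega(P\circ F)=F^*\omega(P)$ for all smooth $F$'', with no extra constraints entering from the Grothendieck pretopology on the site. Since the limit is taken over the underlying category and is insensitive to the choice of covering families, this reduction is immediate; nonetheless I would state it explicitly so that the identification is rigorous rather than merely suggestive.
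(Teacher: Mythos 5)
Your proposal is correct and is exactly the definition-unwinding argument the paper has in mind (the paper's own proof consists only of the remark that it is straightforward): sections of the presheaf $\Lambda^k$ are compatible families $\sigma(P)\in\Lambda^k(\mathrm{dom}(P))$ with $F^*\sigma(P)=\sigma(Q)$ for every morphism $Q\stackrel{F}{\longrightarrow}P$, which is verbatim the definition of a diffeological $k$-form, and $\Sigma d$ is the plotwise exterior derivative. Your closing observation that the limit depends only on the underlying category of $\mathsf{Plots}(X)$, not on the pretopology, is a worthwhile explicit remark but introduces nothing beyond the paper's intended verification.
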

\begin{proof}
	The proof is straightforward.
\end{proof}

\begin{proposition}\label{prop-res}
	The sequence
	\begin{center}
		$0\longrightarrow\underline{\mathbb{R}}\stackrel{}{\longrightarrow}\Lambda^{0}\stackrel{d}{\longrightarrow}\Lambda^{1}\stackrel{d}{\longrightarrow}\Lambda^{2}\longrightarrow\cdots $
	\end{center}
	is a resolution of the constant sheaf $ \underline{\mathbb{R}} $ assigning to every plot $ P $ in $ X $ the group of locally constant functions on $ \mathrm{dom}(P) $ with values in $ \mathbb{R} $.
\end{proposition}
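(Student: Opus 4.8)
The plan is to unwind the definition of exactness for a sequence of sheaves on $X$ and to observe that it reduces, plot by plot, to the classical statement that the de Rham complex resolves the constant sheaf on an open subset of Euclidean space. Concretely, fix a plot $P:U\rightarrow X$ and write $U=\mathrm{dom}(P)$. Since $\Lambda^k(P)=\Omega^k(U)$ is the group of smooth $k$-forms on the open set $U\subseteq\mathbb{R}^n$, with coboundary $d$ the ordinary exterior derivative, and since $\underline{\mathbb{R}}(P)$ is the group of locally constant $\mathbb{R}$-valued functions on $U$, the sequence evaluated at $P$ is exactly
\begin{equation*}
0\longrightarrow \{\,\text{loc.\ const.\ functions on } U\,\}\longrightarrow\Omega^0(U)\stackrel{d}{\longrightarrow}\Omega^1(U)\stackrel{d}{\longrightarrow}\cdots.
\end{equation*}
According to the definition of exactness recalled above, I must verify at each spot the two conditions: that consecutive maps compose to zero, and that a cocycle is, in a neighborhood of every point $r\in U$, the image under $d$ of a form defined on that neighborhood.

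The first condition, $d\circ d=0$, is the standard identity for the exterior derivative and is inherited spotwise. For exactness at $\Lambda^0$, I note that the inclusion of locally constant functions into $\Omega^0(U)$ is injective and that $df=0$ precisely when $f$ is locally constant; hence the kernel of $d\colon\Omega^0(U)\to\Omega^1(U)$ is exactly the image of $\underline{\mathbb{R}}(P)$, giving both the injectivity of $\underline{\mathbb{R}}\to\Lambda^0$ and exactness at $\Lambda^0$ (in fact globally, before any localization). For exactness at $\Lambda^k$ with $k\ge 1$, I take a closed form $\omega\in\Omega^k(U)$, i.e.\ $d\omega=0$, together with a point $r\in U$; choosing an open ball $V$ with $r\in V\subseteq U$, the classical Poincar\'e lemma furnishes a form $\eta\in\Omega^{k-1}(V)$ with $d\eta=\omega|_V$. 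Since the restriction $\omega\upharpoonright(P|_V)$ of the section $\omega$ along the inclusion morphism $P|_V\to P$ is precisely the restriction $\omega|_V$ of the differential form, this is exactly the second exactness condition, with $a=\eta\in\Lambda^{k-1}(P|_V)$ and $\Phi_{P|_V}(a)=d\eta$.

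I expect no substantial obstacle here: the mathematical content of the statement is the Poincar\'e lemma, and the only point requiring care is the bookkeeping that translates the sheaf-theoretic notion of exactness on the site of plots into a statement about open subsets of Euclidean space. The essential observation is that each $\Lambda^k(P)$ and the operator $d$ depend only on $\mathrm{dom}(P)$, so the diffeology of $X$ enters solely through the collection of domains that occur as sources of plots; the verification is therefore identical to the one performed for ordinary manifolds. In particular, no global contractibility of $X$ is needed, because exactness is demanded only locally on each domain, where contractible balls are always available.
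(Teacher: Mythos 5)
Your proposal is correct and follows essentially the same route as the paper: the paper's proof simply observes that for each plot $P$ the evaluated sequence is an exact sequence of usual sheaves on $\mathrm{dom}(P)$, which is precisely the classical Poincar\'e lemma statement you spell out. Your write-up just makes explicit the plot-by-plot unwinding of the paper's definition of exactness (composites vanish; closed forms are locally exact on balls; kernel of $d$ on functions equals the locally constant functions), which the paper leaves implicit.
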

\begin{proof}
	Indeed, for each plot $ P $ in $ X $, the sequence 
	\begin{center}
		$0\longrightarrow\underline{\mathbb{R}}(P)\stackrel{}{\longrightarrow}\Lambda^{0}(P)\stackrel{d_P}{\longrightarrow}\Lambda^{1}(P)\stackrel{d_P}{\longrightarrow}\Lambda^{2}(P)\longrightarrow\cdots $
	\end{center}
	is an exact sequence of usual sheaves on $ \mathrm{dom}(P) $.
\end{proof}

\section{Diffeological \v{C}ech cohomology}\label{S4}

Now we introduce diffeological \v{C}ech cohomology with coefficients in a (pre)sheaf, formulated in terms of covering generating families.
Assume that $A$ is a presheaf of abelian groups on a diffeological space $ X $ and $\mathcal{C}$ is a covering generating family of $ X $.
\begin{definition}(\cite{AD}). 
	%	Let $ X $ be a diffeological space and $ \mathcal{C}\in\mathsf{CGF}(X) $.
	We define $ n $-simplices on $ \mathcal{C} $ inductively:
	\begin{enumerate}
		\item[$ (i) $]
		A $ 0 $-\textbf{simplex} on $ \mathcal{C} $ is just an element $ P_0 $ of $\mathcal{C}$.
		The nerve plot of a $0$-simplex $ P_0 $ is the plot $ P_0 $ itself, by convention.
		\item[$ (ii) $]
		A $ 1 $-\textbf{simplex} on $ \mathcal{C} $ is any diagram 
		\begin{center}
			$ P_0\stackrel{F_1}{\longleftarrow}Q\stackrel{F_0}{\longrightarrow} P_1 $ 
		\end{center}
		with $P_0, P_1\in\mathcal{C}$. The plot $ Q $ is called the nerve plot.
		Notice that $ Q $ is the nerve plot of the diagram not that of $ P_0, P_1 $.
		
		\item[$ (iii) $]
		For integers $ n\geqslant 2 $, 
		an $ n $-\textbf{simplex} $ (P_0,\ldots,P_n) $ on $ \mathcal{C} $ consists of $ n+1 $ plots $ P_0,\ldots,P_n $ belonging to $\mathcal{C}$ and a \textbf{nerve plot} $ Q $ in $ X $ such that any $ n $ plots 
		$ P_0,\ldots,\widehat{P_i},\ldots,P_n $
		(the hat indicates the omission of $ P_i $) form an $ (n-1) $-simplex with the nerve plot $ Q_i $.
		In addition, 
		for each $ i=0,\ldots,n $, there exist a morphism $ Q\stackrel{F_i}{\longrightarrow} Q_i $ commuting with the morphisms $ Q_i\stackrel{F_{i,j}}{\longrightarrow} Q_{i,j} $, for $ (n-2) $-simplices 
		\begin{center}
			$ P_0,\ldots,\widehat{P_i},\ldots,\widehat{P_j},\ldots,P_n $ 
		\end{center}
		with the nerve plots $ Q_{i,j} $; that is, 
		$ F_{i,j}\circ F_i= F_{j,i}\circ F_j $.
		
	\end{enumerate}
\end{definition}

For example,  the following commutative diagram is a $2$-simplex on $ \mathcal{C} $:
\begin{center}
	\begin{tikzpicture}
		\matrix (m) [matrix of math nodes, row sep=.8em,
		column sep=0.6em, text height=1.2ex, text depth=0.25ex]
		{ & & P_0 & & \\
			& &  & & \\
			& &  & & \\
			& Q_2  & & Q_1 & \\
			& & Q & & \\
			P_1 & & Q_0 & & P_2 \\};
		\path[->,font=\scriptsize]
		(m-4-2) edge node[auto] {$ F_{2,1} $} (m-1-3) edge node[above] {$F_{2,0}~~~~~~~$} (m-6-1)
		(m-4-4) edge node[above] {$~~~~~~~F_{1,2} $} (m-1-3) edge node[auto] {$F_{1,0} $} (m-6-5)
		(m-6-3) edge node[auto] {$ F_{0,2} $} (m-6-1) edge node[below] {$F_{0,1} $} (m-6-5)
		(m-5-3) edge node[auto]  {$ F_2 $} (m-4-2) edge node[below]  {$~~~~~F_1 $} (m-4-4) edge node[auto]  {$ F_0 $} (m-6-3);
	\end{tikzpicture} 
\end{center}

For an $n$-simplex $ (P_0,\ldots,P_n) $ on $ \mathcal{C} $ with the nerve plot $ Q $, 
let $ A(P_0,\ldots,P_n)=A(Q) $, and set 
\begin{center}
	$  C^n(X,\mathcal{C},A)= \displaystyle\prod_{n-\mathrm{simplexes}(\mathcal{C})} A(P_0,\ldots,P_n)$,
\end{center}
the group of $ n $-\textbf{cochains} with coefficients in the presheaf $ A $ subordinated to the covering generating family $\mathcal{C}$.
Define the operators 
%\begin{center}
$ \delta^n:C^n(X,\mathcal{C},A)\longrightarrow C^{n+1}(X,\mathcal{C},A)$ 
%\end{center}
for nonnegative integers $ n $ by
\begin{center}
	$ \delta^n(c)(P_0,\ldots,P_{n+1})=\displaystyle\sum_{i=0}^{n+1}(-1)^i~F^*_i c (P_0,\ldots,\widehat{P_i},\ldots,P_{n+1})$. 
\end{center}

\begin{proposition}
	
	$ \delta^{n+1}\circ \delta^n=0 $, for every nonnegative integer $ n $.
\end{proposition}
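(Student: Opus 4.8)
The plan is to mimic the classical proof that the \v{C}ech (or simplicial) coboundary squares to zero, with the role of the simplicial face identities played by the commutation relations $F_{i,j}\circ F_i=F_{j,i}\circ F_j$ built into the definition of an $n$-simplex. Fix an $(n+2)$-simplex $(P_0,\ldots,P_{n+2})$ with nerve plot $Q$, and write $Q_i$ for the nerve plot of the $i$-th face $(P_0,\ldots,\widehat{P_i},\ldots,P_{n+2})$, with structure morphism $F_i:Q\rightarrow Q_i$; write $Q_{i,j}$ for the nerve plot of the $n$-simplex $(P_0,\ldots,\widehat{P_i},\ldots,\widehat{P_j},\ldots,P_{n+2})$, together with its morphism $F_{i,j}:Q_i\rightarrow Q_{i,j}$. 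Expanding $\delta^{n+1}(\delta^n(c))$ on this simplex produces a double sum indexed by ordered pairs $(i,j)$ with $i\neq j$, and the goal is to pair the $(i,j)$-term against the $(j,i)$-term and show they cancel.

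First I would record the two structural facts that drive the cancellation. Since $A$ is a presheaf, i.e.\ a contravariant functor on $\mathsf{Plots}(X)$, pullbacks compose contravariantly, so $F_i^*\circ F_{i,j}^*=(F_{i,j}\circ F_i)^*$ as homomorphisms $A(Q_{i,j})\rightarrow A(Q)$. The defining commutation relation of a simplex, applied at the level of the present $(n+2)$-simplex, gives $F_{i,j}\circ F_i=F_{j,i}\circ F_j$ as morphisms $Q\rightarrow Q_{i,j}$, whence $(F_{i,j}\circ F_i)^*=(F_{j,i}\circ F_j)^*$. Because the two faces $(P_0,\ldots,\widehat{P_i},\ldots,\widehat{P_j},\ldots,P_{n+2})$ and $(P_0,\ldots,\widehat{P_j},\ldots,\widehat{P_i},\ldots,P_{n+2})$ are literally the same $n$-simplex with the same nerve plot $Q_{i,j}=Q_{j,i}$, the cochain $c$ takes the same value on both. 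Thus, for each unordered pair $\{i,j\}$, the terms of the double sum indexed by $(i,j)$ and $(j,i)$ carry identical elements of $A(Q)$, and only their signs can differ.

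It then remains to check that these paired terms enter with opposite signs, which is the only delicate point. When $P_i$ is deleted first, the inner application of $\delta^n$ on the face with nerve $Q_i$ sees $P_j$ in local position $j$ if $j<i$ and in position $j-1$ if $j>i$, so the inner sign is $(-1)^{j}$ or $(-1)^{j-1}$ accordingly. For a pair $i<j$, the $(i,j)$-term therefore carries the sign $(-1)^i(-1)^{j-1}=(-1)^{i+j-1}$, while the $(j,i)$-term carries $(-1)^j(-1)^{i}=(-1)^{i+j}$; these differ by $-1$, so the two terms cancel. Summing over all unordered pairs gives $\delta^{n+1}(\delta^n(c))(P_0,\ldots,P_{n+2})=0$, and since the $(n+2)$-simplex was arbitrary, $\delta^{n+1}\circ\delta^n=0$.

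The main obstacle is not conceptual but notational: one must verify that the index shift in the inner face operator is tracked consistently and that $F_{i,j}$ and $F_{j,i}$ indeed target the same nerve plot $Q_{i,j}$, so that the commutation relation from the definition of a simplex applies verbatim to the $(n+2)$-simplex at hand. Once these identifications are pinned down, the cancellation is the standard alternating-sign argument.
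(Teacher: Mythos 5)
Your proof is correct and follows essentially the same route as the paper: both expand $\delta^{n+1}\circ\delta^n$ into a double sum and cancel the $(i,j)$-term against the $(j,i)$-term using the presheaf functoriality $F_i^*\circ F_{i,j}^*=(F_{i,j}\circ F_i)^*$, the commutation relation $F_{i,j}\circ F_i=F_{j,i}\circ F_j$ from the definition of a simplex, and the index-shift sign discrepancy $(-1)^{i+j}$ versus $(-1)^{i+j-1}$. The paper phrases the cancellation as a reindexing of the two halves of the double sum rather than as pairwise cancellation over unordered pairs, but this is only a difference in bookkeeping, not in substance.
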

\begin{proof}
	For any $ n $-cochain $ c $ and $(n+2)$-simplex $ (P_0,\ldots,P_{n+2}) $ we have
	\begin{align*}
		(\delta^{n+1}\circ \delta^n)(c)(P_0,\ldots,P_{n+2})&=\displaystyle\sum_{i=0}^{n+2}(-1)^i F^*_i \delta^n(c) (P_0,\ldots,\widehat{P_i},\ldots,P_{n+2})\\
		&=\displaystyle\sum_{i=1}^{n+2}\sum_{j=0}^{i-1}(-1)^{i+j} (F^*_i\circ F^*_{i,j}) c (P_0,\ldots,\widehat{P_j},\ldots,\widehat{P_i},\ldots,P_{n+2})\\
		&+\displaystyle\sum_{i=0}^{n+1}\sum_{j=i+1}^{n+2}(-1)^{i+j-1} (F^*_i\circ F^*_{i,j}) c (P_0,\ldots,\widehat{P_i},\ldots,\widehat{P_j},\ldots,P_{n+2})\\
		&=\displaystyle\sum_{i=1}^{n+2}\sum_{j=0}^{i-1}(-1)^{i+j} (F^*_i\circ F^*_{i,j}) c (P_0,\ldots,\widehat{P_j},\ldots,\widehat{P_i},\ldots,P_{n+2})\\
		&-\displaystyle\sum_{j=0}^{n+1}\sum_{i=j+1}^{n+2}(-1)^{i+j} (F^*_j\circ F^*_{j,i}) c (P_0,\ldots,\widehat{P_j},\ldots,\widehat{P_i},\ldots,P_{n+2})=0.
	\end{align*}
	%In the last summation, we interchanged indices $ i $ and $ j $.
\end{proof}

Thus, we obtain a cochain complex $ C^{\bullet}(X,\mathcal{C},A) $,
\begin{center}
	$ 0\longrightarrow C^0(X,\mathcal{C},A)\stackrel{\delta^0}{\longrightarrow} C^1(X,\mathcal{C},A)\longrightarrow\cdots\longrightarrow C^n(X,\mathcal{C},A)\stackrel{\delta^n}{\longrightarrow}C^{n+1}(X,\mathcal{C},A)\longrightarrow\cdots$.
\end{center}

\begin{notation}
	We denote the $n$th cohomology group of the cochain complex $ C^{\bullet}(X,\mathcal{C},A) $ by $ H^n(X,\mathcal{C};A)$. 
\end{notation}

\begin{definition}(\cite{AD}). 
	Let  $X$ be a diffeological space and $ \mathcal{C}=\{P_\alpha\}_{\alpha\in I} $ be a covering generating family of $ X $.
	A \textbf{refinement} of $ \mathcal{C} $ is a covering generating family $ \mathcal{C}'=\{P'_\beta\}_{\beta\in J} $ together with 
	a map $ \lambda:J\rightarrow I $ and a family $ \{f_{\beta}\}_{\beta\in J} $  of morphisms $ P'_\beta\stackrel{f_\beta}{\longrightarrow} P_{\lambda(\beta)} $.
	In this situation, we have $ \mathcal{C}'\subseteq\lfloor\mathcal{C}\rfloor $ and denote such a refining by $ \mathcal{C}'\prec\mathcal{C} $.
	Refinements of covering generating families of $ X $ turn $ \mathsf{CGF}(X) $ into a category in a trivial way.
	A \textbf{common refinement} of $ \mathcal{C} $ and $ \mathcal{C}' $ is a covering generating family $ \mathcal{B} $ such that
	$ \mathcal{B}\prec\mathcal{C} $ and $ \mathcal{B}\prec\mathcal{C}' $.
\end{definition}

If  $ \mathcal{C}'\prec\mathcal{C} $  along with $ \lambda:J\rightarrow I $ is a refinement and
$ \sigma'=(P'_{\beta_0},\ldots,P'_{\beta_n}) $ is an $ n $-simplex on $ \mathcal{C}' $ with the nerve plot $ Q $, 
then $ \lambda(\sigma')=(P_{\lambda(\beta_0)},\ldots,P_{\lambda(\beta_n)})  $ constitutes an $ n $-simplex on $ \mathcal{C} $ with the nerve plot $ Q $. 
Moreover, $ \lambda:J\rightarrow I $  defines a chain morphism 
$ \lambda^{*}:C^{\bullet}(X,\mathcal{C},A)\rightarrow C^{\bullet}(X,\mathcal{C}',A)$
by 
\begin{center}
	$ \lambda^{*}_0(c)(P'_\beta)=f^{*}_{\beta}c(P_{\lambda(\beta)}) $
\end{center}
and
\begin{center}
	$ \lambda^{*}_n(c)(P'_{\beta_0},\ldots,P'_{\beta_n})=c(P_{\lambda(\beta_0)},\ldots,P_{\lambda(\beta_n)}),\qquad $ for $ n\geq 1 $.
\end{center}
Then	$ \delta^n\circ \lambda^{*}_n=\lambda^{*}_{n+1} \circ \delta^n $ for all $ n\geq 0 $.
Let $ \mu:J\rightarrow I $  be another refinement map of  $ \mathcal{C}'\prec\mathcal{C} $. Consider
$ D_n:C^{n}(X,\mathcal{C},A)\rightarrow C^{n-1}(X,\mathcal{C}',A) $ given by
\begin{center}
	$ D_n(c)(P'_{\beta_0},\ldots,P'_{\beta_{n-1}})=\displaystyle\sum_{j=0}^{n-1} (-1)^jc(P_{\mu(\beta_0)},\ldots,P_{\mu(\beta_j)},P_{\lambda(\beta_j)},\ldots,P_{\lambda(\beta_{n-1})}) $.
\end{center}
A straightforward computation shows that
\begin{center}	
	$ D_{n+1}\circ \delta^n-\delta^n\circ D_n=\lambda^{*}-\mu^{*} $.
\end{center}
That is, $ D $ is a chain homotopy between $ \lambda^{*} $ and $ \mu^{*} $. Therefore, any refinement $ \mathcal{C}'\prec\mathcal{C} $ induces a homomorphism 
$ j(\mathcal{C}',\mathcal{C}):H^n(X,\mathcal{C};A)\rightarrow H^n(X,\mathcal{C}';A) $
independent of the particular refinement map. 
Moreover, for refinements
$ \mathcal{C}''\prec\mathcal{C}'\prec\mathcal{C} $
we have
\begin{center}
	$ j(\mathcal{C}'',\mathcal{C})= j(\mathcal{C}',\mathcal{C})\circ j(\mathcal{C}'',\mathcal{C}') $.
\end{center}
This gives rise to a functor 
$ H^n(X,-;A):\mathsf{CGF}(X)^{op}\rightarrow\mathsf{Ab} $,  for each $ n\geq 0 $.
\begin{definition} 
	We define the $ n $-th \textbf{diffeological \v{C}ech cohomology group} of $ X $ with coefficients in a presheaf $ A $ as
	\begin{center}
		$ \check{H}^n(X;A)=\varinjlim_{\mathcal{C}}H^n(X,\mathcal{C};A). $
	\end{center}
\end{definition}
Naturally, $ H^0(X,\mathcal{C};A)$ is isomorphic to $ \ker\delta^0 $, i.e.,  
\begin{center}
	$\ker\Big{(}~\displaystyle\prod_{P\in\mathcal{C}}A(P)\stackrel{}{\longrightarrow}\displaystyle\prod_{P_0\stackrel{F_0}{\longleftarrow}Q\stackrel{F_1}{\longrightarrow} P_1}A(P_0\stackrel{F_0}{\longleftarrow}Q\stackrel{F_1}{\longrightarrow} P_1)~\Big{)} $, 
\end{center}
which is the same as $ \Sigma A(\mathcal{C}) $. 
Now consider the commutative diagram
\begin{displaymath}
	\xymatrix{
		\Sigma A(X)\ar[r]^{\alpha_{\mathcal{C}}}\ar[d]^{} &\Sigma A(\mathcal{C})\ar[r]^{\beta_{\mathcal{C}}\quad}\ar[d]^{} &  H^0(X,\mathcal{C};A)\ar[d]^{j(\mathcal{C}',\mathcal{C})}\ar[r]^{\gamma_{\mathcal{C}}} & \check{H}^0(X;A)\ar[d]^{}\\
		\Sigma A(X)\ar[r]^{\alpha_{\mathcal{C}'}}&\Sigma A(\mathcal{C}')\ar[r]^{\beta_{\mathcal{C}'}\quad}& H^0(X,\mathcal{C}';A)\ar[r]^{\gamma_{\mathcal{C}'}}& \check{H}^0(X;A)
	}
\end{displaymath}
where $ \gamma_{\mathcal{C}}:H^n(X,\mathcal{C};A)\rightarrow\check{H}^n(X;A) $ denotes the homomorphism given by the definition of colimit for $ \mathcal{C}\in\mathsf{CGF}(X) $. 
\begin{proposition}\label{p-0th}
	If $ A $ is a quasi-sheaf (and especially, a sheaf), then the natural homomorphism $ h=\gamma_{\mathcal{C}}\circ\beta_{\mathcal{C}}\circ\alpha_{\mathcal{C}}:\Sigma A(X)\longrightarrow \check{H}^0(X;A) $ given above is an isomorphism.
\end{proposition}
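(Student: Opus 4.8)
The plan is to show that the direct system whose colimit defines $\check H^0(X;A)$ is constant up to canonical isomorphism, with constant value $\Sigma A(X)$, so that $h$ is identified with the canonical map out of a constant system. Thus I would factor the argument into three steps: that each leg $\beta_{\mathcal{C}}\circ\alpha_{\mathcal{C}}$ is an isomorphism, that these legs form a natural isomorphism to the functor $H^0(X,-;A)$, and that the colimit of the relevant constant functor returns its value.

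First I would record that the two maps composing into $h$ are already isomorphisms before passing to the colimit. The map $\beta_{\mathcal{C}}\colon\Sigma A(\mathcal{C})\to H^0(X,\mathcal{C};A)$ is an isomorphism for every $\mathcal{C}$, since in the discussion preceding the statement $H^0(X,\mathcal{C};A)=\ker\delta^0$ is literally identified with $\Sigma A(\mathcal{C})$. The map $\alpha_{\mathcal{C}}\colon\Sigma A(X)\to\Sigma A(\mathcal{C})$ is an isomorphism by the defining property of a quasi-sheaf, and the parenthetical ``especially a sheaf'' case is subsumed because every sheaf is a quasi-sheaf. Hence $h_{\mathcal{C}}:=\beta_{\mathcal{C}}\circ\alpha_{\mathcal{C}}\colon\Sigma A(X)\to H^0(X,\mathcal{C};A)$ is an isomorphism for each $\mathcal{C}\in\mathsf{CGF}(X)$.

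Next I would read off naturality from the diagram preceding the statement: since the vertical arrow on $\Sigma A(X)$ is the identity, the two leftmost squares say exactly that $j(\mathcal{C}',\mathcal{C})\circ h_{\mathcal{C}}=h_{\mathcal{C}'}$ whenever $\mathcal{C}'\prec\mathcal{C}$. In other words the family $\{h_{\mathcal{C}}\}$ is a natural transformation from the constant functor with value $\Sigma A(X)$ to the functor $H^0(X,-;A)\colon\mathsf{CGF}(X)^{op}\to\mathsf{Ab}$, and by the previous step it is a natural \emph{isomorphism}. Applying the colimit functor, which sends natural isomorphisms to isomorphisms, produces an isomorphism $\varinjlim_{\mathcal{C}}\Sigma A(X)\xrightarrow{\sim}\varinjlim_{\mathcal{C}}H^0(X,\mathcal{C};A)=\check H^0(X;A)$; unwinding the cocone legs shows this induced map equals $\gamma_{\mathcal{C}}\circ h_{\mathcal{C}}=h$, independent of $\mathcal{C}$. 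It then remains to identify $\varinjlim_{\mathcal{C}}\Sigma A(X)$ with $\Sigma A(X)$. Since the colimit of a constant functor over a connected indexing category returns its value, I would verify that $\mathsf{CGF}(X)$ is connected: this is where the geometry enters, as the full diffeology $\mathcal{D}$ is itself a covering generating family and every $\mathcal{C}$ refines it, $\mathcal{C}\prec\mathcal{D}$ (indeed $\mathcal{C}\subseteq\mathcal{D}=\lfloor\mathcal{D}\rfloor$, with identity morphisms as refinement data). So every object admits a refinement morphism to $\mathcal{D}$, making $\mathsf{CGF}(X)$ (and its opposite) connected, whence $\varinjlim_{\mathcal{C}}\Sigma A(X)=\Sigma A(X)$ and $h$ is an isomorphism.

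I expect the main obstacle to be precisely this last bookkeeping that the colimit genuinely computes as the constant value: the fact that every transition map $j(\mathcal{C}',\mathcal{C})$ is an isomorphism (it factors as $h_{\mathcal{C}'}\circ h_{\mathcal{C}}^{-1}$) does \emph{not} by itself force the colimit to be a single term, and one must invoke connectedness of $\mathsf{CGF}(X)$ through the terminal-type object $\mathcal{D}$ rather than mere directedness. If one instead prefers a direct element chase---injectivity by showing that a class killed in $\check H^0(X;A)$ is already killed at some finer stage, and surjectivity by representing each colimit class at a single stage and pulling it back through $h_{\mathcal{C}}$---then the obstacle shifts to knowing the colimit is filtered, which requires the existence of common refinements together with the already-established independence of $j(\mathcal{C}',\mathcal{C})$ from the chosen refinement map.
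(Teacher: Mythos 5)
Your proof is correct, and it reaches the conclusion by a genuinely different execution of the final categorical step than the paper. Both arguments rest on the two facts you establish first: $\alpha_{\mathcal{C}}$ is an isomorphism by the quasi-sheaf axiom (sheaves being a special case), $\beta_{\mathcal{C}}$ is an isomorphism via the identification $H^0(X,\mathcal{C};A)=\ker\delta^0=\Sigma A(\mathcal{C})$, and the composites $h_{\mathcal{C}}=\beta_{\mathcal{C}}\circ\alpha_{\mathcal{C}}$ commute with the transition maps $j(\mathcal{C}',\mathcal{C})$. From there, the paper does not pass through constant functors: it observes that the inverses $(\beta_{\mathcal{C}}\circ\alpha_{\mathcal{C}})^{-1}$ form a cocone under the diagram $H^0(X,-;A)$ with vertex $\Sigma A(X)$, obtains a mediating map $g:\check{H}^0(X;A)\rightarrow\Sigma A(X)$ from the universal property, computes $g\circ h=\mathrm{id}$ directly, and deduces $h\circ g=\mathrm{id}$ from the uniqueness clause of the universal property (the identity is the only endomorphism of the colimit commuting with all the legs $\gamma_{\mathcal{C}}$). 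Your route---regard $\{h_{\mathcal{C}}\}$ as a natural isomorphism from the constant functor, apply $\varinjlim$, and evaluate the colimit of the constant functor using connectedness of $\mathsf{CGF}(X)$---proves the same statement, and your explicit check that every $\mathcal{C}$ refines the full diffeology $\mathcal{D}$ is a genuine addition rather than overhead: the paper's proof tacitly needs the same input, because its last step uses that $h=\gamma_{\mathcal{C}}\circ\beta_{\mathcal{C}}\circ\alpha_{\mathcal{C}}$ is independent of the choice of $\mathcal{C}$, and for two covering generating families with no direct refinement relation this independence is obtained precisely by the zigzag through $\mathcal{D}$ (or a common refinement) that you supply. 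What the paper's argument buys is brevity, with the delicate point absorbed into the phrase ``the natural homomorphism given above''; what yours buys is that this point---all transition maps being isomorphisms does not by itself collapse a colimit to a single term---is named and discharged explicitly.
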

\begin{proof}	
	If $ A $ is a quasi-sheaf,  $ \alpha_{\mathcal{C}}:\Sigma A(X)\rightarrow \Sigma A(\mathcal{C}) $ is an isomorphism for all $ \mathcal{C}\in\mathsf{CGF}(X) $. 
	Then 
	\begin{displaymath}
		\xymatrix{
			&\Sigma A(X)  & \\
			H^0(X,\mathcal{C};A) \ar[ur]^{(\beta_{\mathcal{C}}\circ\alpha_{\mathcal{C}})^{-1}}\ar[rr]_{j(\mathcal{C}',\mathcal{C})}& & H^0(X,\mathcal{C}';A) \ar[ul]_{(\beta_{\mathcal{C}'}\circ\alpha_{\mathcal{C}'})^{-1}}
		}
	\end{displaymath}
	is a cocone. The universal property gives us a unique homomorphism $ g:\check{H}^0(X;A)\rightarrow \Sigma A(X) $ with $ g\circ\gamma_{\mathcal{C}}=(\beta_{\mathcal{C}}\circ\alpha_{\mathcal{C}})^{-1} $. 
	Obviously,
	\begin{center}
		$ g\circ(\gamma_{\mathcal{C}}\circ\beta_{\mathcal{C}}\circ\alpha_{\mathcal{C}})=g\circ h=\mathrm{id} $.
	\end{center}
	On the other hand, we have
	\begin{center}
		$ h\circ g \circ\gamma_{\mathcal{C}}=(\gamma_{\mathcal{C}}\circ\beta_{\mathcal{C}}\circ\alpha_{\mathcal{C}})\circ(\beta_{\mathcal{C}}\circ\alpha_{\mathcal{C}})^{-1}=\gamma_{\mathcal{C}}. $
	\end{center}
	which that 
	$ h\circ g=\mathrm{id} $, because the identity $ \check{H}^0(X;A)\rightarrow\check{H}^0(X;A) $ is the only homomorphism with this property by universal property.
	Therefore, $ h $  is an isomorphism.
\end{proof}

Given a morphism $ \phi:A\rightarrow A' $ of presheaves on $X$,
define 
%\begin{center}
$ \phi^n_{*}:C^n(X,\mathcal{C},A)\rightarrow C^n(X,\mathcal{C},A')$
by 
$ \phi^n_{*}(c)(P_0,\ldots,P_n)=\phi_{Q}\big{(}c(P_0,\ldots,P_n)\big{)}  $,
%\end{center}
where $Q$ is the nerve plot of the $ n $-simplex $ (P_0,\ldots,P_n)$.
Then we have
$ \delta^n\circ \phi^n_{*}=\phi^{n+1}_{*}\circ \delta^n $, so that $ \phi:A\rightarrow A' $
induces  a homomorphism 
$ \phi_{\#}:H^n(X,\mathcal{C};A)\rightarrow H^n(X,\mathcal{C};A')$
between cohomology groups,
for every nonnegative integer $n$.
If $ \lambda:\mathcal{C}'\rightarrow\mathcal{C}  $ is a refinement, then 
the following diagram commutes:
\begin{displaymath}
	\xymatrix{
		H^n(X,\mathcal{C};A) \ar[d]_{\lambda^{*}}\ar[r]^{\phi_{\#}}& H^{n}(X,\mathcal{C};A') \ar[d]^{\lambda^{*}}\\
		H^n(X,\mathcal{C}';A)\ar[r]_{\phi_{\#}} & H^{n}(X,\mathcal{C}';A') 
	}
\end{displaymath}
Hence by universal property, $ \phi:A\rightarrow A' $ induces a  unique homomorphism
\begin{center}
	$ \phi_{\#}:\check{H}^n(X;A)\rightarrow \check{H}^n(X;A')$ 
\end{center}
with the property
\begin{displaymath}
	\xymatrix{
		\check{H}^n(X;A)\ar[r]^{\phi_{\#}}& \check{H}^n(X;A') \\
		H^n(X,\mathcal{C};A)\ar[u]^{\gamma_{\mathcal{C}}}\ar[r]_{\phi_{\#}} & H^{n}(X,\mathcal{C};A') \ar[u]_{\gamma'_{\mathcal{C}}}
	}
\end{displaymath}
where $ \gamma_{\mathcal{C}}:H^n(X,\mathcal{C};A)\rightarrow\check{H}^n(X;A) $ and $ \gamma'_{\mathcal{C}}:H^n(X,\mathcal{C};A')\rightarrow\check{H}^n(X;A') $ denote the homomorphisms given by the definition of colimit for $ \mathcal{C}\in\mathsf{CGF}(X) $. 

We also have
$ (\psi\circ\phi)_{\#}=\psi_{\#}\circ\phi_{\#} $
for morphism $ \phi:A\rightarrow A' $ and $ \psi:A'\rightarrow A'' $ of sheaves on $X$,
and $ \mathrm{id}_{\#}=\mathrm{id} $ for the identity morphism $ \mathrm{id}:A\rightarrow A $.
\begin{proposition}\label{p-long}
	Any short exact sequence
	$ 0\longrightarrow A'\stackrel{\phi}{\longrightarrow} A\stackrel{\psi}{\longrightarrow} A'' \longrightarrow 0 $
	of sheaves on a diffeological space $ X $ induces a long exact sequence
	\begin{align*}
		0\longrightarrow&\Sigma A'(X)\stackrel{\Sigma\phi}{\longrightarrow}\Sigma A(X)
		\stackrel{\Sigma\psi}{\longrightarrow}\Sigma A''(X)\stackrel{}{\longrightarrow}H^{1}(X;A')
		\\
		\cdots	\longrightarrow& \check{H}^n(X;A')
		\stackrel{\phi_{\#}}{\longrightarrow}\check{H}^n(X;A)
		\stackrel{\psi_{\#}}{\longrightarrow}\check{H}^n(X;A'')
		\stackrel{\partial}{\longrightarrow}\check{H}^{n+1}(X;A')
		\longrightarrow \cdots
	\end{align*}
	In addition, for any morphism of short exact sequences of sheaves 
	\begin{displaymath}
		\xymatrix{
			0 \ar[r] & A\ar[r]\ar[d]& A'\ar[r]\ar[d]& A''\ar[r]\ar[d]&0  \\
			0 \ar[r] & B\ar[r] & B'\ar[r]& B''\ar[r]&0 
		}
	\end{displaymath}
	the following diagram commutes:
	\begin{displaymath}
		\xymatrix{
			\check{H}^n(X ;A'') \ar[r]^{\partial}\ar[d]& \check{H}^{n+1}(X ;A) \ar[d]\\
			\check{H}^n(X ;B'') \ar[r]_{\partial} & \check{H}^{n+1}(X ;B) 
		}
	\end{displaymath}
	Hence for each $ n\geq 0 $, we obtain an exact $ \partial $-functor  $ \check{H}^n(X,-):\mathsf{Ab(Shv)}(X)\rightarrow\mathsf{Ab} $.
\end{proposition}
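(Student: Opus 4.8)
The plan is to obtain the long exact sequence by applying the zig-zag (snake) lemma cover-by-cover and then passing to the filtered colimit over $\mathsf{CGF}(X)$. First I would fix $\mathcal{C}\in\mathsf{CGF}(X)$ and examine the sequence of cochain complexes
\begin{center}
	$0\longrightarrow C^{\bullet}(X,\mathcal{C},A')\stackrel{\phi^{\bullet}_{*}}{\longrightarrow} C^{\bullet}(X,\mathcal{C},A)\stackrel{\psi^{\bullet}_{*}}{\longrightarrow} C^{\bullet}(X,\mathcal{C},A'')$.
\end{center}
Since each cochain group is a product $\prod A(Q)$ over the nerve plots $Q$ of the simplices on $\mathcal{C}$, and products preserve kernels, it is enough to check that $0\to A'(Q)\to A(Q)\to A''(Q)$ is exact for each plot $Q$. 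This is precisely left-exactness of the section functor: injectivity of $\phi_{Q}$ comes from separation of $A'$, and $\ker\psi_{Q}=\mathrm{im}\,\phi_{Q}$ is obtained by gluing the local preimages furnished by sheaf-exactness of $0\to A'\to A\to A''\to0$ (along the cover of $\mathrm{dom}(Q)$ by restrictions), using injectivity of $\phi$ to see that they agree on overlaps. Hence the displayed sequence is exact apart from the possible failure of surjectivity of $\psi^{\bullet}_{*}$.

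To repair the missing surjectivity I would replace the third term by the image subcomplex $B^{\bullet}(\mathcal{C}):=\mathrm{im}(\psi^{\bullet}_{*})\subseteq C^{\bullet}(X,\mathcal{C},A'')$, which is a subcomplex because $\delta$ commutes with $\psi^{\bullet}_{*}$. This produces, for each $\mathcal{C}$, a genuine short exact sequence of complexes
\begin{center}
	$0\longrightarrow C^{\bullet}(X,\mathcal{C},A')\longrightarrow C^{\bullet}(X,\mathcal{C},A)\longrightarrow B^{\bullet}(\mathcal{C})\longrightarrow0$,
\end{center}
to which the zig-zag lemma attaches a long exact sequence with connecting map $\partial_{\mathcal{C}}$. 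The refinement chain maps $\lambda^{*}$ are compatible with $\phi^{\bullet}_{*}$ and $\psi^{\bullet}_{*}$, so these long exact sequences form a directed system over $\mathsf{CGF}(X)$; as the refinement preorder is directed (common refinements exist) and filtered colimits are exact and commute with cohomology, taking $\varinjlim_{\mathcal{C}}$ yields a long exact sequence relating $\check H^{n}(X;A')$, $\check H^{n}(X;A)$ and $\varinjlim_{\mathcal{C}}H^{n}(B^{\bullet}(\mathcal{C}))$. Proposition \ref{p-0th} then identifies the degree-$0$ terms with the section groups $\Sigma A'(X),\Sigma A(X),\Sigma A''(X)$, giving the beginning of the stated sequence.

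The decisive point, and the step I expect to be the main obstacle, is the identification $\varinjlim_{\mathcal{C}}H^{n}(B^{\bullet}(\mathcal{C}))\cong\check H^{n}(X;A'')$. Writing $\mathcal{K}$ for the cokernel presheaf $Q\mapsto A''(Q)/\psi_{Q}(A(Q))$, the inclusions $B^{\bullet}(\mathcal{C})\hookrightarrow C^{\bullet}(X,\mathcal{C},A'')$ sit in short exact sequences of complexes with quotient $C^{\bullet}(X,\mathcal{C},\mathcal{K})$, so it suffices to prove the vanishing $\varinjlim_{\mathcal{C}}H^{n}(C^{\bullet}(X,\mathcal{C},\mathcal{K}))=0$. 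Here sheaf-exactness re-enters: local surjectivity of $\psi$ says exactly that $\mathcal{K}$ sheafifies to zero, i.e.\ every $s\in\mathcal{K}(Q)$ satisfies $s\upharpoonright(Q|_{V})=0$ for a suitable open $V$. I would then prove, mimicking the classical fact that \v{C}ech cohomology of a presheaf with vanishing sheafification is zero, that any cocycle in $C^{n}(X,\mathcal{C},\mathcal{K})$ becomes a coboundary after a sufficiently fine refinement. The delicate part is that, unlike the classical setting where refining a cover shrinks the intersections carrying the cochain values, here a simplex carries its value on a \emph{nerve plot} $Q$ whose domain is not automatically shrunk by refining $\mathcal{C}$; so the argument must instead refine to covering generating families so fine that the sections of $\mathcal{K}$ carried by the nerve plots of the refined simplices all restrict to zero, which requires controlling these nerve plots simultaneously. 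Modulo this vanishing lemma, the long exact sequence is established.

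Finally, naturality is automatic. For a morphism of short exact sequences of sheaves, the snake-lemma connecting maps $\partial_{\mathcal{C}}$ commute with the induced vertical maps for every $\mathcal{C}$, and passing to the functorial colimit preserves the commuting square for $\partial$. Hence $\check H^{n}(X,-)$ together with these connecting homomorphisms constitutes an exact $\partial$-functor on $\mathsf{Ab(Shv)}(X)$, as claimed.
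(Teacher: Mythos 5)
Your overall architecture coincides with the paper's proof step for step: termwise left-exactness of the cochain complexes (checked plotwise, since the cochain groups are products over nerve plots), replacement of $C^{\bullet}(X,\mathcal{C},A'')$ by the image subcomplex --- your $B^{\bullet}(\mathcal{C})=\mathrm{im}(\psi^{\bullet}_{*})$ is exactly the paper's $C^{\bullet}(X,\mathcal{C},Im_{A''})$, because the image of a product of homomorphisms is the product of the images --- then the zig-zag lemma for each fixed $\mathcal{C}$, passage to the filtered colimit over $\mathsf{CGF}(X)$, Proposition \ref{p-0th} to identify the degree-zero terms, and naturality of the connecting maps obtained from naturality of the snake lemma.

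However, your proposal is not a complete proof: the statement you yourself call the decisive point --- the vanishing $\varinjlim_{\mathcal{C}}H^{n}\big(C^{\bullet}(X,\mathcal{C},\mathcal{K})\big)=0$ for the cokernel presheaf $\mathcal{K}$, equivalently that the inclusion $Im_{A''}\hookrightarrow A''$ induces an isomorphism $\varinjlim_{\mathcal{C}}H^{n}(B^{\bullet}(\mathcal{C}))\cong\check{H}^{n}(X;A'')$ --- is precisely where all the content of the proposition lies, and you leave it unproven (``modulo this vanishing lemma''). This is a genuine gap, not a deferred routine verification: as you correctly observe, the classical shrinking argument does not transfer, since for $n\geq 1$ the refinement map $\lambda^{*}$ leaves the values of a cochain untouched (a simplex on $\mathcal{C}'\prec\mathcal{C}$ has the \emph{same} nerve plot $Q$ as the corresponding simplex on $\mathcal{C}$), so refining $\mathcal{C}$ by itself shrinks nothing. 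The paper closes this step differently and directly: it shows the map $\theta^{n}_{\#}:\check{H}^{n}(X;Im_{A''})\rightarrow\check{H}^{n}(X;A'')$ induced by the inclusion is bijective, using local solvability of $\psi_{P}(a)=a''$ to produce a refinement $\mathcal{G}\prec\mathcal{C}$ consisting of restrictions of elements of $\mathcal{C}$ over whose members every section of $A''$ lifts to $A$; surjectivity then comes from lifting $\lambda^{*}c''$ through $\psi_{*}$, and injectivity from a further refinement $\mu:\mathcal{H}\rightarrow\mathcal{G}$ pushing the bounding cochain $b$ into $C^{n-1}(X,\mathcal{H};Im_{A''})$. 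One may note that the difficulty you flag (values of higher cochains sit on arbitrary nerve plots, not on members of the refined family) is exactly the pressure point of that argument as well; but whereas the paper supplies a resolution of this step, your write-up proves the proposition only conditionally on a lemma you do not establish.
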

\begin{proof}
	Suppose that
	\begin{center}
		$ 0\longrightarrow A'\stackrel{\phi}{\longrightarrow} A\stackrel{\psi}{\longrightarrow} A'' \longrightarrow 0 $
	\end{center}
	is a short exact sequence of sheaves on a diffeological space $ X $. By left-exactness, for all plots $ P $ in $ X $, sequences
	\begin{center}
		$ 0\longrightarrow A'(P)\stackrel{\phi_{P}}{\longrightarrow} A(P)\stackrel{\psi_{P}}{\longrightarrow} A''(P) $
	\end{center}
	are exact and consequently, so is the sequence
	\begin{center}
		$ 0\longrightarrow C^n(X,\mathcal{C},A')\stackrel{\phi_{*}}{\longrightarrow} C^n(X,\mathcal{C},A)\stackrel{\psi_{*}}{\longrightarrow} C^n(X,\mathcal{C},A''). $
	\end{center}
	Let $ Im_{A''} $ be the presheaf of the image of $ \psi $ as a sub-presheaf of $ A'' $ 
	so that
	\begin{center}
		$ 0\longrightarrow C^n(X,\mathcal{C},A')\stackrel{\phi_{*}}{\longrightarrow} C^n(X,\mathcal{C},A)\stackrel{\psi_{*}}{\longrightarrow} C^n(X,\mathcal{C},Im_{A''})\longrightarrow 0 $
	\end{center}
	is an exact sequence of cochain complexes.
	This gives rise to a long exact sequence
	\begin{align*}
		\cdots	\longrightarrow& H^n(X,\mathcal{C};A')
		\stackrel{\phi_{\#}}{\longrightarrow}H^n(X,\mathcal{C};A)
		\stackrel{\psi_{\#}}{\longrightarrow}H^n(X,\mathcal{C};Im_{A''})
		\stackrel{\partial}{\longrightarrow}H^{n+1}(X,\mathcal{C};A')
		\longrightarrow \cdots
	\end{align*}
	Taking colimits over covering generating families, one obtain  a long exact sequence
	\begin{align}\label{eq1}
		\cdots	\longrightarrow& \check{H}^n(X;A')
		\stackrel{\phi_{\#}}{\longrightarrow}\check{H}^n(X;A)
		\stackrel{\psi_{\#}}{\longrightarrow}\check{H}^n(X;Im_{A''})
		\stackrel{\partial}{\longrightarrow}\check{H}^{n+1}(X;A')
		\longrightarrow \cdots
	\end{align}
	Similarly, by zigzag lemma, for any morphism of short exact sequences of sheaves
	\begin{displaymath}
		\xymatrix{
			0 \ar[r] & A\ar[r]\ar[d]& A'\ar[r]\ar[d]& A''\ar[r]\ar[d]&0  \\
			0 \ar[r] & B\ar[r] & B'\ar[r]& B''\ar[r]&0 
		}
	\end{displaymath}
	the following diagram commutes:
	\begin{displaymath}
		\xymatrix{
			\check{H}^n(X ;Im_{A''}) \ar[r]^{\partial}\ar[d]& \check{H}^{n+1}(X ;A) \ar[d]\\
			\check{H}^n(X ;Im_{B''}) \ar[r]_{\partial} & \check{H}^{n+1}(X ;B) 
		}
	\end{displaymath}
	
	But for each $ n $, the inclusion cochain map
	$  C^n(X,\mathcal{C},Im_{A''})\rightarrow C^n(X,\mathcal{C},A'') $
	induces a natural homomorphism
	$ \theta^n_{\mathcal{C}}:H^n(X,\mathcal{C};Im_{A''}) \rightarrow H^n(X,\mathcal{C};A'') $.
	On passage to the colimit, we get a homomorphism
	\begin{center}
		$ \theta^n_{\#}:\check{H}^n(X;Im_{A''}) \rightarrow \check{H}^n(X;A''). $
	\end{center}
	To complete the proof, it is enough to show that $ \theta^n_{\#} $ is in fact an isomorphism.
	
	To see that $ \theta^n_{\#} $ is surjective, let $ [c'']\in \check{H}^n(X;A'') $ so that $ [c]\in H^n(X,\mathcal{C};A'') $ for some $ \mathcal{C}\in\mathsf{CGF}(X) $.
	Then $ c'' $ is a cocycle element of $ C^n(X,\mathcal{C};A'') $.
	As the sequence
	\begin{center}
		$ 0\longrightarrow A'\stackrel{\phi}{\longrightarrow} A\stackrel{\psi}{\longrightarrow} A'' \longrightarrow 0 $
	\end{center}
	is exact, one can solve locally the equation $ \psi_{P}(a) =a'' $ on each $ P\in\mathcal{C} $ for all $ a''\in A''(P) $.
	In particular, one can find a refinement $ \lambda:\mathcal{G}\rightarrow\mathcal{C}  $, consisting of restrictions of elements of $ \mathcal{C} $, such that for all $ P\in\mathcal{G} $ and $ a''\in A''(P) $, the equation $ \psi_{P}(a) =a'' $ has a solution $ a\in A(P) $.
	Thus, there exists $ c\in C^n(X,\mathcal{G};A) $ such that $ \psi_{*}(c)=\lambda^*c'' $, which implies 
	$ \theta^n_{\#}[\psi_{*}(c)]=[\lambda^*c'']=[c''] $.
	
	To see that $ \theta^n_{\#} $ is injective, let $ \theta^n_{\#}([c])=0 $ for any cocycle element $ c\in C^n(X,\mathcal{C};Im_{A''}) $.
	Then we get $ \lambda^*c=\delta^{n-1}(b) $, for some refinement $ \lambda:\mathcal{G}\rightarrow\mathcal{C}  $ and $ b\in C^{n-1}(X,\mathcal{G};A'') $.
	Again by exactness of
	$ 0\longrightarrow A'\stackrel{\phi}{\longrightarrow} A\stackrel{\psi}{\longrightarrow} A'' \longrightarrow 0 $,
	one can find a refinement $ \mu:\mathcal{H}\rightarrow\mathcal{G}  $ such that
	$ \mu^*(b)\in C^{n-1}(X,\mathcal{H};Im_{A''}) $. Therefore,
	\begin{center}
		$ (\lambda\circ\mu)^*(c)=\mu^*\circ\lambda^*(c) =\mu^*\circ\delta^{n-1}(b) =\delta^{n-1}\circ\mu^*(b)=\delta^{n-1}\circ\psi_{*}(a),  $
	\end{center}
	and so $ [c]=[(\lambda\circ\mu)^*(c)]=0 $ in $ \check{H}^n(X;Im_{A''}) $.
	
	Now by replacing $ \check{H}^n(X;A'') $ with $ \check{H}^n(X;Im_{A''}) $ under the isomorphism $ \theta^n_{\#} $ in (\ref{eq1}) and in view of Proposition \ref{p-0th}, the result is achieved.
\end{proof}

\section{The de Rham theorem}\label{S5}   
The aim of this section is to provide a setup that gives rise to the generalized Mayer-Vietoris sequence and the de Rham theorem. 
\subsection{The generalized Mayer-Vietoris sequence}

\begin{definition}\label{def-pu} 
	Let $ X $ be a diffeological space.
	A \textbf{smooth partition of unity subordinate to a covering generating family} $ \mathcal{C}=\{P_{\alpha}\}_{\alpha\in I} $ of $ X $ is 
	\begin{enumerate}
		\item[(a)] a collection $ \{ f_{\alpha}:X\rightarrow[0,1]\}_{\alpha\in I} $ of smooth functions on $ X $, where $ [0,1]\subseteq \mathbb{R} $ is equipped with the subspace diffeology, 
		
		\item[(b)] an assignment to each plot $ Q  $ in $ X $, 
		a  collection $ \{Q_{\alpha}  \}_{\alpha\in I} $ of  plots 	with $ Q=\mathsf{sup}_{\alpha\in I}Q_{\alpha} $,
	\end{enumerate}
	such that
	$ \{ Q^*(f_{\alpha}) \}_{\alpha\in I} $ is a  smooth partition of unity subordinate to $ \{ \mathrm{dom}(Q_{\alpha})\}_{\alpha\in I} $, i.e.,
	\begin{enumerate}
		\item[(i)]
		$ {\rm{supp}}\big{(}Q^*(f_{\alpha})\big{)}\subseteq \mathrm{dom}(Q_{\alpha}) $ for each $ \alpha\in I $,
		\item[(ii)]
		$ \{{\rm{supp}}\big{(}Q^*(f_{\alpha})\big{)}\}_{\alpha\in I} $ is locally finite, 
		\item[(iii)]
		$ \displaystyle\sum_{\alpha\in I} Q^*(f_{\alpha})(r)=1 $ for all $ r\in\mathrm{dom}(Q) $. 
	\end{enumerate}
\end{definition}

\begin{example} 
	Let $ M $ be a usual manifold with an atlas $ \mathfrak{A}=\{ \varphi_{\alpha}:U_{\alpha}\rightarrow M\}_{\alpha\in I} $ (as a covering generating family).
	Then the pullback of any smooth partition of unity  $ \{ f_{\alpha}:X\rightarrow[0,1]\}_{\alpha\in I} $ subordinate to $ \{ \mathrm{Im}(\varphi_{\alpha}) \}_{\alpha\in I} $ induces a smooth partition of unity subordinate to $ \mathfrak{A} $.
\end{example}

\begin{setup} \label{setting-1}
	Let $ \mathcal{C}=\{P_{\alpha}\}_{\alpha\in I} $ be a covering generating family for a diffeological space $ (X,\mathcal{D}_X) $, equipped with the diffeology $ \mathcal{D}_X $. For every $ \alpha\in I $ and each plot $ Q $ in $ X $, suppose we are given
	a collection
	\begin{center}
		$ \Delta_0(\alpha,Q)= \{Q^{\alpha\beta }\stackrel{G_Q^{\alpha\beta }}{\longrightarrow} P_{\alpha} \}_{\beta\in J_{\alpha}} $
	\end{center}
	%consider the collection of all morphisms $ \{Q_{\alpha\beta }\stackrel{G^Q_{\alpha\beta }}{\longrightarrow} P_{\alpha} \}_{\beta\in J_{\alpha}} $ into $ P_{\alpha} $ from some restriction $ Q_{\alpha\beta} $ of $ Q $.
	of morphisms of plots from some restrictions $ Q^{\alpha\beta } $ of $ Q $ into $ P_{\alpha} $,
	such that the sources of elements of
	\begin{center}
		$ \Delta_0(Q)=\bigcup_{ \alpha\in I }\Delta_0(\alpha,Q) $ 
	\end{center}
	constitute a covering for $ Q $, i.e.,
	the collection $ \{ \mathrm{dom}(Q^{\alpha\beta})\}_{\alpha\in I,\beta\in J_{\alpha}} $ covers $ \mathrm{dom}(Q) $.
	%is a locally finite cover of  $ \mathrm{dom}(Q) $, in the sense that for every $ r\in\mathrm{dom}(Q) $ there exists an open neighborhood of $ r $ that intersects at most finitely many of $ \mathrm{dom}(Q^{\alpha\beta}) $ for $\alpha\in I,\beta\in J_{\alpha} $.
	Also, assume that morphisms of plots are \textbf{locally extendable} on 
	\begin{center}
		$ \mathrm{Mor}_{loc}(\mathcal{C})=\bigcup_{ Q\in\mathcal{D}_X}\Delta_0(Q) $, 
	\end{center}
	meaning that, for given morphisms $ Q\stackrel{G}{\longrightarrow} P  $ and $ Q\stackrel{F}{\longrightarrow} Q' $ of plots in $ X $ with $ Q\stackrel{G}{\longrightarrow} P  $ belonging to $ \mathrm{Mor}_{loc}(\mathcal{C}) $, and for every $ r\in\mathrm{dom}(Q) $, there are an open neighborhood $ V\subseteq\mathrm{dom}(Q) $ of $ r $  and an element
	$ Q' \stackrel{G'}{\longrightarrow} P $ of $ \mathrm{Mor}_{loc}(\mathcal{C})  $ such that the following diagram commutes:
	\begin{displaymath}
		\xymatrix{
			&P \\
			Q'\ar@{.>}[ur]^{G'} & Q|_{V}\ar[u]_{G|_{V}}\ar[l]^{F|_{V}} 	 	} 
	\end{displaymath}
	
\end{setup}

\begin{construction}
	\em
	In the situation of Setup \ref{setting-1},
	for each $ 0 $-simplex $ P_0\in \mathcal{C} $, 
	we automatically get a  collection 
	\begin{center}
		$ \Delta_1(P_0)=\{ P_0\stackrel{\imath_{\alpha\beta}}{\longleftarrow} P_0^{\alpha\beta}\stackrel{G_0^{\alpha\beta}}{\longrightarrow} P_{\alpha}\} $ 
	\end{center}
	of $ 1 $-simplexes on $ \mathcal{C} $ in which $ P_0^{\alpha\beta}\stackrel{G_0^{\alpha\beta}}{\longrightarrow} P_{\alpha} $ belongs to $ \Delta_0(P_0) $ and $ P_0^{\alpha\beta}\stackrel{\imath_{\alpha\beta}}{\longrightarrow} P_0 $ is the canonical inclusion. Notice that the collection of nerve plots of elements of $ \Delta_1(P_0) $ forms a covering for $ P_0$.
	
	Now let $ P_0\stackrel{F_1}{\longleftarrow}Q\stackrel{F_0}{\longrightarrow} P_1 $ be a $ 1 $-simplex on $ \mathcal{C} $ and $ r\in\mathrm{dom}(Q) $. Then $ r\in\mathrm{dom}(Q^{\alpha\beta}) $ for some $ Q^{\alpha\beta} $.
	Since morphisms of plots are locally extendable on $ \mathrm{Mor}_{loc}(-,\mathcal{C}) $, for the corresponding morphism $ Q^{\alpha\beta }\stackrel{G_Q^{\alpha\beta }}{\longrightarrow} P_{\alpha} \in \Delta_0(Q) $,
	we have morphisms $ P_0^{\alpha\beta_0}\stackrel{H_0^{\alpha\beta_0}}{\longrightarrow} P_{\alpha}\in\Delta_0(P_0) $ and $ P_0^{\alpha\beta}\stackrel{H_1^{\alpha\beta_1}}{\longrightarrow} P_{\alpha_1}\in\Delta_0(P_1) $ 
	%with $ F_0(r)\in\mathrm{dom}(P_0^{\alpha\beta_0}) $ and $ F_1(r)\in\mathrm{dom}(P_1^{\alpha\beta_1}) $,
	and an open neighborhood $ V\subseteq\mathrm{dom}(Q) $ of $ r $
	such that 
	\begin{displaymath}
		\xymatrix{
			&P_{\alpha}& \\
			P_0^{\alpha\beta_0}\ar[ur]^{H_0^{\alpha\beta_0}} & Q^{\alpha\beta}|_{V}\ar[u]_{G_Q^{\alpha\beta }}\ar[l]^{F_0|_{V}}\ar[r]_{F_1|_{V}} &	 P_1^{\alpha\beta_1}\ar[ul]_{H_1^{\alpha\beta_1}}	} 
	\end{displaymath}
	This diagram can be completed via the canonical inclusions so that a   $ 2 $-simplexes on $ \mathcal{C} $ is obtained as the following: 
	\begin{center}
		\begin{tikzpicture}
			\matrix (m) [matrix of math nodes, row sep=.8em,
			column sep=0.6em, text height=1.2ex, text depth=0.25ex]
			{ & & P_{\alpha} & & \\
				& &  & & \\
				& &  & & \\
				& P_0^{\alpha\beta}  & & P_1^{\alpha\beta} & \\
				& & Q^{\alpha\beta}|_{V} & & \\
				P_0 & & Q & & P_1 \\};
			\path[->,font=\scriptsize]
			(m-4-2) edge node[auto] {$H_0^{\alpha\beta} $} (m-1-3) edge node[above] {} (m-6-1)
			(m-4-4) edge node[above] {$\qquad H_1^{\alpha\beta} $} (m-1-3) edge node[auto] {} (m-6-5)
			(m-6-3) edge node[auto] {$ F_1 $} (m-6-1) edge node[below] {$ F_0 $} (m-6-5)
			(m-5-3) edge node[auto]  { } (m-4-2) edge node[below]  { } (m-4-4) edge node[auto]  { } (m-6-3);
		\end{tikzpicture} 
	\end{center}
	with the nerve plot $ Q^{\alpha\beta}|_{V} $
	in which $ P_0\stackrel{\imath_{\alpha\beta}}{\longleftarrow} P_0^{\alpha\beta}\stackrel{H_0^{\alpha\beta}}{\longrightarrow} P_{\alpha}\in\Delta_1(P_0) $ and $ P_1\stackrel{\imath_{\alpha\beta}}{\longleftarrow} P_0^{\alpha\beta}\stackrel{H_1^{\alpha\beta}}{\longrightarrow} P_{\alpha}\in\Delta_1(P_1) $.
	%in which $ Q_{\alpha}=Q|_W $ and $ W=F_{1}^{-1}(V_{\alpha,0})\cap F_{0}^{-1}(V_{\alpha,1}) $.
	Hence, for each $ 1 $-simplex $ P_0\stackrel{F_1}{\longleftarrow}Q\stackrel{F_0}{\longrightarrow} P_1 $ on $ \mathcal{C} $,
	we have a collection $ \Delta_2(P_0,P_1)=\{ (P_{\alpha},P_0,P_1)\} $ of $ 2 $-simplexes on $ \mathcal{C} $ 
	such that the nerve plots of elements of $ \Delta_2(P_0,P_1) $ constitute a covering for $ Q$, and 
	$ (P_{\alpha},P_1)\in\Delta_1(P_1) $ and $ (P_{\alpha},P_0)\in\Delta_1(P_0) $.
	
	Inductively, in the situation of Setup \ref{setting-1},		
	for any $ n $-simplex $ (P_0,\ldots,P_n) $ on $ \mathcal{C} $ with the nerve plot $Q$, we can construct a collection $ \Delta_{n+1}(P_0,\ldots,P_{n})=\{ (P_{\alpha},P_0,\ldots,P_{n})\} $ of $ (n+1) $-simplexes on $ \mathcal{C} $ 
	%with the nerve plots $ \{ Q_{\alpha}\}_{\alpha\in I } $, respectively,
	with the following properties:
	\begin{enumerate}
		\item[$ \bullet $]
		The collection of nerve plots of elements of $ \Delta_{n+1}(P_0,\ldots,P_{n}) $ constitutes a covering for $ Q$. In addition, since domains are paracompact, in each step we can assume that the obtained covering for  $ Q $ is locally finite. 
		\item[$ \bullet $]
		The morphism $ Q_{(P_{\alpha},P_0,\ldots,P_{n})}\stackrel{\imath}{\longrightarrow} Q_{(P_0,\ldots,P_{n})} $ is the canonical inclusion, and 
		for each $ i=0,\ldots,n $, $ Q_{(P_{\alpha},P_0,\ldots,P_{n})}\stackrel{F_i\circ\imath }{\longrightarrow} Q_{(P_{\alpha},P_0,\ldots,\widehat{P_i},\ldots,P_{n})} $
		is a restriction of the morphism $ Q\stackrel{F_i}{\longrightarrow} Q_i $ given by the $ n $-simplex $ (P_0,\ldots,P_n) $.
		
		\item[$ \bullet $]
		$ (P_{\alpha},P_0,\ldots,\widehat{P_i},\ldots,P_{n}) $ belongs to $ \Delta_{n}(P_0,\ldots,\widehat{P_i},\ldots,P_{n}) $, for each $ i=1,\ldots,n $. 
		%with the nerve plots $ \{ Q_{\alpha\beta}\}_{\alpha\in I, \beta\in J_{\alpha}} $, respectively. 
	\end{enumerate}

	Now suppose that there exists a smooth partition of unity $ \{ f_{\alpha}:X\rightarrow[0,1]\}_{\alpha\in I} $ subordinate to $ \mathcal{C}=\{P_{\alpha}\}_{\alpha\in I} $ and consider the operator
	\begin{center}
		$ K^n:C^{n+1}(X,\mathcal{C},\Lambda^k)\longrightarrow C^n(X,\mathcal{C},\Lambda^k) $
	\end{center}
	given by
	\begin{center}
		$ (K^n\omega)(P_0,\ldots,P_{n})=\displaystyle\sum_{\Delta_{n+1}(P_0,\ldots,P_{n})} Q_{(P_{\alpha},P_0,\ldots,P_{n})}^*(f_{\alpha})~~~\omega(P_{\alpha},P_0,\ldots,P_{n}), $
	\end{center}
	where $ Q_{(P_{\alpha},P_0,\ldots,P_{n})} $ denotes the nerve plot of an $ (n+1) $-simplex $ (P_{\alpha},P_0,\ldots,P_{n}) $ constructed from $ n $-simplex $ (P_0,\ldots,P_{n}) $ with the nerve plot $ Q $, as above. Each term 
	\begin{center}
		$ Q_{(P_{\alpha},P_0,\ldots,P_{n})}^*(f_{\alpha})~~~\omega(P_{\alpha},P_0,\ldots,P_{n}) $
	\end{center}
	is defined on the whole $ \mathrm{dom}(Q) $ to be zero outside $ \mathrm{dom}(Q_{(P_{\alpha},P_0,\ldots,P_{n})})\setminus{\rm{supp}}(Q^*(f_{\alpha})) $.
\end{construction}
\begin{proposition} 
	$ K^n\circ\delta^n+\delta^{n-1}\circ K^{n-1}=\mathrm{id} $.
\end{proposition}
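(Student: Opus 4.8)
The plan is to prove this homotopy identity by a direct computation in the spirit of the classical partition-of-unity proof of the Mayer--Vietoris sequence, while carefully tracking the restriction morphisms of nerve plots that are peculiar to the diffeological setting. Fix an $n$-cochain $\omega\in C^n(X,\mathcal{C},\Lambda^k)$ and an $n$-simplex $(P_0,\ldots,P_n)$ with nerve plot $Q$. First I would expand $(K^n\circ\delta^n)(\omega)(P_0,\ldots,P_n)$ from the definitions: applying $\delta^n$ inside the sum over $\Delta_{n+1}(P_0,\ldots,P_n)$ produces, for each $(n+1)$-simplex $(P_\alpha,P_0,\ldots,P_n)$, the alternating face sum $\sum_{i=0}^{n+1}(-1)^iF_i^*\omega(\ldots)$. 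I would then separate the term $i=0$, in which the inserted vertex $P_\alpha$ is deleted, so that the face is exactly $(P_0,\ldots,P_n)$ and $F_0$ is the canonical inclusion $\imath$ of nerve plots supplied by the Construction, from the terms $i\geq 1$, in which one of the original vertices $P_{i-1}$ is deleted.

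Next I would expand $(\delta^{n-1}\circ K^{n-1})(\omega)(P_0,\ldots,P_n)=\sum_{j=0}^n(-1)^jF_j^*(K^{n-1}\omega)(P_0,\ldots,\widehat{P_j},\ldots,P_n)$ and substitute the definition of $K^{n-1}$, so that each summand becomes a sum over $\Delta_n(P_0,\ldots,\widehat{P_j},\ldots,P_n)$ of terms weighted by $Q^*(f_\alpha)$ and involving $\omega(P_\alpha,P_0,\ldots,\widehat{P_j},\ldots,P_n)$. The structural input here is the third bullet of the Construction, which says that each $(n+1)$-simplex $(P_\alpha,P_0,\ldots,\widehat{P_j},\ldots,P_n)$ arising from $\Delta_n(P_0,\ldots,\widehat{P_j},\ldots,P_n)$ occurs as a face of an element of $\Delta_{n+1}(P_0,\ldots,P_n)$, together with the second bullet, which guarantees that the face morphisms $F_i$ on the nerve plots restrict to the canonical inclusions. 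After reindexing $i\mapsto j=i-1$ in the $i\geq 1$ part of the first composite, these terms match the summands of the second composite but carry opposite signs, $(-1)^{j+1}$ against $(-1)^{j}$; using the compatibility relation $F_{i,j}\circ F_i=F_{j,i}\circ F_j$ from the definition of simplices to identify the pullbacks on a common domain, they cancel in pairs.

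What then remains is exactly the $i=0$ contribution of the first composite,
\begin{center}
	$\displaystyle\sum_{\Delta_{n+1}(P_0,\ldots,P_n)}Q_{(P_\alpha,P_0,\ldots,P_n)}^*(f_\alpha)\;\imath^*\omega(P_0,\ldots,P_n),$
\end{center}
where $\imath$ is the inclusion of $\mathrm{dom}(Q_{(P_\alpha,P_0,\ldots,P_n)})$ into $\mathrm{dom}(Q)$. Since each summand is extended by zero outside the support of $Q^*(f_\alpha)$, and since property (iii) of the partition of unity gives $\sum_{\alpha}Q^*(f_\alpha)=1$ on $\mathrm{dom}(Q)$, this sum collapses to $\omega(P_0,\ldots,P_n)$. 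Here local finiteness, guaranteed in the Construction by paracompactness of the domains, ensures the sum is locally finite and defines a genuine smooth form, and the locality of the sheaf $\Lambda^k$ permits verifying the identity on the open cover of $\mathrm{dom}(Q)$ by the $\mathrm{dom}(Q_{(P_\alpha,P_0,\ldots,P_n)})$. This establishes $K^n\circ\delta^n+\delta^{n-1}\circ K^{n-1}=\mathrm{id}$.

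I expect the main obstacle to be controlling the cancellation in the middle: one must verify not a merely formal sign cancellation but that the differential forms being subtracted genuinely restrict to the same domain and agree there. This is where the bookkeeping of the nerve-plot inclusions and the commuting relations among the face morphisms $F_i$ and $F_{i,j}$ is essential, and where the precise indexing of the collections $\Delta_{n+1}$ against $\Delta_n$ from the Construction must be invoked, including the matching of the partition index $\alpha$ with the simplices it labels. By comparison, the partition-of-unity collapse in the final step and the well-definedness by local finiteness are routine once the terms are known to be extended by zero correctly.
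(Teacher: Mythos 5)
Your proposal is correct and follows essentially the same route as the paper's own proof: expand both composites, isolate the face that deletes the inserted vertex $P_\alpha$ so that the partition-of-unity identity $\sum_\alpha Q^*(f_\alpha)=1$ produces $\omega(P_0,\ldots,P_n)$, and cancel the remaining alternating terms of $K^n\circ\delta^n$ against those of $\delta^{n-1}\circ K^{n-1}$ via the reindexing of $\Delta_{n+1}(P_0,\ldots,P_n)$ against the collections $\Delta_n(P_0,\ldots,\widehat{P_i},\ldots,P_n)$ supplied by the Construction. Your explicit attention to the sign bookkeeping, the nerve-plot inclusions, and the extension-by-zero/local-finiteness issues matches (and if anything slightly elaborates) what the paper does.
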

\begin{proof}
	For every $ \omega\in C^n(X,\mathcal{C},\Lambda^k) $ and $ n $-simplex $ (P_0,\ldots,P_n) $ with the nerve plot $ Q $, we have
	\begin{align*}
		K^n\circ\delta^n(\omega)&(P_0,\ldots,P_n)=\displaystyle\sum_{\Delta_{n+1}(P_0,\ldots,P_{n})} Q_{(P_{\alpha},P_0,\ldots,P_{n})}^*(f_{\alpha})~~~~~~(\delta^n\omega)(P_{\alpha},P_0,\ldots,P_{n})\\
		&= \displaystyle\sum_{\Delta_{n+1}(P_0,\ldots,P_{n})} Q_{(P_{\alpha},P_0,\ldots,P_{n})}^*(f_{\alpha})~~~\Big{(}\imath_{\alpha}^*\omega(P_0,\ldots,P_n)-\sum_{i=0}^{n}(-1)^i~(F_{i}\circ \imath_{\alpha})^*~ \omega (P_{\alpha},P_0,\ldots,\widehat{P_i},\ldots,P_{n})\Big{)}\\
		&= \displaystyle\sum_{\Delta_{n+1}(P_0,\ldots,P_{n})} Q_{(P_{\alpha},P_0,\ldots,P_{n})}^*(f_{\alpha}) \imath_{\alpha}^*\omega(P_0,\ldots,P_n)\\
		&-\sum_{\Delta_{n+1}(P_0,\ldots,P_{n})} Q_{(P_{\alpha},P_0,\ldots,P_{n})}^*(f_{\alpha})\sum_{i=0}^{n}(-1)^i~(F_{i}\circ \imath_{\alpha})^*~ \omega (P_{\alpha},P_0,\ldots,\widehat{P_i},\ldots,P_{n})\\
		&= \omega(P_0,\ldots,P_n)-\sum_{\Delta_{n+1}(P_0,\ldots,P_{n})}\sum_{i=0}^{n}(-1)^i~~ Q_{(P_{\alpha},P_0,\ldots,P_{n})}^*(f_{\alpha})~(F_{i}\circ \imath_{\alpha})^*~ \omega (P_{\alpha},P_0,\ldots,\widehat{P_i},\ldots,P_{n}).
	\end{align*}
	On the other hand, 
	\begin{align*}
		\delta^{n-1}\circ K^{n-1}&(\omega)(P_0,\ldots,P_n)=\displaystyle\sum_{i=0}^{n}(-1)^i~F_i^*~ \Big{(}(K^{n-1}\omega) (P_0,\ldots,\widehat{P_i},\ldots,P_{n})\Big{)}\\
		&=\displaystyle\sum_{i=0}^{n}(-1)^i~F_i^*~ \Big{(}\sum_{\Delta_{n}(P_0,\ldots,\widehat{P_i},\ldots,P_{n})} Q_{P_{\alpha},P_0,\ldots,\widehat{P_i},\ldots,P_{n}}^*(f_{\alpha})~~~\omega(P_{\alpha},P_0,\ldots,\widehat{P_i},\ldots,P_{n})\Big{)}\\
		&=\displaystyle\sum_{i=0}^{n} \sum_{\Delta_{n}(P_0,\ldots,\widehat{P_i},\ldots,P_{n})} (-1)^i~(F_{i}\circ \imath_{\alpha})^*\Big{(}Q_{P_{\alpha},P_0,\ldots,\widehat{P_i},\ldots,P_{n}}^*(f_{\alpha})\Big{)}~~~(F_{i}\circ \imath_{\alpha})^*\omega(P_{\alpha},P_0,\ldots,\widehat{P_i},\ldots,P_{n})\\
		&=\displaystyle\sum_{i=0}^{n}\sum_{\Delta_{n+1}(P_0,\ldots,P_{n})}(-1)^i~~ Q_{(P_{\alpha},P_0,\ldots,P_{n})}^*(f_{\alpha})~(F_{i}\circ \imath_{\alpha})^*~ \omega (P_{\alpha},P_0,\ldots,\widehat{P_i},\ldots,P_{n}).
	\end{align*}
	Therefore,
	\begin{center}
		$ 	K^n\circ\delta^n(\omega)(P_0,\ldots,P_n)+\delta^{n-1}\circ K^{n-1}(\omega)(P_0,\ldots,P_n)=\omega(P_0,\ldots,P_n) $
	\end{center}
\end{proof}
Now the following result is immediate:
\begin{proposition}\label{prop-mv}  
	(The generalized Mayer-Vietoris sequence).
	Let $ X $ be a diffeological space and let $ \mathcal{C} $ be a covering generating family for $ X $.
	In the situation of Setup \ref{setting-1}, if there exists a partition of unity subordinate to $\mathcal{C}$,
	then the sequence 
	\begin{align*}
		0\longrightarrow \Omega^{k}(X)&
		\stackrel{r}{\longrightarrow}C^0(X,\mathcal{C},\Lambda^k)
		\stackrel{\delta}{\longrightarrow}C^1(X,\mathcal{C},\Lambda^k)
		\stackrel{\delta}{\longrightarrow}C^2(X,\mathcal{C},\Lambda^k)
		\longrightarrow \cdots
	\end{align*}
	is exact, or equivalently, $ H^n(X,\mathcal{C};\Lambda^k)=0 $.
\end{proposition}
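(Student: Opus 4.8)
The plan is to deduce everything from the chain homotopy of the preceding proposition, supplemented by the sheaf property of $\Lambda^k$; the asserted exactness separates into the interior positions $C^n$ with $n\geq 1$ and the augmented tail at $\Omega^k(X)$ and $C^0$.

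For the interior positions I would argue directly. The preceding proposition supplies operators $K^n$ with $K^n\circ\delta^n+\delta^{n-1}\circ K^{n-1}=\mathrm{id}$, valid for $n\geq 1$. Given a cocycle $\omega\in C^n(X,\mathcal{C},\Lambda^k)$ with $\delta^n\omega=0$, applying this identity yields $\omega=K^n(\delta^n\omega)+\delta^{n-1}(K^{n-1}\omega)=\delta^{n-1}(K^{n-1}\omega)$, so $\omega$ is a coboundary. Hence $\ker\delta^n=\mathrm{im}\,\delta^{n-1}$ and $H^n(X,\mathcal{C};\Lambda^k)=0$ for every $n\geq 1$. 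This is the ``immediate'' half, resting entirely on the existence of the $K^n$, which Setup \ref{setting-1} together with the subordinate partition of unity guarantees.

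For the augmented tail I would invoke the sheaf property. By construction, $r$ sends a global form to the family of its values $(\omega(P))_{P\in\mathcal{C}}$, that is, $r$ is the canonical morphism $\alpha_{\mathcal{C}}\colon\Sigma\Lambda^k(X)\to\Sigma\Lambda^k(\mathcal{C})$ followed by the inclusion $\Sigma\Lambda^k(\mathcal{C})=\ker\delta^0\hookrightarrow C^0$, using the identification $H^0(X,\mathcal{C};A)\cong\Sigma A(\mathcal{C})$ recorded just before Proposition \ref{p-0th}. Since $\Lambda^k$ is a sheaf it is a quasi-sheaf, so $\alpha_{\mathcal{C}}$ is an isomorphism onto $\ker\delta^0$. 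This gives at once the injectivity of $r$ (exactness at $\Omega^k(X)$) and the equality $\mathrm{im}\,r=\ker\delta^0$ (exactness at $C^0$).

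The one point deserving care --- the obstacle, such as it is --- is precisely that the homotopy identity does not reach the tail: it holds only for $n\geq 1$, and the naive reading at $n=0$ would force $K^0\circ\delta^0=\mathrm{id}$, hence $H^0=0$, which is false. So the low-degree exactness cannot be extracted from $K^{\bullet}$ and must instead come from the quasi-sheaf property of $\Lambda^k$ as above. With the two regimes handled by these separate mechanisms, the augmented complex is exact throughout, which is the stated equivalence $H^n(X,\mathcal{C};\Lambda^k)=0$.
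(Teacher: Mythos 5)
Your proposal is correct and matches the paper's intended argument: the paper declares the proposition ``immediate'' from the homotopy identity $K^n\circ\delta^n+\delta^{n-1}\circ K^{n-1}=\mathrm{id}$ of the preceding proposition, which is exactly your treatment of the positions $n\geq 1$. Your handling of the tail --- deducing exactness at $\Omega^{k}(X)$ and at $C^0$ from the identification $\ker\delta^0\cong\Sigma\Lambda^k(\mathcal{C})$ together with the quasi-sheaf property of the sheaf $\Lambda^k$ --- correctly supplies the low-degree argument the paper leaves implicit, including the accurate observation that the homotopy identity only applies in degrees $n\geq 1$ and cannot be invoked at $n=0$.
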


\subsection{Abstract de Rham theorem}
\begin{definition} 
	A sheaf  $ A $ on a diffeological space $ X $ is \textbf{acyclic} if $ \check{H}^n(X;A)=0 $ for all $ n\geq 1 $.
\end{definition}

\begin{theorem}(Abstract de Rham theorem).
	Assume that
	\begin{center}
		$0\longrightarrow \mathcal{S} \longrightarrow A^{0}\stackrel{d_0}{\longrightarrow}A^{1}\stackrel{d_1}{\longrightarrow}A^{2}\longrightarrow\cdots $.
	\end{center}
	is a resolution of a sheaf $ \mathcal{S} $ on a diffeological space $ X $ in which $ A^{k} $ is an acyclic sheaf for all $ k\geq 1 $.
	Then 
	$ \check{H}^n(X;\mathcal{S}) $ and $ H^n(\Sigma A^{\bullet}) $ are isomorphic, where $ H^n(\Sigma A^{\bullet}) $ denotes the $ k $-th cohomology group of the associated cochain complex
	\begin{center}
		$ \Sigma A^{0}\stackrel{\Sigma d_0}{\longrightarrow} \Sigma A^{1}\stackrel{\Sigma d_1}{\longrightarrow}\Sigma A^{2}\longrightarrow\cdots $.
	\end{center}
\end{theorem}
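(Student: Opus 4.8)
The plan is to run the classical dimension-shifting argument, using the long exact sequence of Proposition \ref{p-long} for bookkeeping and letting the acyclicity hypothesis collapse it. First I would break the resolution into short exact sequences. For $k\geq 0$ set $Z^k=\ker d_k$, so that $Z^0=\mathcal{S}$ and, by exactness of the resolution, $Z^{k+1}=\mathrm{im}\,d_k$. Each $Z^k$ is the kernel of a morphism of sheaves, hence a sheaf, and one obtains short exact sequences of sheaves
$$0\longrightarrow Z^k\longrightarrow A^k\stackrel{d_k}{\longrightarrow}Z^{k+1}\longrightarrow 0,$$
exact in the plot-wise sense of the paper precisely because the original resolution is.

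For $n=0$ I would apply Proposition \ref{p-0th}: since $\mathcal{S}$ is a sheaf, $\check{H}^0(X;\mathcal{S})\cong\Sigma\mathcal{S}(X)$, and left-exactness of the section functor $\Sigma$ identifies $\Sigma\mathcal{S}(X)=\ker(\Sigma d_0)=H^0(\Sigma A^{\bullet})$. For $n\geq 1$ I would feed each short exact sequence above into Proposition \ref{p-long}. Because $A^k$ is acyclic, the groups $\check{H}^m(X;A^k)$ vanish for all $m\geq 1$, so the connecting map yields isomorphisms $\check{H}^m(X;Z^{k+1})\cong\check{H}^{m+1}(X;Z^k)$ for every $m\geq 1$. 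Iterating these produces the chain
$$\check{H}^n(X;\mathcal{S})=\check{H}^n(X;Z^0)\cong\check{H}^{n-1}(X;Z^1)\cong\cdots\cong\check{H}^1(X;Z^{n-1}).$$

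It remains to compute the last term, and here I would use the low-degree part of the long exact sequence attached to $0\to Z^{n-1}\to A^{n-1}\to Z^n\to 0$, namely
$$\Sigma A^{n-1}(X)\longrightarrow\Sigma Z^n(X)\stackrel{\partial}{\longrightarrow}\check{H}^1(X;Z^{n-1})\longrightarrow\check{H}^1(X;A^{n-1}).$$
The right-hand term vanishes by acyclicity, so $\partial$ exhibits $\check{H}^1(X;Z^{n-1})$ as the quotient of $\Sigma Z^n(X)$ by the image of the preceding map. Invoking left-exactness of $\Sigma$ once more to write $\Sigma Z^n(X)=\ker(\Sigma d_n)$ and to identify that image with $\mathrm{im}(\Sigma d_{n-1})$, this quotient is exactly $\ker(\Sigma d_n)/\mathrm{im}(\Sigma d_{n-1})=H^n(\Sigma A^{\bullet})$, which closes the chain of isomorphisms.

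The main obstacle I anticipate is not the homological algebra, which is routine once the pieces are assembled, but the verification that the sequences $0\to Z^k\to A^k\to Z^{k+1}\to 0$ are genuinely short exact sequences of sheaves in the precise plot-wise sense required by Proposition \ref{p-long}. Concretely, one must check that $Z^{k+1}=\ker d_{k+1}$ agrees with the image of $d_k$ up to the local-solvability clause in the paper's definition of exactness, i.e.\ that $d_k$ is locally surjective onto $Z^{k+1}$. This is exactly where the hypothesis that the resolution is plot-wise exact is essential, and it is the same subtlety that the image-presheaf device in the proof of Proposition \ref{p-long} was built to absorb; I would therefore lean on that proposition rather than re-examining the surjectivity by hand.
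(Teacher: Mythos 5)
Your proposal is correct and follows essentially the same route as the paper's proof: the same splitting of the resolution into short exact sequences $0\to Z^k\to A^k\to Z^{k+1}\to 0$, the same collapse of the long exact sequence of Proposition \ref{p-long} via acyclicity to get $\check{H}^n(X;\mathcal{S})\cong\check{H}^1(X;Z^{n-1})$, and the same low-degree identification of $\check{H}^1(X;Z^{n-1})$ with $\Sigma Z^n(X)/\mathrm{Im}(\Sigma d_{n-1})=H^n(\Sigma A^{\bullet})$. Your explicit handling of the case $n=0$ via Proposition \ref{p-0th} and your closing remark on plot-wise exactness of the short exact sequences are sensible elaborations of points the paper leaves implicit, not a departure from its argument.
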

\begin{proof}
	Set $ Z^0=\mathcal{S} $, and for $ k> 0 $, let $ Z^k $ be the sheaf $ \mathrm{ker}(d_k) $ so that we have a short exact sequence
	\begin{center}
		$ 0\longrightarrow Z^k\stackrel{\imath}{\longrightarrow} A^k\stackrel{d_k}{\longrightarrow} Z^{k+1}\longrightarrow 0 $
	\end{center}
	of sheaves, for all $ k\geq 0 $.
	By proposition \ref{p-long}, this gives rise to a long exact sequence
	\begin{align}\label{eq2}
		0\longrightarrow&\Sigma Z^k(X)\stackrel{\Sigma\imath}{\longrightarrow}\Sigma A^k(X)
		\stackrel{\Sigma d_k}{\longrightarrow}\Sigma Z^{k+1}(X)\stackrel{}{\longrightarrow}\check{H}^{1}(X;Z^k)
		\stackrel{}{\longrightarrow}\check{H}^1(X;A^k)
		\\
		\cdots	\longrightarrow& \check{H}^n(X;Z^k)
		\stackrel{}{\longrightarrow}\check{H}^n(X;A^k)
		\stackrel{}{\longrightarrow}\check{H}^n(X;Z^{k+1})
		\stackrel{\partial}{\longrightarrow}\check{H}^{n+1}(X;Z^k)
		\longrightarrow \cdots
	\end{align}
	Since  $ A^{k} $ is an acyclic sheaf for $ k\geq 1 $, 
	$ \partial:\check{H}^n(X;Z^{k+1})\longrightarrow \check{H}^{n+1}(X;Z^k) $
	are isomorphisms, for $ n\geq 1 $. Thus, inductively, the isomorphisms
	\begin{center}
		$ \check{H}^n(X;\mathcal{S})=\check{H}^n(X;Z^{0})\cong \check{H}^{1}(X;Z^{n-1}) $
	\end{center}
	are achieved. In view of (\ref{eq2}), we have an exact sequence
	\begin{center}
		$ 0\longrightarrow\Sigma Z^{n-1}(X)\stackrel{\Sigma\imath}{\longrightarrow}\Sigma A^{n-1}(X)
		\stackrel{\Sigma d_{n-1}}{\longrightarrow}\Sigma Z^{n}(X)\stackrel{}{\longrightarrow}\check{H}^{1}(X;Z^{n-1})
		\stackrel{}{\longrightarrow}0 $
	\end{center}
	which yields the isomorphisms
	\begin{center}
		$ \check{H}^n(X;\mathcal{S})\cong \check{H}^{1}(X;Z^{n-1})\cong \Sigma Z^{n}(X)/Im(\Sigma d_{n-1})\cong H^n(\Sigma A^{\bullet}) $
	\end{center}
	for $ n\geq 1 $.
\end{proof}
In view of Propositions \ref{prop-mv} and \ref{prop-res} and the abstract de Rham theorem, the following is immediate.

\begin{corollary}
	Let $ X $ be a diffeological space. In the situation of Setup \ref{setting-1},
	if for every covering generating family $\mathcal{C}$ of $ X $, there exists a partition of unity subordinate to $\mathcal{C}$,
	then the sheaf $ \Lambda^k $, $ k\geq 1 $,  of differential $ k $-forms on $ X $ is acyclic. In particular, we have
	$ \check H^n(X;\mathbb{R})\cong H_{dR}^n(X) $.
\end{corollary}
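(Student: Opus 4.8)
The plan is to read the statement as a formal consequence of three facts already at hand: the vanishing supplied by the generalized Mayer--Vietoris sequence, the de Rham resolution of the constant sheaf, and the abstract de Rham theorem. I would not expect to perform any new computation; the work is in chaining these results together with one short colimit argument.

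First I would prove that $\Lambda^k$ is acyclic for every $k\geq 1$. Fix such a $k$ and an integer $n\geq 1$. For an arbitrary $\mathcal{C}\in\mathsf{CGF}(X)$, the hypothesis furnishes a partition of unity subordinate to $\mathcal{C}$, so Proposition~\ref{prop-mv} applies and gives the exactness of the augmented complex $0\to\Omega^k(X)\xrightarrow{r}C^0(X,\mathcal{C},\Lambda^k)\xrightarrow{\delta}C^1(X,\mathcal{C},\Lambda^k)\to\cdots$; equivalently $H^n(X,\mathcal{C};\Lambda^k)=0$. Since $\check{H}^n(X;\Lambda^k)=\varinjlim_{\mathcal{C}}H^n(X,\mathcal{C};\Lambda^k)$ and every object of this directed system is the zero group, the colimit is zero, i.e.\ $\check{H}^n(X;\Lambda^k)=0$ for all $n\geq 1$. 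This is exactly the acyclicity of $\Lambda^k$.

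Next I would feed this into the abstract de Rham theorem. By Proposition~\ref{prop-res} the sequence $0\to\underline{\mathbb{R}}\to\Lambda^0\xrightarrow{d}\Lambda^1\xrightarrow{d}\Lambda^2\to\cdots$ is a resolution of the constant sheaf $\underline{\mathbb{R}}$, and by the previous paragraph each $\Lambda^k$ with $k\geq 1$ is acyclic. Taking $\mathcal{S}=\underline{\mathbb{R}}$ and $A^k=\Lambda^k$ in the abstract de Rham theorem yields $\check{H}^n(X;\underline{\mathbb{R}})\cong H^n(\Sigma\Lambda^\bullet)$ for all $n$. Finally, the proposition identifying the associated cochain complex $(\Sigma\Lambda^\bullet,\Sigma d)$ with the complex of differential forms gives $H^n(\Sigma\Lambda^\bullet)=H^n_{dR}(X)$, whence $\check{H}^n(X;\mathbb{R})\cong H^n_{dR}(X)$, with $\mathbb{R}$ understood as the constant sheaf $\underline{\mathbb{R}}$.

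The one point that deserves care, and which I regard as the main obstacle, is the interpretation and uniformity of the hypotheses in the first step. Proposition~\ref{prop-mv} is stated for a single covering generating family satisfying Setup~\ref{setting-1}, whereas the acyclicity I need is a colimit over all of $\mathsf{CGF}(X)$. I must therefore be sure that the structure of Setup~\ref{setting-1}---the locally extendable systems $\Delta_0(\alpha,Q)$ governing $\mathrm{Mor}_{loc}(\mathcal{C})$---together with a subordinate partition of unity is available for every $\mathcal{C}\in\mathsf{CGF}(X)$, so that $H^n(X,\mathcal{C};\Lambda^k)=0$ holds for each term of the directed system and not merely for a distinguished cover. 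Once this uniformity is secured, the passage to the colimit is immediate, since a directed colimit of zero groups is zero, and the remaining steps are a direct invocation of the cited results.
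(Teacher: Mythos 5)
Your proof is correct and is exactly the paper's argument: the paper derives the corollary as ``immediate'' from Proposition \ref{prop-mv} (vanishing $H^n(X,\mathcal{C};\Lambda^k)=0$ for each covering generating family, hence $\check{H}^n(X;\Lambda^k)=\varinjlim_{\mathcal{C}}H^n(X,\mathcal{C};\Lambda^k)=0$), Proposition \ref{prop-res} (the de Rham resolution of $\underline{\mathbb{R}}$), and the abstract de Rham theorem, which is precisely your chain. The uniformity point you flag is settled by reading the hypothesis as the paper intends: the data of Setup \ref{setting-1} together with a subordinate partition of unity are assumed available for \emph{every} $\mathcal{C}\in\mathsf{CGF}(X)$, which is what makes every term of the directed system vanish.
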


\subsection{The case of diffeological \'{e}tale manifolds}
Let us first review the definition of diffeological \'{e}tale manifolds (see \cite{ARA} for details).
\begin{definition}
	A map $ f:X\rightarrow Y $ between diffeological spaces is \textbf{\'{e}tale} if for every $ x $ in $ X $, there are D-open neighborhoods $ O\subseteq X $ and $ V\subseteq Y $ of $ x $ and $ f(x) $, respectively, such that $ f|_O:O\rightarrow O' $ is a diffeomorphism.  
	A smooth map $ f:X\rightarrow Y $ is a \textbf{diffeological \'{e}tale map} if the pullback $ P^{*}f $ by every plot $ P $ in $ X $ is  \'{e}tale. 
\end{definition}
Any diffeological \'{e}tale map  is a D-open map. 
\begin{proposition}
	A smooth map $ f:X\rightarrow Y $ is a diffeological \'etale map if and only if for given $ x\in X $, any plot $ P:U\rightarrow Y $, and each $ r\in U $ with $ P(r)=f(x)  $, there exists a unique (up to germs of plots)  local lift plot $ L:V\rightarrow X $ defined on an open neighborhood $ V\subseteq U $ of $ r $ such that $ f\circ L=P|_V $ and $ L(r)=x $.
\end{proposition}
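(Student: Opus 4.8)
The plan is to translate the \'etale condition into a statement about the pullbacks $P^{*}f:P^{*}X\rightarrow U$ along plots $P:U\rightarrow Y$, and then to exploit the fact that an \'etale map over a domain admits \emph{unique} local smooth sections. Recall that $P^{*}X=\{(u,x')\in U\times X\mid P(u)=f(x')\}$, that $P^{*}f$ is the restriction of the first projection, that $P_{\#}=\Pr_{2}|_{P^{*}X}:P^{*}X\rightarrow X$ is the natural map, and that the pullback square gives $f\circ P_{\#}=P\circ P^{*}f$. The bridge between the two formulations is the correspondence between local lifts of $P$ through $f$ centered at $(r,x)$ and local sections of $P^{*}f$ through $(r,x)$: a lift $L:V\rightarrow X$ yields the section $s_{L}:V\rightarrow P^{*}X$, $s_{L}(u)=(u,L(u))$, and a section $\sigma$ yields the lift $P_{\#}\circ\sigma$. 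I will also use the elementary fact that two local smooth sections of an \'etale map that agree at one point agree near it: close to the common value $\pi|_{O}$ is a diffeomorphism onto a D-open set, hence injective, so both sections coincide with $(\pi|_{O})^{-1}$ on a neighborhood.

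For the forward implication, assume $f$ is diffeological \'etale and fix $P$, $x$, $r$ as in the statement, so that $(r,x)\in P^{*}X$. Since $P^{*}f$ is \'etale at $(r,x)$, there are D-open neighborhoods $O\ni(r,x)$ in $P^{*}X$ and $W\ni r$ in $U$ with $P^{*}f|_{O}:O\rightarrow W$ a diffeomorphism; put $\sigma=(P^{*}f|_{O})^{-1}$ and $L=P_{\#}\circ\sigma:W\rightarrow X$. Then $L$ is smooth, $f\circ L=f\circ P_{\#}\circ\sigma=P\circ P^{*}f\circ\sigma=P|_{W}$, and $L(r)=P_{\#}(r,x)=x$, giving existence. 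Uniqueness up to germs follows by passing any two lifts $L_{1},L_{2}$ to the sections $s_{L_{1}},s_{L_{2}}$ of $P^{*}f$ through $(r,x)$ and invoking the section-uniqueness fact recorded above.

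For the converse, assume the unique lifting property and fix a plot $P:U\rightarrow Y$ together with a point $(r,x)\in P^{*}X$; I must exhibit $P^{*}f$ as \'etale at $(r,x)$. The lifting property produces a lift $L:V\rightarrow X$ with $f\circ L=P|_{V}$ and $L(r)=x$; set $s(u)=(u,L(u))$ and $O=s(V)$. Since $P^{*}f\circ s=\mathrm{id}_{V}$ and $s$ is injective, $P^{*}f|_{O}:O\rightarrow V$ is a bijection with inverse $s$, and both maps are smooth as restrictions and co-restrictions of smooth maps to subspaces. The crux is to show that $O$ is D-open in $P^{*}X$. Given any plot $R=(a,b):W'\rightarrow P^{*}X$, so that $a:W'\rightarrow U$ is smooth, $b$ is a plot in $X$, and $P\circ a=f\circ b$, one has $R^{-1}(O)=\{w\mid a(w)\in V,\ b(w)=L(a(w))\}$. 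Fixing $w_{0}\in R^{-1}(O)$, I apply the lifting property to the plot $P\circ a:W'\rightarrow Y$ at $w_{0}$: both $b$ and $L\circ a$ (defined on the open set $a^{-1}(V)$) are lifts of $P\circ a$ assuming the common value $b(w_{0})=L(a(w_{0}))$ at $w_{0}$, so by uniqueness they agree on a neighborhood of $w_{0}$. Hence $R^{-1}(O)$ contains a neighborhood of each of its points, so $O$ is D-open and $P^{*}f|_{O}:O\rightarrow V$ witnesses $P^{*}f$ as \'etale at $(r,x)$.

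The main obstacle is exactly the D-openness of $O=s(V)$ in the converse: a lift yields only a smooth section, whose image need not be open a priori, and it is precisely the \emph{uniqueness} clause of the hypothesis---applied to the composite plots $P\circ a$ for arbitrary test plots $R=(a,b)$ into $P^{*}X$---that forces $b$ to coincide with $L\circ a$ near every point mapping into $O$, thereby opening $R^{-1}(O)$. The forward direction rests on the companion fact that \'etale maps have unique local sections, which I would record (or cite) as a short lemma before the main argument.
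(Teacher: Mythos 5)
Your proof is correct. Note that the paper itself states this proposition without proof: it is recalled from the cited reference \cite{ARA}, so there is no in-paper argument to compare against, and your write-up stands as a self-contained justification. Both directions are sound. In the forward direction, the dictionary between local lifts of $P$ through $f$ and local sections of $P^{*}f$, combined with the observation that two smooth sections of an \'etale map agreeing at a point agree near it (using D-continuity of smooth maps to pull back the D-open set $O$ on which $P^{*}f$ is invertible), gives both existence and germ-uniqueness. In the converse, you correctly identify the only nontrivial point --- D-openness of $s(V)$ in $P^{*}X$ --- and your mechanism for it is exactly right: a plot in the subspace $P^{*}X\subseteq U\times X$ is a pair $R=(a,b)$ with $P\circ a=f\circ b$, and applying the \emph{uniqueness} clause of the hypothesis to the plot $P\circ a$ at a point $w_{0}\in R^{-1}(O)$ forces $b$ to coincide with $L\circ a$ on a neighborhood of $w_{0}$, so $R^{-1}(O)$ is open. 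This is the step a naive attempt would miss (a smooth section need not have open image in general), and you have handled it precisely where the unique-lifting hypothesis is indispensable.
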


\begin{definition} 
	A diffeological space $ \mathcal{M} $ is said to be a \textbf{diffeological \'{e}tale $ n $-manifold} if there exists a parametrized cover $ \mathfrak{A} $, called an \textbf{atlas}, for $ \mathcal{M} $ consisting of diffeological \'{e}tale maps 
	from $ n $-domains into $ \mathcal{M} $.
	We call the elements of $ \mathfrak{A} $ \textbf{diffeological \'{e}tale charts}.
	In this situation, $ \mathfrak{A} $ is actually a covering generating family for $ \mathcal{M} $.
\end{definition}

If $ \varphi:U\rightarrow \mathcal{M} $ are $ \psi:V\rightarrow \mathcal{M} $ are two diffeological \'{e}tale charts in $ \mathcal{M} $ with $ \varphi(r)=\psi(s) $, then one can find a unique smooth map $ h:U'\rightarrow V $ defined on an open neighborhood $ U'\subseteq U $ of $ r $ such that $ h(r)=s $ and the following diagram commutes:
\begin{displaymath}
	\xymatrix{
		&  \mathcal{M}  & \\
		U' \ar[ur]^{\varphi|_{U'}}\ar[rr]_{h} & & V\ar[ul]_{\psi}  \\
	}
\end{displaymath}
$ h $ is an \'{e}tale map and uniquely determined by diffeological \'{e}tale charts $ \varphi $ are $ \psi $. 

\begin{example}\label{exa-1}
	Any diffeological manifold is a diffeological \'{e}tale manifold.
	The irrational torus $ \mathbb{T}_{\alpha}=\mathbb{R}/(\mathbb{Z}+\alpha\mathbb{Z}) $,  $ \alpha\notin\mathbb{Q}  $,  is a  diffeological \'{e}tale $ 1 $-manifold.
\end{example}

Assume that $ \mathcal{M} $ is a diffeological \'{e}tale manifold along with an atlas $ \mathfrak{A} $.
First, we observe how  $ \mathcal{M} $ along with an atlas $ \mathfrak{A}=\{\varphi_{\alpha}:U_{\alpha}\rightarrow\mathcal{M}\}_{\alpha\in I} $ fulfills the requirements of Setup \ref{setting-1}: 

For each  $ \alpha\in I $, choose a selection\footnote{Thanks to the axiom of choice.} $ \tau_{\alpha}:\mathrm{Im}(\varphi_{\alpha})\rightarrow U_{\alpha}  $ of $ \varphi_{\alpha} $ so that
$ \varphi_{\alpha}\circ\tau_{\alpha}=\mathrm{id} $.
Let $ \alpha\in I $, $ Q $ be an arbitrary plot in $ \mathcal{M} $, and $ r\in\mathrm{dom}(Q) $. 
If $ Q(r)\notin\mathrm{Im}(\varphi_{\alpha}) $, take the morphism $ \varnothing\rightarrow\varphi_{\alpha} $ from the empty plot.
But if $ Q(r)\in\mathrm{Im}(\varphi_{\alpha}) $, then there exists a smooth map $ G_{\alpha r}:V_{\alpha r}\rightarrow U_{\alpha} $ defined on an open neighborhood $ V_{\alpha r}\subseteq\mathrm{dom}(Q) $ of $ r $
such that $ G_{\alpha r}(r)=\tau_{\alpha}(Q(r)) $, which is  unique with this property (up to germs). This gives us a morphism $ Q_{\alpha r}\stackrel{G_{\alpha r}}{\longrightarrow} \varphi_{\alpha} $, where $ Q_{\alpha r} $ is  the restriction of $ Q $ to $ V_{\alpha r} $. Let $ \Delta_0(\alpha,Q) $ be the collection of  morphisms chosen in this manner.
Let 	 $ \Delta_0(Q)=\bigcup_{ \alpha\in I }\Delta_0(\alpha,Q) $. Since $ \mathfrak{A} $ is a parametrized cover of $ \mathcal{M} $, the collection
$ \{\mathrm{dom}(Q_{\alpha r})\}_{\alpha\in I, r\in\mathrm{dom}(Q) } $ covers $ \mathrm{dom}(Q) $.

To observe that morphisms of plots are  locally extendable  on $ \mathrm{Mor}_{loc}(\mathfrak{A})=\bigcup_{Q\in\mathcal{D}_{\mathcal{M}}}\Delta_0(Q) $, suppose we are given morphisms $ Q\stackrel{G}{\longrightarrow} \varphi_{\alpha}  $ and $ Q\stackrel{F}{\longrightarrow} Q' $ of plots in $ \mathcal{M} $ with $ Q\stackrel{G}{\longrightarrow} \varphi_{\alpha}  $ belonging to $ \Delta_0(Q) $. Let $ r\in\mathrm{dom}(Q) $ so that $ G(r)=\tau_{\alpha}(Q(r)) $.
Then for $ F(r)\in\mathrm{dom}(Q') $, take the element $ Q'_{\alpha F(r)}\stackrel{G'_{\alpha F(r)}}{\longrightarrow} \varphi_{\alpha} $ of $ \Delta_0(Q') $  for which $ G'_{\alpha F(r)}(F(r))=\tau_{\alpha}(Q'(F(r)))=\tau_{\alpha}(Q(r)) $.
By uniqueness, we get $ G'_{\alpha F(r)}\circ F|_W=G|_W $ on some open neighborhood $ W\subseteq\mathrm{dom}(Q) $ of $ r $.

Now in view of Proposition \ref{prop-mv}, we obtain the following results:
\begin{proposition}
	Assume that $ \mathcal{M} $ is a diffeological \'{e}tale manifold along with an atlas $ \mathfrak{A} $.
	If there exists a partition of unity subordinate to $\mathfrak{A}$,
	then the sequence 
	\begin{align*}
		0\longrightarrow \Omega^{k}(\mathcal{M})&
		\stackrel{r}{\longrightarrow}C^0(\mathcal{M},\mathfrak{A},\Lambda^k)
		\stackrel{\delta}{\longrightarrow}C^1(\mathcal{M},\mathfrak{A},\Lambda^k)
		\stackrel{\delta}{\longrightarrow}C^2(\mathcal{M},\mathfrak{A},\Lambda^k)
		\longrightarrow \cdots
	\end{align*}
	is exact, or equivalently, $ H^n(\mathcal{M},\mathfrak{A};\Lambda^k)=0 $.
\end{proposition}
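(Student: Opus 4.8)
The plan is to recognize that this proposition is an immediate specialization of the generalized Mayer-Vietoris sequence (Proposition \ref{prop-mv}) to the pair $(X,\mathcal{C})=(\mathcal{M},\mathfrak{A})$. The only genuine content is to certify that the hypotheses of Setup \ref{setting-1} hold for a diffeological \'etale manifold equipped with an atlas, and that verification has already been carried out in the discussion immediately preceding this statement.

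First I would recall the construction from that discussion. For each chart $\varphi_{\alpha}:U_{\alpha}\rightarrow\mathcal{M}$ one fixes a selection $\tau_{\alpha}$ with $\varphi_{\alpha}\circ\tau_{\alpha}=\mathrm{id}$, and for each plot $Q$ and each $r\in\mathrm{dom}(Q)$ with $Q(r)\in\mathrm{Im}(\varphi_{\alpha})$ the \'etale lifting property produces a unique germ of a morphism $Q_{\alpha r}\stackrel{G_{\alpha r}}{\longrightarrow}\varphi_{\alpha}$ with $G_{\alpha r}(r)=\tau_{\alpha}(Q(r))$. Collecting these into $\Delta_0(\alpha,Q)$ and $\Delta_0(Q)=\bigcup_{\alpha}\Delta_0(\alpha,Q)$, the covering condition on the domains holds because $\mathfrak{A}$ is a parametrized cover of $\mathcal{M}$, while the local extendability on $\mathrm{Mor}_{loc}(\mathfrak{A})$ follows from uniqueness of \'etale lifts: given $Q\stackrel{G}{\longrightarrow}\varphi_{\alpha}$ in $\Delta_0(Q)$ and $Q\stackrel{F}{\longrightarrow}Q'$, the element $Q'_{\alpha F(r)}\stackrel{G'_{\alpha F(r)}}{\longrightarrow}\varphi_{\alpha}$ of $\Delta_0(Q')$ agrees with $G$ after precomposition with $F$ on a neighborhood of $r$. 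This places $\mathcal{M}$ together with $\mathfrak{A}$ squarely within the situation of Setup \ref{setting-1}.

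Next I would invoke the standing hypothesis that a partition of unity subordinate to $\mathfrak{A}$ exists, so that every assumption of Proposition \ref{prop-mv} is met for $(\mathcal{M},\mathfrak{A})$. The conclusion of that proposition is exactly the exactness of the displayed augmented \v{C}ech complex, equivalently the vanishing $H^n(\mathcal{M},\mathfrak{A};\Lambda^k)=0$ for $n\geq 1$ together with the identification of $\ker\delta^0$ with $\Omega^k(\mathcal{M})$ via the restriction map $r$. This is precisely the desired statement, so the proof reduces to this citation.

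In fact there is essentially no obstacle remaining. The contracting homotopy $K^n$ assembled from the partition of unity in the construction preceding Proposition \ref{prop-mv} carries all the analytic weight, and the only ingredient special to the \'etale setting is the verification of Setup \ref{setting-1}, dispatched in the second paragraph. The one point worth a second glance is that the \'etale lifting property genuinely yields \emph{locally extendable} morphisms in the sense of the setup; but this is exactly the uniqueness of germs of lifts used above, and requires no additional computation.
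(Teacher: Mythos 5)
Your proposal is correct and follows exactly the paper's route: the paper also deduces this proposition directly from Proposition \ref{prop-mv}, with the only substantive content being the verification of Setup \ref{setting-1} for $(\mathcal{M},\mathfrak{A})$ via the selections $\tau_{\alpha}$ and the uniqueness (up to germs) of \'etale lifts, which is carried out in the discussion immediately preceding the statement. Your recapitulation of that verification, together with the partition-of-unity hypothesis, matches the paper's argument in full.
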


\begin{proposition}
	Let $ (\mathcal{M},\mathfrak{A}) $ be a diffeological \'{e}tale manifold.
	If for every atlas $\mathfrak{A}'$ of $ X $, there exists a partition of unity subordinate to $\mathfrak{A}'$,
	then 
	$ \check H^n(\mathcal{M};\mathbb{R})\cong H_{dR}^n(\mathcal{M}) $.
\end{proposition}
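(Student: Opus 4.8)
The plan is to reduce the statement to the abstract de Rham theorem by proving that the sheaves $\Lambda^k$ of differential $k$-forms are acyclic for $k\geq 1$, and then to feed the resolution of Proposition \ref{prop-res} into that theorem. Concretely, once I know $\check H^n(\mathcal{M};\Lambda^k)=0$ for all $n\geq 1$ and all $k\geq 1$, the resolution $0\to\underline{\mathbb{R}}\to\Lambda^0\to\Lambda^1\to\cdots$ of Proposition \ref{prop-res} has acyclic terms in positive degree, so the abstract de Rham theorem yields $\check H^n(\mathcal{M};\mathbb{R})\cong H^n(\Sigma\Lambda^{\bullet})$, and the latter is $H^n_{dR}(\mathcal{M})$ since $(\Sigma\Lambda^{\bullet},\Sigma d)$ is the de Rham complex. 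Thus the entire proof rests on establishing the acyclicity of $\Lambda^k$.

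The key step, and the main obstacle, is the cofinality observation that \emph{every} covering generating family of $\mathcal{M}$ is refined by an atlas; this is what lets us use a hypothesis formulated only over atlases to control a colimit taken over all of $\mathsf{CGF}(\mathcal{M})$. Given $\mathcal{C}\in\mathsf{CGF}(\mathcal{M})$ and the atlas $\mathfrak{A}=\{\varphi_{\alpha}:U_{\alpha}\to\mathcal{M}\}_{\alpha\in I}$, each chart $\varphi_{\alpha}$ is a plot, hence lies in $\langle\mathcal{C}\rangle=\mathcal{D}_{\mathcal{M}}$. By the definition of the generated diffeology, $\varphi_{\alpha}$ is the supremum of a compatible family of elements of $\lfloor\mathcal{C}\rfloor$; that is, there is an open cover $\{V_i^{\alpha}\}_i$ of $U_{\alpha}$ together with elements $P_i^{\alpha}\in\mathcal{C}$ and smooth maps $F_i^{\alpha}$ with $\varphi_{\alpha}|_{V_i^{\alpha}}=P_i^{\alpha}\circ F_i^{\alpha}$. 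I would then set $\mathfrak{A}'=\{\varphi_{\alpha}|_{V_i^{\alpha}}\}_{\alpha,i}$. Each member is the restriction of a diffeological \'etale map to a D-open subset of its domain, hence again a diffeological \'etale chart, and these charts cover $\mathcal{M}$; therefore $\mathfrak{A}'$ is an atlas and, in particular, a covering generating family. The factorizations $\varphi_{\alpha}|_{V_i^{\alpha}}=P_i^{\alpha}\circ F_i^{\alpha}$ supply the morphisms $\varphi_{\alpha}|_{V_i^{\alpha}}\stackrel{F_i^{\alpha}}{\longrightarrow}P_i^{\alpha}$ witnessing $\mathfrak{A}'\prec\mathcal{C}$. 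This is precisely where care is needed: one must confirm that restricting an \'etale chart to an open piece of its domain preserves the \'etale property, and that the generated-diffeology decomposition produces exactly the refining morphisms into $\mathcal{C}$.

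With cofinality in hand I would assemble the conclusion as follows. Any atlas $\mathfrak{A}'$ places $\mathcal{M}$ in the situation of Setup \ref{setting-1}, exactly as verified for $\mathfrak{A}$ in the discussion preceding this proposition, and by hypothesis $\mathfrak{A}'$ admits a subordinate partition of unity; hence Proposition \ref{prop-mv} gives $H^n(\mathcal{M},\mathfrak{A}';\Lambda^k)=0$ for all $n\geq 1$ and $k\geq 1$. Now fix a class in $\check H^n(\mathcal{M};\Lambda^k)=\varinjlim_{\mathcal{C}}H^n(\mathcal{M},\mathcal{C};\Lambda^k)$; it is represented over some $\mathcal{C}$, and choosing an atlas $\mathfrak{A}'\prec\mathcal{C}$ as above, its image under the refinement map into $H^n(\mathcal{M},\mathfrak{A}';\Lambda^k)$ vanishes, since this group is zero. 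As $\mathfrak{A}'$ lies further along in the directed system, the class is already zero in the colimit. This shows $\check H^n(\mathcal{M};\Lambda^k)=0$ for $n\geq 1$ and $k\geq 1$, i.e. each $\Lambda^k$ with $k\geq 1$ is acyclic. Combining this with Proposition \ref{prop-res} and the abstract de Rham theorem, as outlined in the first paragraph, delivers $\check H^n(\mathcal{M};\mathbb{R})\cong H^n_{dR}(\mathcal{M})$.
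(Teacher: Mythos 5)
Your proposal is correct and takes essentially the same route as the paper: restrict the charts of $\mathfrak{A}$ to produce an atlas refining any given covering generating family, use the partition-of-unity hypothesis together with Proposition \ref{prop-mv} to kill the cohomology over atlases, deduce acyclicity of $\Lambda^k$ for $k\geq 1$, and conclude via Proposition \ref{prop-res} and the abstract de Rham theorem. The only difference is cosmetic: the paper builds an atlas refining \emph{two} covering generating families simultaneously (which also settles directedness of the refinement system), whereas you refine a single one and invoke representability of colimit classes; your version is airtight anyway, since the colimit of abelian groups is generated by the canonical images $\gamma_{\mathcal{C}}(c)$, each of which you show vanishes.
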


\begin{proof}
	We first notice that for any two covering generating families $ \mathcal{C} $ and $ \mathcal{C}' $ of $ \mathcal{M} $, we can construct an atlas $\mathfrak{B}$ of $ \mathcal{M} $, consisting of restrictions of elements of the atlas $ \mathfrak{A} $, which refines both $ \mathcal{C} $ and $ \mathcal{C}' $. 
	Therefore, the sheaf $ \Lambda^k $, $ k\geq 1 $,  of differential $ k $-forms on $ X $ is acyclic, and 	$ \check H^n(\mathcal{M};\mathbb{R})\cong H_{dR}^n(\mathcal{M}) $.
\end{proof}

\begin{corollary} 
	If $ M $ is a usual manifold,
	then
	$ \check H^n(M;\mathbb{R})\cong H_{dR}^n(M) $.
\end{corollary}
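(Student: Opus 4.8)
The plan is to realize $M$ as a diffeological étale manifold and then to verify, for every atlas, the partition-of-unity hypothesis of the preceding proposition; the desired isomorphism follows immediately once that hypothesis is in place.

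First I would give $M$ its standard diffeology, whose plots are precisely the smooth maps from domains into $M$, so that $M$ becomes a diffeological manifold. By Example \ref{exa-1}, every diffeological manifold is a diffeological étale manifold, and an ordinary smooth atlas of $M$ furnishes an atlas $\mathfrak{A}$ of diffeological étale charts, each chart being a diffeomorphism from an $n$-domain onto an open subset of $M$. Thus $(M,\mathfrak{A})$ fits the framework of the preceding proposition.

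Next I would check its hypothesis: for an arbitrary atlas $\mathfrak{A}'=\{\varphi_{\alpha}:U_{\alpha}\rightarrow M\}_{\alpha\in I}$ of diffeological étale charts, there exists a smooth partition of unity subordinate to $\mathfrak{A}'$ in the sense of Definition \ref{def-pu}. Since any diffeological étale map is D-open and the D-topology of a usual manifold coincides with its manifold topology, the images $\{\mathrm{Im}(\varphi_{\alpha})\}_{\alpha\in I}$ constitute an open cover of $M$ in the usual topology. By paracompactness of $M$, there is an ordinary smooth partition of unity $\{f_{\alpha}:M\rightarrow[0,1]\}_{\alpha\in I}$ subordinate to this cover; the Example following Definition \ref{def-pu} then upgrades it, via pullbacks along plots, to a smooth partition of unity subordinate to $\mathfrak{A}'$ in the diffeological sense.

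With the hypothesis verified for every atlas, the preceding proposition applies and yields $\check H^n(M;\mathbb{R})\cong H_{dR}^n(M)$. The only delicate point is the passage between the classical and the diffeological notions of a partition of unity subordinate to a cover, which is precisely what the Example after Definition \ref{def-pu} supplies; beyond this, the argument rests solely on paracompactness of $M$ and the identification of the D-topology with the manifold topology, so no substantive obstacle remains.
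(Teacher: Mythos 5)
Your proof is correct and follows essentially the same route as the paper: the paper's own proof is the one-line observation that every atlas of a usual manifold admits a subordinate partition of unity, which (together with Example \ref{exa-1} and the preceding proposition on diffeological \'etale manifolds) is exactly your argument. You merely fill in the details the paper leaves implicit --- D-openness of \'etale maps, the identification of the D-topology with the manifold topology, paracompactness, and the upgrade of a classical partition of unity to the diffeological sense via the example following Definition \ref{def-pu} --- all of which are consistent with the paper's intent.
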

\begin{proof}
	Since there always exists a partition of unity subordinate to each atlas of a usual manifold.
\end{proof}
\subsection{Diffeological \v{C}ech-de Rham double complex} 
Let $ X $ be a diffeological space
and let $\mathcal{C}$ be the covering generating family consisting of global plots in $ X $. 
Consider the double complex 
$ C^{\bullet,\bullet}(X) $ with
\begin{center}
	$ C^{n,k}(X)= C^n(X,\mathcal{C},\Lambda^k) $
\end{center}
in which $ \Lambda^k $ is the sheaf of differential $k$-forms, together with differentials 
\begin{center}
	$ \delta:C^{n,k}(X)\rightarrow C^{n+1,k}(X)\qquad$ 
	and 
	$\qquad d:C^{n,k}(X)\rightarrow C^{n,k+1}(X) $,
\end{center} 
where
$ d(c) (P_0,\ldots,P_n)=(-1)^n d_Q (c(P_0,\ldots,P_n))$ and $ d_Q $ is the exterior derivative defined on $ \mathrm{dom}(Q) $, for any $ n $-cochain $ c $ and $ n $-simplex $ (P_0,\ldots,P_n) $ with the nerve  plot $Q$.
%So $ d\circ\delta+\delta\circ d=0 $.
So we obtain the total complex $ C^{\bullet}(X) $ with 
\begin{center}
	$ C^m(X)=\displaystyle\bigoplus_{n+k=m}C^{n,k}(X) $
\end{center}
and differential 
$ D=\delta+d $.
\begin{displaymath}
	\xymatrix{
		\vdots & \vdots & \vdots &  \\
		C^0(X,\mathcal{C},\Lambda^2)\ar[r]^{\delta} \ar[u] & C^1(X,\mathcal{C},\Lambda^2)\ar[r]^{\delta} \ar[u] & C^2(X,\mathcal{C},\Lambda^2)\ar[r] \ar[u] & \cdots\\
		C^0(X,\mathcal{C},\Lambda^1)\ar[r]^{\delta} \ar[u]^{d} & C^1(X,\mathcal{C},\Lambda^1)\ar[r]^{\delta} \ar[u]^{d} & C^2(X,\mathcal{C},\Lambda^1)\ar[r] \ar[u]^{d} & \cdots\\
		C^0(X,\mathcal{C},\Lambda^0)\ar[r]^{\delta} \ar[u]^{d} & C^1(X,\mathcal{C},\Lambda^0)\ar[r]^{\delta}\ar[u]^{d} & C^2(X,\mathcal{C},\Lambda^0)\ar[r]\ar[u]^{d} & \cdots\\
	}
\end{displaymath}
This gives rise to two spectral sequences $\lbrace^{d}E_r\rbrace $ and $\lbrace^{\delta}E_r\rbrace $ with 
$ E^{n,k}_r \Longrightarrow H^{n+k}(C^{\bullet}(X))$. 
In the zeroth page, we have
\begin{align*}
	^{d}E_0^{n,k} &\simeq C^{n,k}(X),~~~~~~~~d_0=d\\
	^{\delta}E_0^{n,k} &\simeq C^{n,k}(X),~~~~~~~~d_0=\delta
\end{align*}
By taking cohomology, in the first page we obtain
\begin{align*}
	^dE_1^{n,k} &= C^n(X,\mathcal{C},H_{dR}^k),\\
	^{\delta}E_1^{n,k} &=H^n(X,\mathcal{C};\Lambda^k),
\end{align*}
where $ H_{dR}^k $ is the presheaf assigning to each plot $P:U\rightarrow X$ the group $ H_{dR}^k(U) $. In particular,
$ ^{d}E_1^{n,0}=C^n(X,\mathcal{C},\mathbb{R}) $
and
$ ^{\delta}E_1^{0,k}=\Omega^k(X) $. 
In the second page, we have
\begin{align*}
	^dE_2^{n,k} &= H^n(X,\mathcal{C};H_{dR}^k),\\
	^{\delta}E_2^{n,k} &=H^k(H^n(X,\mathcal{C};\Lambda^{\bullet})).
\end{align*}
In particular, 
$ ^{d}E_2^{n,0}=H^n(X,\mathcal{C},\mathbb{R}) $
and
$ ^{\delta}E_2^{0,k} =H^k(\Omega^{\bullet}(X))=H_{dR}^k(X) $ for every integer $ k $.
%\begin{align*}
%^dE_2^{0,k} &=\Sigma H_{dR}^k(X) \\
%^{\delta}E_2^{0,k} &=H^k(\Omega^k(X))=H_{dR}^k(X).
%\end{align*}
There is the edge homomorphism 
\begin{center}
	$ H_{dR}^k(X)\longrightarrow H^{k}(C^{\bullet}(X))$
	%$ ^{\delta}E_2^{0,k}\longrightarrow H^{k}(C^{\bullet}(X))$
\end{center}
induced by the chain map $ \Omega^{k}(X)\hookrightarrow C^0(X,\mathcal{C},\Lambda^k)=C^{0,k}(X)\hookrightarrow C^k(X)$.
In addition, the exact sequence of terms of low degree is as below:
\begin{center}
	$ 0\longrightarrow H_{dR}^1(X)\longrightarrow H^1(C^{\bullet}(X)) \longrightarrow   {}^{\delta}E_2^{1,0}\longrightarrow H_{dR}^2(X)\longrightarrow H^2(C^{\bullet}(X)) $.
\end{center}

%A  similar result is obtained in \cite{PI} via the Hochschild approach.

%We discuss the differential equation $ d\beta=\alpha $ for a given differential $k$-form $ \alpha $ of a diffeological space $ X $. A necessary condition to solve the equation
%is that $ \alpha $ is closed, i.e. $ d\alpha=0 $.
%Being closed, $ \alpha $ gives us the closed $k$-forms $ \alpha(P)=P^*\alpha $ on the domains of the plots $ P$ belonging to $\mathcal{G} $. By the Poincar\'{e} lemma, we obtain a $(k-1)$-form $ \beta(P) $ on the domain of $ P $ such that $ d_P(\beta(P))=\alpha(P) $
%and in fact an element $ \beta\in C^0(X,\mathcal{G},\Lambda^{k-1})$.
%In other words, we can solve the pullback of the equation in the domain of plots, however,
%the problem is that can these data construct a differential form of $ X $.
%If $ \delta\beta=0 $, then $ \beta $ is a $(k-1)$-form and a solution for the equation $ d\beta=\alpha $.

\section{Diffeological fiber bundles with gauge group}\label{S6}

Throughout this section, assume that $ X $ and $ T $ are diffeological spaces and let 
$ \varrho:G\rightarrow \mathrm{Diff}(T) $ be a smooth faithful action of a (not necessarily abelian) diffeological group $ G $ on $ T $.
\begin{definition}\label{def-fbgt} 
	%Let $ G $ be a diffeological group,  $ T $ be a diffeological space, and
	%$ \varrho:G\rightarrow Diff(T) $ be a smooth faithful action of $ G $ on $ T $.
	%Let $ X $ be a diffeological space.
	A \textbf{diffeological fiber bundle with gauge group $ G $ 
		and typical fiber $ T $} on $ X $ is a smooth map  $ \pi:E\rightarrow X $ such that there is a
	covering generating family $ \mathcal{C} $ of $ X $
	along with the following data:
	\begin{enumerate}
		\item[(a)]
		a pullback 
		\begin{displaymath}
			\xymatrix{
				\mathrm{dom}(P)\times T \ar[r]^{\qquad\psi_P}\ar[d]_{\Pr_1 } & E \ar[d]^{\pi} \\
				\mathrm{dom}(P) \ar[r]^{\quad P} & X  }
		\end{displaymath} 
		in $ \textsf{Diff} $, called a
		\textbf{trivialization}, for every $ P\in \mathcal{C} $, 
		\item[(b)]
		a smooth map
		$ g_{P_0\stackrel{F_1}{\leftarrow}Q\stackrel{F_0}{\rightarrow} P_1}:\mathrm{dom}(Q) \rightarrow G $, 
		called a \textbf{transition},
		for each $ 1 $-simplex $ P_0\stackrel{F_1}{\longleftarrow}Q\stackrel{F_0}{\longrightarrow} P_1 $ on $ \mathcal{C} $
		such that
		\begin{center}
			$ \psi_{P_1}\big{(}F_0(r),t\big{)}=\psi_{P_0}\big{(}F_1(r),\varrho g_{P_0\stackrel{F_1}{\leftarrow}Q\stackrel{F_0}{\rightarrow} P_1}(r) (t)\big{)}, $
		\end{center}
		for all $ (r,t)\in \mathrm{dom}(Q)\times T $.
	\end{enumerate}
	
\end{definition}
Notice that (a) implies that  $ \pi:E\rightarrow X $ is  a surjective diffeological submersion.
\begin{remark}\label{rem-1} 
	Since the commutative square in (a) is a pullback, there is a diffeomorphism
	$ \overline{\psi_P}:\mathrm{dom}(P)\times T\rightarrow P^*E $
	such that the following diagram commutes:
	\begin{displaymath}
		\xymatrix{
			\mathrm{dom}(P)\times T\ar@/^{-0.4cm}/[rdd]_{\Pr_1}\ar[rd]^{\overline{\psi_P}}\ar@/^0.4cm/[rrd]^{\psi_P}	&	 &  \\
			&	P^*E  \ar[r]^{P_{\#}}\ar[d]_{P^*\pi } & E \ar[d]^{\pi} \\
			&	\mathrm{dom}(P) \ar[r]^{P} & X  }
	\end{displaymath} 
	Consequently, for a fixed $ r\in \mathrm{dom}(P) $, the map $ \psi_{P,r}:T\rightarrow E_{P(r)},\quad t\mapsto \psi_{P}(r,t) $ is a diffeomorphism with the smooth inverse $ (\psi_{P,r})^{-1}:E_{P(r)}\rightarrow T,\quad \xi\mapsto \Pr_2\circ(\overline{\psi_P})^{-1}(r,\xi) $, where $ E_{P(r)}=\pi^{-1}(P(r))\subseteq E $ is endowed with the subspace diffeology inherited from $ E $.
	Hence for each $ 1 $-simplex $ P_0\stackrel{F_1}{\longleftarrow}Q\stackrel{F_0}{\longrightarrow} P_1 $ on $ \mathcal{C} $
	and  for all $ r\in \mathrm{dom}(Q)$ and $ t\in T $,
	we get
	%\begin{align*}
	%	(\psi_{P_0,F_1(r)})^{-1}\circ\psi_{P_1,F_0(r)}(t)	&={\rm{\Pr}}_2\circ(\overline{\psi_{P_0}})^{-1}\big{(}F_1(r),\psi_{P_1}(F_0(r),t)\big{)}\\
	%	&={\rm{\Pr}}_2\circ(\overline{\psi_{P_0}})^{-1}\Big{(}F_1(r),\psi_{P_0}\big{(}F_1(r),\varrho g_{P_0\stackrel{F_1}{\leftarrow}Q\stackrel{F_0}{\rightarrow} P_1}(r) (t)\big{)}\Big{)}\\
	%	&={\rm{\Pr}}_2\circ(\overline{\psi_{P_0}})^{-1}\circ\overline{\psi_{P_0}} ~~\big{(}F_1(r),\varrho g_{P_0\stackrel{F_1}{\leftarrow}Q\stackrel{F_0}{\rightarrow} P_1}(r) (t)\big{)}\\
	%	&=\varrho g_{P_0\stackrel{F_1}{\leftarrow}Q\stackrel{F_0}{\rightarrow} P_1}(r)(t).\\
	%\end{align*}
	%In short,
	\begin{center}
		$ (\psi_{P_0,F_1(r)})^{-1}\circ\psi_{P_1,F_0(r)}=\varrho g_{P_0\stackrel{F_1}{\leftarrow}Q\stackrel{F_0}{\rightarrow} P_1}(r). $
	\end{center}
\end{remark}	

\begin{definition} 
	Let $ \pi:E\rightarrow X $ and $ \pi':E'\rightarrow X $ be two diffeological fiber bundles with gauge group $ G $ 
	and fiber $ T $ on $ X $ corresponding to a smooth action $ \varrho:G\rightarrow \mathrm{Diff}(T) $ of $ G $ on $ T $ with the data given as Definition \ref{def-fbgt}. 
	An \textbf{isomorphism} $ \Phi $ between $ \pi $ and $ \pi' $ is an $ X $-equivalence of smooth projections along with
	a common refinement $ \mathcal{B}=\{R_\beta\}_{\beta\in J} $ of $ \mathcal{C}=\{P_\alpha\}_{\alpha\in I} $ and $ \mathcal{C}'=\{P'_{\alpha'}\}_{\alpha'\in I'} $,  
	also refinement maps $ \lambda:J\rightarrow I $ and $ \lambda':J\rightarrow I' $, a family
	$ \{P_{\lambda(\beta)}\stackrel{f_\beta}{\longleftarrow}R_\beta \stackrel{f'_\beta}{\longrightarrow} P'_{\lambda'(\beta)}\}_{\beta\in J} $ 
	of morphisms of plots,
	such that 
	\begin{enumerate}
		\item[$ \bullet $]
		for each $ \beta\in J $ there exists a smooth map $ h_{\beta}:\mathrm{dom}(R_\beta)\rightarrow G $ such that
		\begin{center}
			$ \Phi\circ\psi_{P_{\lambda(\beta)}}\big{(}f_\beta(r),t\big{)}=\psi'_{P'_{\lambda'(\beta)}}\Big{(}f'_\beta(r),\varrho h_{\beta}(r) (t)\Big{)}, $
		\end{center}	
		for all $ (r,t)\in \mathrm{dom}(R_\beta)\times T $. 
	\end{enumerate}
	This condition implies that
	\begin{center}
		$ \big{(}\psi'_{P'_{\lambda'(\beta)},f'_\beta(r)}\big{)}^{-1}\circ\Phi\circ\psi_{P_{\lambda(\beta)},f_\beta(r)}=\varrho h_{\beta}(r). $
	\end{center}	
	To be isomorphic defines an equivalence relation on the set of diffeological fiber bundles with gauge group $ G $ 
	and fiber $ T $ on $ X $.
	Denote by $ \mathrm{Bund}_T(X,G) $ the set of classes this relation.
\end{definition}

\subsection{Relationship with (non-abelian) diffeological \v{C}ech cohomology in degree 1} 

\begin{definition} 
	Let  $ \mathcal{C} $ be a covering generating family of $ X $.
	A \textbf{diffeological \v{C}ech 1-cocycle} with values in $ G $ is an assignment to each 1-simplex  
	$ P_0\stackrel{F_1}{\leftarrow}Q\stackrel{F_0}{\rightarrow} P_1 $ on $ \mathcal{C} $
	a smooth map
	$ g_{P_0\stackrel{F_1}{\leftarrow}Q\stackrel{F_0}{\rightarrow} P_1}:\mathrm{dom}(Q) \rightarrow G $
	such that for any $2$-simplex on $ \mathcal{C} $
	\begin{center}
		\begin{tikzpicture}
			\matrix (m) [matrix of math nodes, row sep=.8em,
			column sep=0.6em, text height=1.2ex, text depth=0.25ex]
			{ & & P_0 & & \\
				& &  & & \\
				& &  & & \\
				& Q_2  & & Q_1 & \\
				& & Q & & \\
				P_1 & & Q_0 & & P_2 \\};
			\path[->,font=\scriptsize]
			(m-4-2) edge node[auto] {$ F_{2,1} $} (m-1-3) edge node[above] {$F_{2,0}\qquad$} (m-6-1)
			(m-4-4) edge node[above] {$\qquad F_{1,2} $} (m-1-3) edge node[auto] {$F_{1,0} $} (m-6-5)
			(m-6-3) edge node[auto] {$ F_{0,2} $} (m-6-1) edge node[below] {$F_{0,1} $} (m-6-5)
			(m-5-3) edge node[auto]  {$ F_2 $} (m-4-2) edge node[below]  {$\qquad F_1 $} (m-4-4) edge node[auto]  {$ F_0 $} (m-6-3);
		\end{tikzpicture} 
	\end{center}
	we have the cocycle indentity
	\begin{center}
		$ g_{P_0\stackrel{F_{2,1}}{\leftarrow}Q_2\stackrel{F_{2,0}}{\rightarrow} P_1}(F_2(r))\quad  g_{P_1\stackrel{F_{0,2}}{\leftarrow}Q_0\stackrel{F_{0,1}}{\rightarrow} P_2}(F_0(r))=g_{P_0\stackrel{F_{1,2}}{\leftarrow}Q_1\stackrel{F_{1,0}}{\rightarrow} P_2}(F_1(r)) $	
	\end{center}
	for all $ r\in \mathrm{dom}(Q) $.
	Two diffeological \v{C}ech 1-cocycles 
	\begin{center}
		$ \{g_{P_0\stackrel{F_1}{\leftarrow}Q\stackrel{F_0}{\rightarrow} P_1}\}_{1-\mathrm{simplexes}(\mathcal{C})} \qquad$ and $ \qquad\{g'_{P_0\stackrel{F_1}{\leftarrow}Q\stackrel{F_0}{\rightarrow} P_1}\}_{1-\mathrm{simplexes}(\mathcal{C})} $ 
	\end{center}
	are \textbf{equivalent} if there exists a family $ \{h_{P}:\mathrm{dom}(P)\rightarrow G\}_{P\in \mathcal{C}} $  of smooth maps such that
	\begin{center}
		$ g'_{P_0\stackrel{F_1}{\leftarrow}Q\stackrel{F_0}{\rightarrow} P_1}(r)\quad(h_{P_1}\circ F_0(r))=(h_{P_0}\circ F_1(r))\quad g_{P_0\stackrel{F_1}{\leftarrow}Q\stackrel{F_0}{\rightarrow} P_1}(r) $.
	\end{center}
	for each $ 1 $-simplex $ P_0\stackrel{F_1}{\longleftarrow}Q\stackrel{F_0}{\longrightarrow} P_1 $ on $ \mathcal{C} $.
	We denote the set of equivalence classes of diffeological \v{C}ech 1-cocycles by $ H^1(X,\mathcal{C},G) $.
\end{definition}
A refinement $ \lambda:\mathcal{C}'\rightarrow\mathcal{C}  $ induces a natural map 
$ \lambda^*:H^1(X,\mathcal{C};G)\rightarrow H^1(X,\mathcal{C}';G) $ taking $ \mathsf{class}\Big{(}\{g_{P_0\stackrel{F_1}{\leftarrow}Q\stackrel{F_0}{\rightarrow} P_1}\}_{1-\mathrm{simplexes}(\mathcal{C})}\Big{)} $ to $ \mathsf{class}\Big{(}\{g_{P_0\stackrel{F_1}{\leftarrow}Q\stackrel{F_0}{\rightarrow} P_1}\}_{1-\mathrm{simplexes}(\mathcal{C}')}\Big{)} $ by pullback.
Similar to the abelian case, this provides a map
$ j(\mathcal{C}',\mathcal{C}):H^1(X,\mathcal{C};A)\rightarrow H^1(X,\mathcal{C}';A) $
independent of the particular refinement map.
Therefore, we obtain a functor 
\begin{center}
	$ H^1(X,-;G):\mathsf{CGF}(X)^{op}\rightarrow\mathsf{Set} $.
\end{center}
\begin{definition} 
	We define  \textbf{the first non-abelian diffeological \v{C}ech cohomology} of $ X $ with coefficients in $ G $ as
	\begin{center}
		$ \check{H}^1(X;G)=\varinjlim_{\mathcal{C}}H^1(X,\mathcal{C};G). $
	\end{center}
\end{definition}
%\begin{center}
%	$ g'_{P_0\stackrel{F_1}{\leftarrow}Q\stackrel{F_0}{\rightarrow} P_1}=(h_{P_0}\circ F_1) %g_{P_0\stackrel{F_1}{\leftarrow}Q\stackrel{F_0}{\rightarrow} P_1}(h^{-1}_{P_1}\circ F_0) $.
%\end{center}

\begin{construction}\label{cons-1}
	Let we are given a diffeological \v{C}ech 1-cocycle $ \{g_{P_0\stackrel{F_1}{\leftarrow}Q\stackrel{F_0}{\rightarrow} P_1}\}_{1-\mathrm{simplexes}(\mathcal{C})} $ on a covering generating family $\mathcal{C}$. Consider the diffeological sum space
	\begin{center}
		$ \displaystyle\bigsqcup_{P\in\mathcal{C}}\mathrm{dom}(P)\times T $,
	\end{center}
	and define the equivalence relation $ \sim $ on it  by
	\begin{enumerate}
		\item[$ \blacklozenge $]
		$ (P,r,t)\sim(P',r',t') $ if and only if 
		there are morphisms $ P\stackrel{F}{\longleftarrow}Q\stackrel{F'}{\longrightarrow} P' $ such that $ F(s)=r$ and $ F'(s)=r' $ for some $ s\in \mathrm{dom}(Q)$, and $ t=\varrho g_{P\stackrel{F}{\leftarrow}Q\stackrel{F'}{\rightarrow} P'}(s)(t') $.
	\end{enumerate}
	We get the commutative diagram
	\begin{displaymath}
		\xymatrix{
			\bigsqcup_{P\in\mathcal{C}}\mathrm{dom}(P)\times T \ar[dr]_{\Pr} \ar[rrr]^{\qquad qt} & & & E \ar[dll]^{\pi}\\
			& X & &
		}
	\end{displaymath}
	of smooth maps,	where $ E $ is  the diffeological quotient space, $ q $ is the quotient map, and $ \Pr(P,r,t)=P(r) $ for every $ (P,r,t)\in\bigsqcup_{P\in\mathcal{C}}\mathrm{dom}(P)\times T $.
\end{construction}
\begin{remark}
	Notice that the condition $ \blacklozenge $ is equivalent to say that
	\begin{enumerate}
		\item[$  $]
		$ (P,r,t)\sim(P',r',t') $ if and only if $ P(r)=x=P'(r')  $
		and $ t=\varrho g_{P\stackrel{\textbf{r}}{\longleftarrow}\textbf{x}\stackrel{\textbf{r}'}{\longrightarrow} P'}(0)(t') $.
	\end{enumerate}
\end{remark}

\begin{proposition}\label{prop-cons}
	The smooth map 
	$ \pi:E\rightarrow X $ given by Construction \ref{cons-1} is a diffeological fiber bundle with gauge group $ G $ 
	and typical fiber $ T $ on $ X $ in which transitions are the given diffeological \v{C}ech 1-cocycles.
\end{proposition}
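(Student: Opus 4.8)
The plan is to verify directly the three ingredients of Definition \ref{def-fbgt} for the map $\pi:E\to X$ produced by Construction \ref{cons-1}: that $\pi$ is a smooth surjection, that for each $P\in\mathcal{C}$ the inclusion $\mathrm{dom}(P)\times T\hookrightarrow\bigsqcup_{P'\in\mathcal{C}}\mathrm{dom}(P')\times T$ followed by the quotient map $q$ is a trivialization, and that the induced transitions recover the given cocycle. First I would check that $\sim$ is genuinely an equivalence relation; this is not formal. Reflexivity, symmetry and transitivity require, respectively, the normalization $g_{P\stackrel{\mathbf{r}}{\leftarrow}\mathbf{x}\stackrel{\mathbf{r}}{\rightarrow}P}(0)=e$, the inversion $g_{P'\stackrel{F'}{\leftarrow}Q\stackrel{F}{\rightarrow}P}=\big(g_{P\stackrel{F}{\leftarrow}Q\stackrel{F'}{\rightarrow}P'}\big)^{-1}$, and the cocycle identity itself. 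I would derive the first two from the cocycle identity applied to suitably degenerate $2$-simplices (with repeated entries and $0$-plots as nerve plots), exactly as in the classical construction of a bundle from transition functions; faithfulness of $\varrho$ is what lets me pass from an equality of the form $\varrho\, g(\cdot)=\mathrm{id}_T$ back to $g(\cdot)=e$. This also justifies the reformulation of $\blacklozenge$ stated in the Remark preceding the statement, which I will use freely.

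Granting that $\sim$ is an equivalence relation, $\pi$ is well defined because $P(r)=P'(r')$ whenever $(P,r,t)\sim(P',r',t')$, so $\mathrm{Pr}$ descends to the quotient; since $q$ is a subduction, Proposition \ref{pro-subd-smd} applied to $\pi\circ q=\mathrm{Pr}$ gives smoothness of $\pi$, and surjectivity is immediate from $\mathcal{C}$ being a covering generating family. For the trivializations I set $\psi_P(r,t)=q(P,r,t)=[(P,r,t)]$, which is smooth as a composite with $q$ and satisfies $\pi\circ\psi_P=P\circ\mathrm{Pr}_1$, so the square in (a) commutes. By Remark \ref{rem-1} it then remains to show that the induced map $\overline{\psi_P}:\mathrm{dom}(P)\times T\to P^*E$, $(r,t)\mapsto(r,[(P,r,t)])$, is a diffeomorphism.

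The main obstacle is the smoothness of $\overline{\psi_P}^{-1}$: in diffeology a smooth bijection need not be a diffeomorphism, so this cannot be skipped. Bijectivity itself follows from the reformulated $\blacklozenge$ together with the normalization and faithfulness, giving existence and uniqueness of the $t$ representing a prescribed $\xi\in E_{P(r)}$. For the inverse I would take an arbitrary plot $s\mapsto(r(s),\xi(s))$ of $P^*E$ and, working locally, lift $\xi$ through the quotient and the sum to a plot of the form $\xi(s)=[(P',r'(s),t'(s))]$ with $P'\in\mathcal{C}$ and $r',t'$ smooth. The key trick is to use $Q:=P\circ r=P'\circ r'$ as a nerve plot: since $P(r(s))=\pi(\xi(s))=P'(r'(s))$, the diagram $P\stackrel{r}{\longleftarrow}Q\stackrel{r'}{\longrightarrow}P'$ is a genuine $1$-simplex on $\mathcal{C}$, and the identity $[(P,r(s),t(s))]=[(P',r'(s),t'(s))]$ then reads $t(s)=\varrho\, g_{P\stackrel{r}{\leftarrow}Q\stackrel{r'}{\rightarrow}P'}(s)(t'(s))$. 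The right-hand side is smooth in $s$ because the cocycle component is a smooth map into $G$, the action $\varrho$ and the evaluation $G\times T\to T$ are smooth, and $t'$ is smooth; hence $s\mapsto t(s)$ is a plot and $\overline{\psi_P}^{-1}$ is smooth, making each square in (a) a pullback.

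Finally, the transitions come for free: applying $\blacklozenge$ to the $1$-simplex $P_0\stackrel{F_1}{\leftarrow}Q\stackrel{F_0}{\rightarrow}P_1$ at a point $r\in\mathrm{dom}(Q)$ gives $(P_1,F_0(r),t)\sim(P_0,F_1(r),\varrho\,g_{P_0\stackrel{F_1}{\leftarrow}Q\stackrel{F_0}{\rightarrow}P_1}(r)(t))$, that is $\psi_{P_1}(F_0(r),t)=\psi_{P_0}(F_1(r),\varrho\,g_{P_0\stackrel{F_1}{\leftarrow}Q\stackrel{F_0}{\rightarrow}P_1}(r)(t))$ for all $(r,t)\in\mathrm{dom}(Q)\times T$. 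This is precisely condition (b) of Definition \ref{def-fbgt}, with the transition attached to each $1$-simplex being the given cocycle component. Hence $\pi:E\to X$ is a diffeological fiber bundle with gauge group $G$ and typical fiber $T$ whose transitions are the prescribed $1$-cocycle, which completes the verification.
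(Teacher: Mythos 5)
Your proposal is correct and follows essentially the same route as the paper's proof: the same trivializations $\psi_P(r,t)=[P,r,t]$, bijectivity of $\overline{\psi_P}$ via constant ($0$-plot) nerve plots, and the same key computation — lifting a plot of $P^*E$ locally to the form $(F,[P',F',\alpha])$, forming the $1$-simplex $P\stackrel{F}{\longleftarrow}Q\stackrel{F'}{\longrightarrow}P'$ with $Q=P\circ F=P'\circ F'$, and twisting the fiber coordinate by $\varrho\,g_{P\stackrel{F}{\leftarrow}Q\stackrel{F'}{\rightarrow}P'}$ — which the paper phrases as $\overline{\psi_P}$ being a subduction and you phrase equivalently as smoothness of $\overline{\psi_P}^{-1}$. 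Your preliminary verification that $\sim$ is an equivalence relation (via degenerate $2$-simplices) is a worthwhile extra that the paper leaves implicit in Construction \ref{cons-1}, but it does not change the substance of the argument.
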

\begin{proof}
	For each $ P\in \mathcal{C} $,
	define
	\begin{center}
		$ \overline{\psi_P}:\mathrm{dom}(P)\times T\rightarrow P^*E\qquad $ by $\qquad \overline{\psi_P}(r,t)=(\Pr_1,qt)(r,t)=(r,[P,r,t])$.
	\end{center}
	It is clear that $ \overline{\psi_P} $ is well-defined, smooth and fiber preserving.
	To verify that $ \overline{\psi_P} $ is bijective,
	assume that $ (r,[P',s,t']) $ is an element of $ P^*E $. 
	Then $ x:=P(r)=P'(s) $ and we obtain the $ 1 $-simplex $ P'\stackrel{\textbf{s}}{\longleftarrow}\textbf{x}\stackrel{\textbf{r}}{\longrightarrow} P $ on $ \mathcal{C} $.
	Thus
	$ [P',s,t']=[P,r,\varrho\big{(}g_{P\stackrel{\textbf{r}}{\longleftarrow}\textbf{x}\stackrel{\textbf{s}}{\longrightarrow} P'}(0)\big{)}(t')] $.
	This gives rise to a well-defined map
	\begin{center}
		$ \phi_P:P^*E\rightarrow \mathrm{dom}(P)\times T\qquad $ by $\qquad \phi_P(r,[P',s,t'])=\big{(}r,\varrho g_{P'\stackrel{\textbf{s}}{\longleftarrow}\textbf{x}\stackrel{\textbf{r}}{\longrightarrow} P}(0)(t')\big{)} $, 
	\end{center}
	which is indeed the inverse of $ \overline{\psi_P} $. 
	To see $ \overline{\psi_P} $ is a diffeomorphism, it is sufficient to show that $ \overline{\psi_P} $ is a subduction. Let $ (F,[P',F',\alpha]) $ be a plot in $ P^*E $,
	where
	$ \alpha :V\rightarrow T $ is a plot in $ T $, $ F:V\rightarrow \mathrm{dom}(P) $ is a plot in $ \mathrm{dom}(P) $, and $ F':V\rightarrow \mathrm{dom}(P') $ is a plot in $ \mathrm{dom}(P') $.
	Since $ Q:=P'\circ F'=P\circ F $,
	morphisms $ {P'}\stackrel{F'}{\longleftarrow}Q\stackrel{F}{\longrightarrow} P $ are achieved,
	and we can write
	\begin{center}
		$ \psi_P(F~~,~~\varrho\big{(}g_{P\stackrel{F}{\leftarrow}Q\stackrel{F'}{\rightarrow} P'}\big{)}\cdot\alpha)=(F,[P,F,\varrho\big{(}g_{P\stackrel{F}{\leftarrow}Q\stackrel{F'}{\rightarrow} P'}\big{)}\cdot\alpha])=(F,[P',F',\alpha]). $
	\end{center}
	Now consider $ \psi_P:=P_{\#}\circ\overline{\psi_P}:\mathrm{dom}(P)\times T\rightarrow E $ so that 
	the square
	\begin{displaymath}
		\xymatrix{
			\mathrm{dom}(P)\times T \ar[r]^{\qquad\psi_P}\ar[d]_{\Pr_1 } & E \ar[d]^{\pi} \\
			\mathrm{dom}(P) \ar[r]^{\quad P} & X  }
	\end{displaymath} 
	is a pullback in $ \textsf{Diff} $.
	%According to the Construction \ref{cons-1}, 
	Diffeological \v{C}ech 1-cocycles  $ g_{P_0\stackrel{F_1}{\leftarrow}Q\stackrel{F_0}{\rightarrow} P_1}$ play the role of transitions: In fact,
	for each $ 1 $-simplex $ P_0\stackrel{F_1}{\longleftarrow}Q\stackrel{F_0}{\longrightarrow} P_1 $ on $ \mathcal{C} $
	we have
	\begin{align*}
		\psi_{P_1}\big{(}F_0(r),t\big{)}&=[P_1,F_0(r),t]\\
		&=[P_0,F_1(r),\varrho g_{P_0\stackrel{F_1}{\leftarrow}Q\stackrel{F_0}{\rightarrow} P_1}(r) (t)]\\
		&=\psi_{P_0}\big{(}F_1(r),\varrho g_{P_0\stackrel{F_1}{\leftarrow}Q\stackrel{F_0}{\rightarrow} P_1}(r) (t)\big{)},
	\end{align*}
	for all $ (r,t)\in \mathrm{dom}(Q)\times T $.
\end{proof}

\begin{theorem}\label{the-B}
	For every $ \mathcal{C}\in\mathsf{CGF}(X) $, there is a well-defined map
	\begin{center}
		$ \eta_{\mathcal{C}}:H^1(X,\mathcal{C},G)\rightarrow\mathrm{Bund}_T(X,G) $
	\end{center}
	taking the class of a diffeological \v{C}ech 1-cocycle 
	$ \{g_{P_0\stackrel{F_1}{\leftarrow}Q\stackrel{F_0}{\rightarrow} P_1}\}_{1-\mathrm{simplexes}(\mathcal{C})} $
	to the class of
	its corresponding diffeological fiber bundle, given by Construction \ref{cons-1},
	such that 
	\begin{center}
		$ \eta_{\mathcal{C}}=\eta_{\mathcal{C}'}\circ j(\mathcal{C}',\mathcal{C}) $,
	\end{center}
	for any refinement $  \mathcal{C}'\rightarrow\mathcal{C}  $ of covering generating families.
	In particular, by universal property we obtain a unique map
	\begin{center}
		$ \Theta:\check{H}^1(X,G)\rightarrow\mathrm{Bund}_T(X,G) $
	\end{center}
	with the property that $ \eta_{\mathcal{C}}=\Theta\circ \zeta_{\mathcal{C}} $, where $ \zeta_{\mathcal{C}}:H^1(X,\mathcal{C},G)\rightarrow\check{H}^1(X,G) $  is the map given by the definition of colimit for $ \mathcal{C}\in\mathsf{CGF}(X) $. Furthermore, the map
	$ \Theta $
	is a bijective correspondence.
\end{theorem}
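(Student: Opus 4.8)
The plan is to verify the four assertions in turn: well-definedness of each $\eta_{\mathcal{C}}$, compatibility with refinements, existence and uniqueness of $\Theta$ via the universal property of the colimit, and finally bijectivity of $\Theta$. Proposition \ref{prop-cons} already guarantees that Construction \ref{cons-1} turns a single $1$-cocycle into a genuine diffeological fiber bundle with gauge group $G$ whose transitions are that cocycle, so $\eta_{\mathcal{C}}$ is at least defined on representatives; the first task is to show that equivalent cocycles yield isomorphic bundles. Given equivalent cocycles $g,g'$ with gluing family $\{h_P:\mathrm{dom}(P)\rightarrow G\}_{P\in\mathcal{C}}$, I would define a fiber-preserving map $\Phi:E\rightarrow E'$ between the two quotient spaces by $\Phi[P,r,t]=[P,r,\varrho h_P(r)(t)]$, the brackets being interpreted in $E$ and $E'$ respectively. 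The only nontrivial point is that $\Phi$ respects the two equivalence relations, and this reduces precisely to the cocycle-equivalence identity relating $g'$, $g$ and the $h_P$; smoothness and the two-sided inverse follow symmetrically (using $h_P^{-1}$), so $\Phi$ is an $X$-equivalence realizing the required isomorphism of bundles with gauge group, with $\mathcal{C}$ itself serving as common refinement.

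For compatibility with refinements, given $\lambda:\mathcal{C}'\rightarrow\mathcal{C}$ I would compare the bundle $E$ built from a cocycle $g$ on $\mathcal{C}$ with the bundle $E'$ built from its pullback $\lambda^{*}g$ on $\mathcal{C}'$. The refinement morphisms $P'_\beta\stackrel{f_\beta}{\longrightarrow} P_{\lambda(\beta)}$ induce smooth maps $\mathrm{dom}(P'_\beta)\times T\rightarrow\mathrm{dom}(P_{\lambda(\beta)})\times T$ that descend to a fiber-preserving diffeomorphism $E'\rightarrow E$ over $X$; since isomorphism of bundles with gauge group is defined through a common refinement, taking $\mathcal{B}=\mathcal{C}'$ (which refines both sides) exhibits $E$ and $E'$ as isomorphic, giving $\eta_{\mathcal{C}}=\eta_{\mathcal{C}'}\circ j(\mathcal{C}',\mathcal{C})$. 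The passage to $\Theta$ is then formal: the family $\{\eta_{\mathcal{C}}\}$ is a cocone over the diagram $H^1(X,-;G):\mathsf{CGF}(X)^{op}\rightarrow\mathsf{Set}$, so the universal property of the colimit $\check{H}^1(X,G)$ yields a unique $\Theta$ with $\eta_{\mathcal{C}}=\Theta\circ\zeta_{\mathcal{C}}$.

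Finally I would establish bijectivity. Surjectivity is direct: any bundle with gauge group $G$ carries, by Definition \ref{def-fbgt}, a covering generating family, trivializations, and transitions; the transitions satisfy the $1$-cocycle identity, which follows by composing the fiberwise identifications of Remark \ref{rem-1} over a $2$-simplex and using faithfulness of $\varrho$ to read the relation off at the level of $G$. This determines a class in some $H^1(X,\mathcal{C};G)$, and reconstructing via Construction \ref{cons-1} returns a bundle $X$-equivalent to the original through the trivializations $\psi_P$, so $\Theta$ hits its class. For injectivity, suppose the bundles attached to cocycles $g$ on $\mathcal{C}$ and $g'$ on $\mathcal{C}'$ are isomorphic; the isomorphism supplies a common refinement $\mathcal{B}$, refinement maps $\lambda,\lambda'$, morphisms $P_{\lambda(\beta)}\stackrel{f_\beta}{\longleftarrow}R_\beta\stackrel{f'_\beta}{\longrightarrow} P'_{\lambda'(\beta)}$, and smooth maps $h_\beta:\mathrm{dom}(R_\beta)\rightarrow G$ satisfying the compatibility in the definition of bundle isomorphism. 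I would check that, after pulling $g$ and $g'$ back to $\mathcal{B}$, the family $\{h_\beta\}$ is exactly a cocycle-equivalence between $\lambda^{*}g$ and $(\lambda')^{*}g'$, so the two classes already coincide in $H^1(X,\mathcal{B};G)$ and hence in $\check{H}^1(X,G)$.

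The step I expect to be the main obstacle is the bookkeeping in injectivity: translating the geometric datum of a bundle $X$-equivalence, which is defined only over a common refinement and phrased through the fiberwise diffeomorphisms $\psi_{P,r}$, into the algebraic cocycle-equivalence identity. This requires carefully composing the trivializations with the isomorphism and invoking the faithfulness of $\varrho$ to pass from equalities of diffeomorphisms of $T$ back to equalities of $G$-valued functions, while tracking the dependence of all the morphisms $f_\beta,f'_\beta$ on $\mathcal{B}$ and confirming that the resulting $h_\beta$ genuinely satisfy the cocycle-equivalence relation for the pulled-back cocycles.
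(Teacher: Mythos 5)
Your proposal is correct and follows essentially the same route as the paper: the same gluing map $\Phi[P,r,t]=[P,r,\varrho h_P(r)(t)]$ for well-definedness, the same refinement-induced diffeomorphism $[P'_\beta,r,t]\mapsto[P_{\lambda(\beta)},f_\beta(r),t]$ for cocone compatibility, and the same two key facts for bijectivity (a bundle is isomorphic to the reconstruction from its transitions, and a bundle isomorphism yields a cocycle equivalence via faithfulness of $\varrho$). The only cosmetic difference is that the paper packages your injectivity/surjectivity arguments as the construction of an explicit inverse map $\Delta:\mathrm{Bund}_T(X,G)\rightarrow\check{H}^1(X,G)$ sending a bundle to the class of its transitions.
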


\subsection{Some applications} 
\begin{proposition}\label{p-xtg}
	Diffeological fiber bundles with gauge group $ \mathrm{Diff}(T) $, the action $ \mathrm{id}:\mathrm{Diff}(T)\rightarrow \mathrm{Diff}(T) $, and fiber $ T $ on $ X $  are exactly the usual diffeological fiber bundles with fiber $ T $ on $ X $. Consequently, there exists a bijective correspondence between $ \check{H}^1(X,\mathrm{Diff}(T)) $ and $ \mathrm{Bund}_T(X,\mathrm{Diff}(T))\cong\mathrm{Bund}_{fiber}(X,T) $.
\end{proposition}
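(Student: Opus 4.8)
The plan is to prove the proposition in two stages: first, that a smooth map $\pi\colon E\rightarrow X$ carries the structure of a diffeological fiber bundle with gauge group $\mathrm{Diff}(T)$, identity action, and typical fiber $T$ if and only if it is a usual diffeological fiber bundle of fiber type $T$ in the sense of Definition \ref{d1}; and second, to deduce the stated bijection by specializing the already-established Theorem \ref{the-B} to $G=\mathrm{Diff}(T)$ and $\varrho=\mathrm{id}$. Since $\mathrm{id}\colon\mathrm{Diff}(T)\rightarrow\mathrm{Diff}(T)$ is manifestly a smooth faithful action, Theorem \ref{the-B} yields at once that $\Theta\colon\check H^1(X,\mathrm{Diff}(T))\rightarrow\mathrm{Bund}_T(X,\mathrm{Diff}(T))$ is a bijection, so the only genuine work lies in the identification $\mathrm{Bund}_T(X,\mathrm{Diff}(T))\cong\mathrm{Bund}_{fiber}(X,T)$, which the first stage supplies.

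For the forward direction, suppose $\pi$ is a fiber bundle with gauge group $\mathrm{Diff}(T)$ over a covering generating family $\mathcal{C}$. By Remark \ref{rem-1} the trivialization square of Definition \ref{def-fbgt}(a) is equivalent to a diffeomorphism $\overline{\psi_P}\colon\mathrm{dom}(P)\times T\rightarrow P^*E$ over $\mathrm{dom}(P)$; hence $P^*\pi$ is trivial, in particular locally trivial, of fiber type $T$ for every $P\in\mathcal{C}$. Proposition \ref{P1} then gives immediately that $\pi$ is a usual diffeological fiber bundle of fiber type $T$. For the reverse direction, given a usual fiber bundle $\pi$ I would take $\mathcal{C}$ to be the collection of all plots $P$ in $X$ for which $P^*\pi$ is globally trivial of fiber type $T$. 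Restricting any plot to the D-open sets on which its pullback trivializes (which exist by local triviality along plots) shows that $\mathcal{C}$ covers $X$ and generates $\mathcal{D}_X$, so $\mathcal{C}\in\mathsf{CGF}(X)$ and each $P\in\mathcal{C}$ carries a trivialization $\psi_P$. For a $1$-simplex $P_0\stackrel{F_1}{\longleftarrow}Q\stackrel{F_0}{\longrightarrow}P_1$ on $\mathcal{C}$, Remark \ref{rem-1} forces the transition under the identity action to be
\[
g_{P_0\stackrel{F_1}{\leftarrow}Q\stackrel{F_0}{\rightarrow}P_1}(r)=(\psi_{P_0,F_1(r)})^{-1}\circ\psi_{P_1,F_0(r)},
\]
a diffeomorphism of $T$ for each $r$, and with this choice the transition identity of Definition \ref{def-fbgt}(b) holds by construction.

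The main obstacle is verifying that this assignment is smooth into $\mathrm{Diff}(T)$ for its standard diffeology. Using the explicit inverse $(\psi_{P_0,s})^{-1}(\xi)=\Pr_2\circ(\overline{\psi_{P_0}})^{-1}(s,\xi)$ from Remark \ref{rem-1}, the evaluation $(r,t)\mapsto g(r)(t)=\Pr_2\circ(\overline{\psi_{P_0}})^{-1}\big(F_1(r),\psi_{P_1}(F_0(r),t)\big)$ is a composite of smooth maps, the middle pair landing in $P_0^*E$ precisely because $P_0\circ F_1=Q=P_1\circ F_0$; thus $g$ is a plot for the functional diffeology. Applying the same computation to the opposite $1$-simplex $P_1\stackrel{F_0}{\longleftarrow}Q\stackrel{F_1}{\longrightarrow}P_0$ shows that $r\mapsto g(r)^{-1}$ is also such a plot, which is exactly the further condition required for a plot of $\mathrm{Diff}(T)$. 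This exhibits $\pi$ as a gauge-$\mathrm{Diff}(T)$ bundle with typical fiber $T$, completing the first stage at the level of underlying bundles.

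It remains to match the two notions of isomorphism. Any gauge-bundle isomorphism is in particular an $X$-equivalence, hence a usual fiber-bundle isomorphism; conversely, given an $X$-equivalence $\Phi$ between usual fiber bundles, I would choose a common refinement $\mathcal{B}$ of the two trivializing families (available since $\mathsf{CGF}(X)$ is directed under refinement) and set $h_\beta(r)=(\psi'_{P'_{\lambda'(\beta)},f'_\beta(r)})^{-1}\circ\Phi\circ\psi_{P_{\lambda(\beta)},f_\beta(r)}$, whose smoothness into $\mathrm{Diff}(T)$ follows by the same argument used for $g$. Hence the two equivalence relations coincide and $\mathrm{Bund}_T(X,\mathrm{Diff}(T))=\mathrm{Bund}_{fiber}(X,T)$; composing with the bijectivity of $\Theta$ from Theorem \ref{the-B} proves the proposition. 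The technical heart throughout is the repeated passage from fiberwise diffeomorphisms to smooth maps into $\mathrm{Diff}(T)$, controlled entirely by the explicit formulas of Remark \ref{rem-1} and the definition of the standard diffeology on a diffeomorphism group.
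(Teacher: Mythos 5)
Your proposal is correct and follows essentially the same route as the paper's proof: the same choice of covering generating family (all plots whose pullback is globally trivial), the same transition maps $(\psi_{P_0,F_1(r)})^{-1}\circ\psi_{P_1,F_0(r)}$ coming from Remark \ref{rem-1}, and the same appeal to Proposition \ref{P1} for the converse direction and to Theorem \ref{the-B} for the final bijection. The only difference is that you spell out two points the paper compresses --- the verification that the transitions (and the comparison maps $h_\beta$) are smooth into $\mathrm{Diff}(T)$ with its standard diffeology, and the check that gauge-bundle isomorphism and $X$-equivalence of usual bundles induce the same equivalence relation --- and both are carried out correctly.
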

\begin{proof}
	Let $ \pi:E\rightarrow X $ be a usual diffeological fiber bundles with fiber $ T $.
	Take  the collection $ \mathcal{C} $ of all of the plots $ P $ in $ X $ for which there is a diffeomorphism
	$ \overline{\psi_P}:\mathrm{dom}(P)\times T\rightarrow P^*E $. All constant plots in $ X $ belong to this collection, so $ \mathcal{C} $ is a parametrized cover of $ X $.
	Let $ Q:V\rightarrow X $ be an arbitrary plot.
	The pullback $ Q^{*}\pi:Q^{*}X\rightarrow V $ is
	locally trivial with the fiber $ T $, by definition.
	Thus, there exists an open cover $ \{V_i\}_{i\in J} $ of $ V $ such that 
	$ (Q|_{V_i})^*\pi $ is trivial, for all $ i\in J $.
	Therefore, $ Q $ is as the supremum of a family of plots belonging to $ \mathcal{C} $, and hence $ \mathcal{C} $ is a covering generating family for $ X $.
	For each $ P\in\mathcal{C} $, $ \psi_P:=P_{\#}\circ\overline{\psi_P} $ satisfies the condition (a) of Definition \ref{def-fbgt}.
	For each $ 1 $-simplex $ P_0\stackrel{F_1}{\longleftarrow}Q\stackrel{F_0}{\longrightarrow} P_1 $ on $ \mathcal{C} $ define
	\begin{center}
		$ g_{P_0\stackrel{F_1}{\leftarrow}Q\stackrel{F_0}{\rightarrow} P_1}:\mathrm{dom}(Q) \rightarrow \mathrm{Diff}(T)\quad $ by $\quad g_{P_0\stackrel{F_1}{\leftarrow}Q\stackrel{F_0}{\rightarrow} P_1}(r) :=(\psi_{P_0,F_1(r)})^{-1}\circ\psi_{P_1,F_0(r)}$.
	\end{center}
	It is not hard to check that 
	$ g_{P_0\stackrel{F_1}{\leftarrow}Q\stackrel{F_0}{\rightarrow} P_1}:\mathrm{dom}(Q) \rightarrow \mathrm{Diff}(T) $
	is a smooth map satisfying the condition (b) of Definition \ref{def-fbgt}.
	The converse is obvious, by Remark \ref{rem-1} and Proposition \ref{P1}. 
\end{proof}

\begin{proposition} 
	Consider
	$ \varrho:G\rightarrow \mathrm{Diff}(G) $ defined by $ \varrho (g)\mapsto L_g $, which is the smooth action of $ G $ on itself induced by the left transformations.
	Then diffeological fiber bundles with gauge group $ G $ and fiber $ G $ on $ X $ are exactly diffeological $ G $-principal bundles on $ X $. As a result, there is a bijective correspondence between $ \check{H}^1(X,G) $ and $ \mathrm{Bund}_{G}(X,G)\cong\mathrm{Bund}_{Prin}(X,G) $. It is obvious that in the case where $ G $ is abelian, there is a bijective correspondence between the $ 1 $st diffeological \v{C}ech cohomology group $ \check{H}^1(X,G) $ and $ \mathrm{Bund}_{Prin}(X,G) $. 
\end{proposition}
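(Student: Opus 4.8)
The plan is to establish the equivalence of the two notions of bundle directly, by producing in each direction the structure that distinguishes a principal $G$-bundle (a right $G$-action) from a fiber bundle with gauge group $G$ and fiber $G$ (a system of $G$-valued transitions), and then to read off the stated bijections from Theorem \ref{the-B}. The argument runs parallel to Proposition \ref{p-xtg}, with $\mathrm{Diff}(T)$ replaced by $G$ acting on itself by left translations $\varrho(g)=L_g$; the only genuinely new ingredient is the elementary observation that a right-$G$-equivariant diffeomorphism $\phi:G\to G$ satisfies $\phi=L_{\phi(e)}$, so that it is left translation by the unique element $\phi(e)$. This is exactly what turns $\mathrm{Diff}(G)$-valued transitions into $G$-valued ones.

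First I would show that every fiber bundle with gauge group $G$ and fiber $G$ is a diffeological principal $G$-bundle. Given the data of Definition \ref{def-fbgt}, I define a right action of $G$ on $E$ by setting, in each trivialization, $\psi_P(r,t)\cdot a=\psi_P(r,t\,a)$. This is well defined because left and right translations commute: writing the transition relation as $\psi_{P_1}(F_0(r),t)=\psi_{P_0}(F_1(r),g(r)\,t)$ and applying it to $t\,a$ in place of $t$ shows that the two candidate values of $\xi\cdot a$ coincide. For smoothness I would invoke Proposition \ref{pro-subd-smd}: the sum map $\bigsqcup_{P\in\mathcal{C}}\mathrm{dom}(P)\times G\to E$ assembled from the $\psi_P$ is a subduction (as in Construction \ref{cons-1}), its product with $\mathrm{id}_G$ is again a subduction, and over each summand the action reads $(r,t,a)\mapsto\psi_P(r,t\,a)$, which is smooth. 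Each $\overline{\psi_P}$ is then $G$-equivariant, so $P^*\pi$ is a trivial principal $G$-bundle for every $P\in\mathcal{C}$, and Proposition \ref{P1} promotes this to the assertion that $\pi$ is a diffeological principal $G$-bundle.

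Conversely, given a diffeological principal $G$-bundle $\pi:E\to X$, I would take $\mathcal{C}$ to be the collection of all plots $P$ admitting a $G$-equivariant trivialization $\overline{\psi_P}:\mathrm{dom}(P)\times G\to P^*E$; as in Proposition \ref{p-xtg}, $\mathcal{C}$ contains every constant plot and is a covering generating family. For a $1$-simplex $P_0\stackrel{F_1}{\leftarrow}Q\stackrel{F_0}{\rightarrow}P_1$ the composite $(\psi_{P_0,F_1(r)})^{-1}\circ\psi_{P_1,F_0(r)}$ is a right-$G$-equivariant diffeomorphism of $G$, hence equals $L_{g(r)}=\varrho\,g(r)$ for a unique $g(r)\in G$; this is precisely the relation recorded in Remark \ref{rem-1}, so condition (b) of Definition \ref{def-fbgt} holds. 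The point requiring diffeological care is the smoothness of $r\mapsto g(r)$, which I would obtain from the smooth-inverse formula of Remark \ref{rem-1}, writing $g(r)=\Pr_2\circ(\overline{\psi_{P_0}})^{-1}\big(F_1(r),\psi_{P_1}(F_0(r),e)\big)$ as a composite of smooth maps. The cocycle identity for the family $\{g\}$ follows from $L_gL_h=L_{gh}$ evaluated over $2$-simplexes. Matching the two notions of isomorphism---a $G$-equivariant diffeomorphism over $X$ on one side, and an $X$-equivalence carrying the compatibility maps $h_\beta$ on the other---is routine via Remark \ref{rem-1}, exactly as in Proposition \ref{p-xtg}, and yields the bijection $\mathrm{Bund}_G(X,G)\cong\mathrm{Bund}_{Prin}(X,G)$.

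With the two notions identified, Theorem \ref{the-B} applied with $T=G$ and $\varrho$ the left-translation action furnishes the bijection $\Theta:\check{H}^1(X,G)\to\mathrm{Bund}_T(X,G)=\mathrm{Bund}_{Prin}(X,G)$. The last assertion is then immediate: when $G$ is abelian the cocycle and coboundary relations defining $H^1(X,\mathcal{C};G)$ reduce to their additive forms, so $\check{H}^1(X,G)$ is the first abelian diffeological \v{C}ech cohomology group of the earlier sections, and $\Theta$ becomes a bijection between that group and $\mathrm{Bund}_{Prin}(X,G)$. I expect the main obstacle to be the smoothness bookkeeping in the principal-to-gauge direction, specifically verifying that the pointwise-defined elements $g(r)$ assemble into a smooth map $\mathrm{dom}(Q)\to G$, since this is the step that genuinely uses the diffeological group structure of $G$ rather than the bare set-theoretic identity $L_{g(r)}=(\psi_{P_0,F_1(r)})^{-1}\circ\psi_{P_1,F_0(r)}$.
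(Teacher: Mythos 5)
Your proposal is correct, and in the principal-to-gauge direction it coincides with the paper's proof: the paper likewise takes $\mathcal{C}$ to be the plots admitting equivariant trivializations (as in Proposition \ref{p-xtg}) and defines the transition by $g_{P_0\stackrel{F_1}{\leftarrow}Q\stackrel{F_0}{\rightarrow} P_1}(r):=(\psi_{P_0,F_1(r)})^{-1}\circ\psi_{P_1,F_0(r)}(1_G)$, implicitly using exactly your observation that a right-$G$-equivariant diffeomorphism of $G$ equals left translation by its value at the identity. The genuine difference is in the converse direction. The paper never constructs the action on $E$ itself: it invokes Theorem \ref{the-B} to replace $\pi$, up to isomorphism, by the quotient model of Construction \ref{cons-1} built from its transitions, observes that this model carries an evident right $G$-action (right multiplication on the fiber factor, well defined because right translations commute with the left-translation transitions), and then transports the principal structure back across the isomorphism. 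You instead define $\xi\cdot a$ fiberwise on $E$ through the trivializations, prove smoothness of the action by factoring through the subduction $\bigsqcup_{P\in\mathcal{C}}\psi_P$ (whose subductivity the paper establishes inside the proof of Proposition \ref{prop-sur}) crossed with $\mathrm{id}_G$, and conclude with Proposition \ref{P1}. Both routes rest on the same algebraic fact that left and right translations commute; yours is more self-contained and makes explicit the smoothness bookkeeping that the paper dismisses as ``not hard to see,'' while the paper's transfer argument reuses machinery it has already proved and avoids handling an abstract $E$ directly. Note also that both your proposal and the paper are equally brief on the point that isomorphisms on the two sides correspond, which is what the asserted bijection $\mathrm{Bund}_{G}(X,G)\cong\mathrm{Bund}_{Prin}(X,G)$ of isomorphism classes actually requires; flagging it, as you do, is appropriate, but a complete write-up should carry out that check in both directions.
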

\begin{proof}
	Let $ \pi:E\rightarrow X $ be a diffeological $ G $-principal bundle.
	Similar to Proposition \ref*{p-xtg} we can specify trivializations.
	Also, for each $ 1 $-simplex $ P_0\stackrel{F_1}{\longleftarrow}Q\stackrel{F_0}{\longrightarrow} P_1 $ with $ P_0, P_1\in \mathcal{C} $ take
	\begin{center}
		$ g_{P_0\stackrel{F_1}{\leftarrow}Q\stackrel{F_0}{\rightarrow} P_1}:\mathrm{dom}(Q) \rightarrow G\quad $ by $\quad g_{P_0\stackrel{F_1}{\leftarrow}Q\stackrel{F_0}{\rightarrow} P_1}(r) :=(\psi_{P_0,F_1(r)})^{-1}\circ\psi_{P_1,F_0(r)}(1_G)$,
	\end{center}
	where $ 1_G $ is the identity element of $ G $.
	
	Conversely, suppose that  $ \pi:E\rightarrow X $ is a diffeological fiber bundle with gauge group $ G $ and fiber $ G $.
	By 	Theorem \ref{the-B}, $ \pi:E\rightarrow X $ is isomorphic to the corresponding diffeological fiber bundle given by Construction \ref{cons-1} for the transitions of $ \pi:E\rightarrow X $.
	It is not hard to see that the corresponding diffeological fiber bundle is diffeological $ G $-principal bundle with a natural smooth action of $ G $, and hence,
	$ \pi:E\rightarrow X $ is diffeological $ G $-principal bundle.
\end{proof}

\begin{proposition} 
	Suppose that $ V $ is a diffeological vector space and let $ \mathrm{GL}(V) $ denote the space of all linear isomorphisms of diffeological vector spaces.
	Let $ \varrho:\mathrm{GL}(V)\hookrightarrow \mathrm{Diff}(V) $ be the linear action of $ \mathrm{GL}(V) $ on $ V $.
	Then diffeological fiber bundles with gauge group $ \mathrm{GL}(V) $ and the typical fiber $ V $ on $ X $ are the same as diffeological vector bundles of the typical fiber $ V $ on $ X $. Also, there exists a bijective correspondence between $ \check{H}^1(X,\mathrm{GL}(V)) $ and $ \mathrm{Bund}_{V}(X,\mathrm{GL}(V))\cong\mathrm{Bund}_{Vect}(X,V) $.
\end{proposition}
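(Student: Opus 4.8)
The plan is to mirror Proposition \ref{p-xtg} and the principal-bundle case, and to reduce the cohomological assertion to Theorem \ref{the-B}. Note first that $\mathrm{GL}(V)$ is a diffeological group (a diffeological subgroup of $\mathrm{Diff}(V)$ via $\varrho$) and that $\varrho$ is a smooth faithful action of $\mathrm{GL}(V)$ on $V$, so the hypotheses of Theorem \ref{the-B} are met with $G=\mathrm{GL}(V)$ and $T=V$. Consequently $\Theta:\check{H}^1(X,\mathrm{GL}(V))\rightarrow\mathrm{Bund}_V(X,\mathrm{GL}(V))$ is a bijection, and the entire remaining content is the first assertion: that diffeological fiber bundles with gauge group $\mathrm{GL}(V)$ and fiber $V$ are precisely the diffeological vector bundles of fiber type $V$, giving $\mathrm{Bund}_V(X,\mathrm{GL}(V))\cong\mathrm{Bund}_{Vect}(X,V)$.

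For the forward direction I would begin with a diffeological vector bundle $\pi:E\rightarrow X$ of fiber type $V$. As in Proposition \ref{p-xtg}, let $\mathcal{C}$ be the collection of plots $P$ admitting a diffeomorphism $\overline{\psi_P}:\mathrm{dom}(P)\times V\rightarrow P^*E$ over $\mathrm{dom}(P)$ that is fiberwise a linear isomorphism of diffeological vector spaces; by local triviality of fiber type $V$ together with Proposition \ref{P1}, the family $\mathcal{C}$ is a covering generating family and the induced $\psi_P$ satisfy condition (a) of Definition \ref{def-fbgt}. For each $1$-simplex $P_0\stackrel{F_1}{\leftarrow}Q\stackrel{F_0}{\rightarrow}P_1$ I would set $g_{P_0\stackrel{F_1}{\leftarrow}Q\stackrel{F_0}{\rightarrow}P_1}(r):=(\psi_{P_0,F_1(r)})^{-1}\circ\psi_{P_1,F_0(r)}$ as in Remark \ref{rem-1}. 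Since each $\psi_{P,r}:V\rightarrow E_{P(r)}$ is a linear isomorphism of diffeological vector spaces, this composite is a linear automorphism of $V$, so the transition lands in $\mathrm{GL}(V)$; its smoothness into $\mathrm{GL}(V)$ is verified exactly as in Proposition \ref{p-xtg}, establishing condition (b). Hence $\pi$ is a fiber bundle with gauge group $\mathrm{GL}(V)$.

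For the converse I would invoke Theorem \ref{the-B}: a fiber bundle $\pi:E\rightarrow X$ with gauge group $\mathrm{GL}(V)$ and fiber $V$ is isomorphic to the bundle produced by Construction \ref{cons-1} from its own transitions, modeled on $E=\big(\bigsqcup_{P\in\mathcal{C}}\mathrm{dom}(P)\times V\big)/\!\sim$. On this model I would equip each fiber $E_x$ with the vector space structure transported from $V$ through any trivialization $\psi_{P,r}$ with $P(r)=x$; because every gluing isomorphism $\varrho g(s)$ is linear, this structure is independent of the chosen trivialization, and the addition $E\times_X E\rightarrow E$, scalar multiplication $\mathbb{R}\times E\rightarrow E$, and zero section $X\rightarrow E$ are smooth, being locally the standard operations on $\mathrm{dom}(P)\times V$ pushed through the subduction $q$ of Construction \ref{cons-1}. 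With each $\psi_{P,r}$ a linear isomorphism, the trivializations exhibit $\pi$ as locally trivial of fiber type $V$, so $E\rightarrow X$ is a diffeological vector bundle. The identifications $\mathrm{Bund}_V(X,\mathrm{GL}(V))\cong\mathrm{Bund}_{Vect}(X,V)$ and the bijection with $\check{H}^1(X,\mathrm{GL}(V))$ then follow from Theorem \ref{the-B}.

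The hard part will be the converse: checking that the fiberwise vector space structure on the quotient $E$ is genuinely well defined and smooth. The linearity of the elements of $\mathrm{GL}(V)$ under $\varrho$ is exactly what makes the operations descend through the equivalence relation $\sim$, and the smoothness of addition, scalar multiplication, and the zero section must be read off from the quotient (subduction) diffeology of Construction \ref{cons-1}, using that on each trivializing patch these operations agree with the standard ones on $\mathrm{dom}(P)\times V$.
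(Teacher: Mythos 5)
Your proposal is correct and follows exactly the route the paper intends: the paper's own proof of this proposition is literally the single line ``The proof is straightforward,'' and the intended argument is precisely the mirroring of Proposition \ref{p-xtg} and the principal-bundle case via Remark \ref{rem-1}, Proposition \ref{P1}, Theorem \ref{the-B}, and Construction \ref{cons-1} that you carry out. Your write-up in fact supplies more detail than the paper does --- notably the verification that linearity of the transition maps makes the fiberwise vector space structure descend through the quotient of Construction \ref{cons-1} and that addition, scalar multiplication, and the zero section are smooth for the quotient diffeology.
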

\begin{proof}
	The proof is straightforward.
\end{proof}

\begin{remark} 
	In this way, one can simply suggest diffeological affine bundles by 
	diffeological fiber bundles with the gauge group $ \mathrm{Affin}(A) $, the typical fiber $ A $ a diffeological affine space and the natural action $ \varrho:\mathrm{Affin}(A)\hookrightarrow \mathrm{Diff}(A) $.
	In other words, a diffeological affine space  $ \pi:E\rightarrow X $ over $ X $ is a diffeological affine bundle, if there is a
	covering generating family $ \mathcal{C} $ of $ X $
	such that for every $ P\in\mathcal{C} $, the pullback is trivial of fiber type $ A $ in the sense that 
	there is a fiber preserving diffeomorphism $ \psi_P:\mathrm{dom}(P)\times A\rightarrow P^*E $
	such that for every $ r\in \mathrm{dom}(P) $, the
	restriction $ \psi_{P,r}:A\rightarrow E_{P(r)} $ is an isomorphism of diffeological affine spaces.
	Hence, we have a bijective correspondence between $ \check{H}^1(X,\mathrm{Affin}(A)) $ and $ \mathrm{Bund}_{Affin}(X,\mathrm{Affin}(A)) $.
\end{remark}

\subsection{Proof of Theorem \ref{the-B}}

\begin{proposition}\label{pro-well}
	If two diffeological \v{C}ech 1-cocycles 
	\begin{center}
		$ \{g_{P_0\stackrel{F_1}{\leftarrow}Q\stackrel{F_0}{\rightarrow} P_1}\}_{1-\mathrm{simplexes}(\mathcal{C})} \qquad$ and $ \qquad\{g'_{P_0\stackrel{F_1}{\leftarrow}Q\stackrel{F_0}{\rightarrow} P_1}\}_{1-\mathrm{simplexes}(\mathcal{C})} $ 
	\end{center}
	on a covering generating family $\mathcal{C}$ are equivalent, their corresponding diffeological fiber bundles, given by Construction \ref{cons-1}, are isomorphic.
\end{proposition}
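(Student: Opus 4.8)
The plan is to build the isomorphism directly at the level of the sum spaces used in Construction \ref{cons-1} and then descend it to the quotients. Write $\{h_P : \mathrm{dom}(P) \to G\}_{P \in \mathcal{C}}$ for the family of smooth maps witnessing the equivalence of the two cocycles $\{g\}$ and $\{g'\}$, and let $E$, $E'$ be the diffeological fiber bundles produced by Construction \ref{cons-1} from $\{g\}$ and $\{g'\}$, with quotient maps $q$ and $q'$ out of the common sum space $S = \bigsqcup_{P \in \mathcal{C}} \mathrm{dom}(P) \times T$. First I would define a self-map $\tilde\Phi : S \to S$ by $\tilde\Phi(P, r, t) = (P, r, \varrho h_P(r)(t))$. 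On the summand indexed by $P$ this is the composite of $h_P \times \mathrm{id}_T$ with the (smooth) action map $G \times T \to T$, so $\tilde\Phi$ is smooth on each summand and hence smooth on $S$ for the sum diffeology.

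The crucial step is to show that $q' \circ \tilde\Phi$ is constant on the $\sim$-classes of $S$ (with $\sim$ the relation defining $E$), so that it factors through $q$ and yields a well-defined $\Phi : E \to E'$. Using the reformulation of $\sim$, the relation $(P, r, t) \sim (P', r', t')$ means $P(r) = x = P'(r')$ and $t = \varrho\, g_{P \stackrel{\mathbf{r}}{\leftarrow} \mathbf{x} \stackrel{\mathbf{r}'}{\rightarrow} P'}(0)(t')$. Evaluating the cocycle–equivalence identity on the $1$-simplex $P \stackrel{\mathbf{r}}{\leftarrow} \mathbf{x} \stackrel{\mathbf{r}'}{\rightarrow} P'$ at the point $0$ gives $g'(0)\, h_{P'}(r') = h_P(r)\, g(0)$, where I write $g(0), g'(0)$ for the two transition values at that $0$-simplex. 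Since $\varrho$ is a homomorphism, a one-line computation then gives $\varrho h_P(r)(t) = \varrho\, g'(0)\big(\varrho h_{P'}(r')(t')\big)$, which says exactly that $\tilde\Phi(P, r, t)$ and $\tilde\Phi(P', r', t')$ are equivalent under the relation defining $E'$. Thus $\Phi$ is well-defined, and it is smooth by Proposition \ref{pro-subd-smd}: $q$ is a subduction and $\Phi \circ q = q' \circ \tilde\Phi$ is smooth.

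To see $\Phi$ is a diffeomorphism I would run the same construction with the family $\{h_P^{-1}\}$, where $h_P^{-1}(r) = (h_P(r))^{-1}$ is smooth because inversion in $G$ is smooth; the rearranged identity $g(0)\, h_{P'}(r')^{-1} = h_P(r)^{-1}\, g'(0)$ shows it produces a smooth $\Psi : E' \to E$, and $\tilde\Psi \circ \tilde\Phi = \mathrm{id}_S = \tilde\Phi \circ \tilde\Psi$ descends to $\Psi \circ \Phi = \mathrm{id}_E$ and $\Phi \circ \Psi = \mathrm{id}_{E'}$. Finally I would record base-compatibility: since $\Pr \circ \tilde\Phi = \Pr$ on $S$ and $q$ is surjective, $\pi' \circ \Phi = \pi$, so $(\Phi, \mathrm{id}_X)$ is an $X$-equivalence. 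Recalling from Proposition \ref{prop-cons} that $\psi_P(r, t) = q(P, r, t)$ and $\psi'_P(r, t) = q'(P, r, t)$, one reads off $\Phi \circ \psi_P(r, t) = \psi'_P(r, \varrho h_P(r)(t))$, which is precisely the defining condition of an isomorphism of diffeological fiber bundles with gauge group, taken over the common refinement $\mathcal{B} = \mathcal{C}$ with identity refinement maps and identity morphisms and with $h_\beta = h_P$. Hence $E$ and $E'$ are isomorphic.

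The main obstacle is the descent/well-definedness step: correctly matching the cocycle–equivalence identity (which is stated over arbitrary $1$-simplices) with the relation $\sim$ (which is phrased via the $0$-dimensional $1$-simplices $P \stackrel{\mathbf{r}}{\leftarrow} \mathbf{x} \stackrel{\mathbf{r}'}{\rightarrow} P'$), and keeping careful track of the left/right placement of the group elements so that the homomorphism property of $\varrho$ delivers the required equality and not its inverse. Once that identity is in hand, smoothness, bijectivity and base-compatibility are all formal consequences of the quotient maps being subductions.
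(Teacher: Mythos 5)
Your proof is correct and takes essentially the same route as the paper's: both define $\Phi([P,r,t])=[P,r,\varrho h_{P}(r)(t)]$, verify well-definedness from the cocycle-equivalence identity (the paper evaluates it at a point $s$ of a general $1$-simplex $P\stackrel{F}{\leftarrow}Q\stackrel{F'}{\rightarrow}P'$, you at the $0$-plot simplex $P\stackrel{\mathbf{r}}{\leftarrow}\mathbf{x}\stackrel{\mathbf{r}'}{\rightarrow}P'$, which is equivalent), construct the inverse from the inverted family $\{h_{P}^{-1}\}$, and check base compatibility and the gauge-group isomorphism data. Your one refinement is to make smoothness of $\Phi$ explicit by descending $\tilde\Phi$ along the subduction $q$ via Proposition \ref{pro-subd-smd}, where the paper simply asserts it is trivial.
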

\begin{proof}
	Suppose that two diffeological \v{C}ech 1-cocycles 
	\begin{center}
		$ \{g_{P_0\stackrel{F_1}{\leftarrow}Q\stackrel{F_0}{\rightarrow} P_1}\}_{1-\mathrm{simplexes}(\mathcal{C})} \qquad$ and $ \qquad\{g'_{P_0\stackrel{F_1}{\leftarrow}Q\stackrel{F_0}{\rightarrow} P_1}\}_{1-\mathrm{simplexes}(\mathcal{C})} $ 
	\end{center}
	are equivalent.
	By definition, there exists a family $ \{h_{P}:\mathrm{dom}(P)\rightarrow G\}_{P\in \mathcal{C}} $  of smooth maps such that
	\begin{center}
		$ g'_{P_0\stackrel{F_1}{\leftarrow}Q\stackrel{F_0}{\rightarrow} P_1}(r)\quad(h_{P_1}\circ F_0(r))=(h_{P_0}\circ F_1(r))\quad g_{P_0\stackrel{F_1}{\leftarrow}Q\stackrel{F_0}{\rightarrow} P_1}(r) $.
	\end{center}
	Let $ \pi:E\rightarrow X $ and $ \pi:E'\rightarrow X $ be diffeological fiber bundles corresponding to these diffeological \v{C}ech 1-cocycles according to Construction \ref{cons-1}, respectively.
	Define $ \Phi:E\rightarrow E' $
	by $ \Phi([P,r,t])=[P,r,\varrho h_{P}(r) (t)] $. 
	To see $ \Phi $ is well-defined, let $ [P,r,t]=[P',r',t'] $. Then there are morphisms $ P\stackrel{F}{\longleftarrow}Q\stackrel{F'}{\longrightarrow} P' $ such that $ F(s)=r$ and $ F'(s)=r' $ for some $ s\in \mathrm{dom}(Q)$, and $ t=\varrho g_{P\stackrel{F}{\leftarrow}Q\stackrel{F'}{\rightarrow} P'}(s) (t') $.
	Thus,
	\begin{align*}
		\varrho\big{(}h_{P}(r)\big{)}(t)=\varrho\big{(}h_{P}(F(s))\big{)}(t)&=\varrho\big{(}h_{P}(F(s))\big{)}\circ(\varrho\big{(}g_{P\stackrel{F}{\leftarrow}Q\stackrel{F'}{\rightarrow} P'}(s)\big{)}(t'))\\
		&=\varrho\big{(}h_{P}(F(s)) *\big{(}g_{P\stackrel{F}{\leftarrow}Q\stackrel{F'}{\rightarrow} P'}(s)\big{)}(t'))\\
		&=\varrho\big{(}g'_{P \stackrel{F}{\leftarrow}Q\stackrel{F'}{\rightarrow} P'}(s)*(h_{P'}\circ F'(s))\big{)}(t'))\\
		&=\varrho\big{(}g'_{P \stackrel{F}{\leftarrow}Q\stackrel{F'}{\rightarrow} P'}(s)\circ\varrho(h_{P'}\circ F'(s))\big{)}(t'))\\
		&=\varrho\big{(}g'_{P \stackrel{F}{\leftarrow}Q\stackrel{F'}{\rightarrow} P'}(s)\circ\varrho(h_{P'}(r))\big{)}(t'))\\
	\end{align*}
	Hence, 	$ [P,r,\varrho h_{P}(r) (t)]=[P',r',\varrho h_{P'}(r') (t')] $.
	It is trivial that $ \Phi $ is smooth. Similarly, the map $ \Psi:E'\rightarrow E $
	given by $ \Psi([P,r,t])=[P,r,\varrho h_{P}(r) ^{-1}(t)] $ is the smooth inverse of $ \Phi $.
	So $ \Phi $ is a diffeomorphism.
	Also, 
	\begin{center}
		$ \pi'\circ\Phi([P,r,t])=\pi'([P,r,\varrho h_{P}(r) (t)])=P(r)=\pi([P,r,t]) $.
	\end{center}
	It is easy to check that $ \Phi $, along with $ h_{P}:\mathrm{dom}(P)\rightarrow G $ given above, and the maps $ \psi_P:\mathrm{dom}(P)\times T\rightarrow E $ and $ \psi'_P:\mathrm{dom}(P)\times T\rightarrow E' $ given according to the proof of Proposition \ref{prop-cons}, for each $ P\in\mathcal{C} $, is an isomorphism between $ \pi $ and $ \pi' $.
\end{proof}
By proposition \ref{pro-well}, we get a well-defined map
$ \eta_{\mathcal{C}}:H^1(X,\mathcal{C},G)\rightarrow\mathrm{Bund}_T(X,G) $ for all $ \mathcal{C}\in\mathsf{CGF}(X) $. 
\begin{proposition}
	The map
	$ \eta_{\mathcal{C}}:H^1(X,\mathcal{C},G)\rightarrow\mathrm{Bund}_T(X,G) $ satisfy the condition 
	\begin{center}
		$ \eta_{\mathcal{C}}=\eta_{\mathcal{C}'}\circ j(\mathcal{C}',\mathcal{C}) $,
	\end{center}
	for any refinement $  \mathcal{C}'\rightarrow\mathcal{C}  $ of covering generating families, where $ j(\mathcal{C}',\mathcal{C}):H^1(X,\mathcal{C};A)\rightarrow H^1(X,\mathcal{C}';A) $ is the induced map by the refinement. 
\end{proposition}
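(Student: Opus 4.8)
The plan is to unwind both sides on a single class and reduce everything to an isomorphism of bundles. Fix a diffeological \v{C}ech $1$-cocycle $\{g_{P_0\stackrel{F_1}{\leftarrow}Q\stackrel{F_0}{\rightarrow}P_1}\}$ on $\mathcal{C}=\{P_\alpha\}_{\alpha\in I}$, and let $E$ be the bundle produced from it by Construction~\ref{cons-1}. Write the given refinement as $\lambda\colon\mathcal{C}'\to\mathcal{C}$ with refining morphisms $P'_\beta\stackrel{f_\beta}{\longrightarrow}P_{\lambda(\beta)}$. Then $j(\mathcal{C}',\mathcal{C})$ sends the class of $\{g\}$ to the class of the pullback cocycle $\{g'\}$ on $\mathcal{C}'$, where by definition $g'_{P'_{\beta_0}\leftarrow Q'\rightarrow P'_{\beta_1}}=g_{P_{\lambda(\beta_0)}\leftarrow Q'\rightarrow P_{\lambda(\beta_1)}}$ along the image $\mathcal{C}$-simplex $P_{\lambda(\beta_0)}\stackrel{f_{\beta_0}F'_1}{\longleftarrow}Q'\stackrel{f_{\beta_1}F'_0}{\longrightarrow}P_{\lambda(\beta_1)}$ (same nerve plot $Q'$). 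Let $E'$ be the bundle built from $\{g'\}$. Since $\eta_{\mathcal{C}}$ and $\eta_{\mathcal{C}'}$ send these classes to $[E]$ and $[E']$ respectively, the whole statement reduces to showing $E\cong E'$ in $\mathrm{Bund}_T(X,G)$.

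To compare them I would introduce the candidate map $\Psi\colon E'\to E$ defined on representatives by $\Psi([P'_\beta,r',t']')=[P_{\lambda(\beta)},f_\beta(r'),t']$. First I would check it is well defined: if $(P'_{\beta_0},r'_0,t'_0)\sim'(P'_{\beta_1},r'_1,t'_1)$ via a $\mathcal{C}'$-simplex $P'_{\beta_0}\stackrel{F'_1}{\longleftarrow}Q'\stackrel{F'_0}{\longrightarrow}P'_{\beta_1}$ and a point $s$, then the same $s$ and the $\mathcal{C}$-simplex $P_{\lambda(\beta_0)}\stackrel{f_{\beta_0}F'_1}{\longleftarrow}Q'\stackrel{f_{\beta_1}F'_0}{\longrightarrow}P_{\lambda(\beta_1)}$ witness $\sim$ in $E$, precisely because $g'=g$ along this simplex. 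Smoothness of $\Psi$ follows because $(P'_\beta,r',t')\mapsto[P_{\lambda(\beta)},f_\beta(r'),t']$ is a smooth map out of the nebula $\bigsqcup_{P'\in\mathcal{C}'}\mathrm{dom}(P')\times T$ (each $f_\beta$ and the quotient map of $E$ are smooth), hence descends through the quotient subduction by Proposition~\ref{pro-subd-smd}. Finally $\pi\circ\Psi=\pi'$ because $P_{\lambda(\beta)}\circ f_\beta=P'_\beta$, and in the trivializations of Proposition~\ref{prop-cons} one records the key relation $\Psi\circ\psi'_{P'_\beta}(r',t')=\psi_{P_{\lambda(\beta)}}(f_\beta(r'),t')$.

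Next I would prove $\Psi$ is a diffeomorphism. Bijectivity is cleanest via the point-plot description of the gluing (the remark after Construction~\ref{cons-1}), which reduces both $\sim$ and $\sim'$ to a condition at a single centered $0$-plot $\mathbf{x}$; injectivity then follows from $g'=g$ on point-simplices, and surjectivity from axiom D1 for $\mathcal{C}'$ together with the correct choice of $T$-coordinate. The delicate point, and the main obstacle, is smoothness of $\Psi^{-1}$. For this I would use that $\langle\mathcal{C}\rangle=\mathcal{D}_X=\langle\mathcal{C}'\rangle$, so every $P_\alpha\in\mathcal{C}$ is a supremum of restrictions $P_{\alpha,i}=P'_{\beta_i}\circ H_i\in\lfloor\mathcal{C}'\rfloor$ whose domains cover $\mathrm{dom}(P_\alpha)$. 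On each piece the $\mathcal{C}$-simplex $P_\alpha\stackrel{\imath}{\longleftarrow}P_{\alpha,i}\stackrel{f_{\beta_i}H_i}{\longrightarrow}P_{\lambda(\beta_i)}$ yields the explicit local formula
\[
\Psi^{-1}([P_\alpha,u,t])=\bigl[P'_{\beta_i},\,H_i(u),\,\varrho\,g_{P_\alpha\leftarrow P_{\alpha,i}\rightarrow P_{\lambda(\beta_i)}}(u)^{-1}(t)\bigr]',
\]
which is smooth in $(u,t)$ since $H_i$ is smooth, inversion in $G$ is smooth, and $\varrho$ is a smooth action. By locality these local plots assemble into a plot $\Psi^{-1}\circ q$ on the nebula of $\mathcal{C}$, so $\Psi^{-1}$ is smooth by Proposition~\ref{pro-subd-smd}.

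With $\Psi$ a fiber-preserving diffeomorphism, I would package the isomorphism exactly as the definition of isomorphism of diffeological fiber bundles demands: take $\Phi=\Psi^{-1}\colon E\to E'$, the common refinement $\mathcal{B}=\mathcal{C}'$ with refinement maps $\lambda\colon\mathcal{C}'\to\mathcal{C}$ and $\mathrm{id}\colon\mathcal{C}'\to\mathcal{C}'$, the morphisms $P_{\lambda(\beta)}\stackrel{f_\beta}{\longleftarrow}P'_\beta\stackrel{\mathrm{id}}{\longrightarrow}P'_\beta$, and the constant maps $h_\beta\equiv 1_G$. The relation recorded above rearranges to $\Phi\circ\psi_{P_{\lambda(\beta)}}(f_\beta(r),t)=\psi'_{P'_\beta}(r,\varrho\,h_\beta(r)(t))$, using $\varrho(1_G)=\mathrm{id}_T$, which is precisely the required compatibility condition. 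Hence $E\cong E'$ and $[E]=[E']$ in $\mathrm{Bund}_T(X,G)$; as the cocycle class was arbitrary, this gives $\eta_{\mathcal{C}}=\eta_{\mathcal{C}'}\circ j(\mathcal{C}',\mathcal{C})$.
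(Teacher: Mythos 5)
Your proposal is correct and takes essentially the same route as the paper: the paper also reduces the claim to showing that the map $[P'_\beta,r,t]\mapsto[P_{\lambda(\beta)},f_\beta(r),t]$ between the two quotient bundles of Construction \ref{cons-1} is a diffeomorphism, proving well-definedness from $g'=\lambda^*g$, injectivity via the point-simplex description of the gluing, and smoothness of the inverse by locally factoring plots of the $\mathcal{C}$-bundle through $\lfloor\mathcal{C}'\rfloor$ (phrased there as an injective subduction). The only difference is that you also package the final isomorphism data (common refinement $\mathcal{B}=\mathcal{C}'$, $h_\beta\equiv 1_G$, and the compatibility with the trivializations $\psi_P$, $\psi'_{P'}$) demanded by the paper's definition of isomorphism of bundles with gauge group, a step the paper leaves implicit.
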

\begin{proof}
	Assume that $ \mathcal{C}=\{P_\alpha\}_{\alpha\in I} $ and $ \mathcal{C}'=\{P'_\beta\}_{\beta\in J} $ are covering generating families of $ X $ and 
	let $ \mathcal{C}'\prec\mathcal{C} $ be a refinement together with 
	a refinement map $ \lambda:J\rightarrow I $ and a family $ \{f_{\beta}\}_{\beta\in J} $  of morphisms $ P'_\beta\stackrel{f_\beta}{\longrightarrow} P_{\lambda(\beta)} $ between plots.
	To verify $ \eta_{\mathcal{C}}=\eta_{\mathcal{C}'}\circ j(\mathcal{C}',\mathcal{C}) $, we have to check that the map
	\begin{align*}
		\Xi:\bigsqcup_{\beta\in J}\mathrm{dom}(P'_\beta)\times T/\sim \quad&\longrightarrow\quad\bigsqcup_{\alpha\in I}\mathrm{dom}(P_\alpha)\times T/\sim  \\
		[P'_\beta,r,t]\quad&\longmapsto\quad [P_{\lambda(\beta)},f_\beta(r),t] 
	\end{align*}
	is a diffeomorphism, where the quotient spaces are defined according to Construction \ref{cons-1}.
	Clearly, $ \Xi $ is smooth. To injectivity, let $ \Xi[P'_{\beta_1},r_1,t_1]=\Xi[P'_{\beta_2},r_2,t_2] $ so that $ [P_{\lambda(\beta_1)},f_{\beta_1}(r_1),t_1]=[P_{\lambda(\beta_2)},f_{\beta_2}(r_2),t_2] $.
	Then there are morphisms $ P_{\lambda(\beta_1)}\stackrel{F_{\beta_1}}{\longleftarrow}Q\stackrel{F_{\beta_2}}{\longrightarrow} P_{\lambda(\beta_2)} $ such that $ F_{\beta_1}(s)=f_{\beta_1}(r_1)$ and $ F_{\beta_2}(s)=f_{\beta_2}(r_2) $ for some $ s\in \mathrm{dom}(Q)$, and $ t_1=\varrho g_{P_{\lambda(\beta_1)}\stackrel{F_{\beta_1}}{\longleftarrow}Q\stackrel{F_{\beta_2}}{\longrightarrow} P_{\lambda(\beta_2)}}(s) (t_2) $.
	This implies that
	\begin{center}
		$ x:=P'_{\beta_1}(r_1)=P_{\lambda(\beta_1)}\circ f_{\beta_1}(r_1)=P_{\lambda(\beta_1)}\circ F_{\beta_1}(s)=P_{\lambda(\beta_2)}\circ F_{\beta_2}(s)=P_{\lambda(\beta_2)}\circ f_{\beta_2}(r_2)=P'_{\beta_2}(r_2) $
	\end{center}
	and we get
	\begin{align*}
		\varrho g_{P'_{\beta_1}\stackrel{\textbf{r}}{\longleftarrow}\textbf{x}\stackrel{\textbf{r}'}{\longrightarrow} P'_{\beta_2}}(0) (t_2)&=\varrho g_{P_{\lambda(\beta_1)}\stackrel{ }{\longleftarrow}\textbf{x}\stackrel{ }{\longrightarrow} P_{\lambda(\beta_2)}}(0) (t_2)\\
		&=\varrho g_{P_{\lambda(\beta_1)}\stackrel{F_{\beta_1}}{\longleftarrow}Q\stackrel{F_{\beta_2}}{\longrightarrow} P_{\lambda(\beta_2)}}(s) (t_2)=t_1 
	\end{align*}
	Therefore, $ [P'_{\beta_1},r_1,t_1]=[P'_{\beta_2},r_2,t_2] $.
	Now it is sufficient that $ \Xi $ is a subduction.
	Let $ Q $ is a plot in $ \bigsqcup_{\alpha\in I}\mathrm{dom}(P_\alpha)\times T/\sim $ and $ r\in\mathrm{dom}(Q) $. Then
	there are a open neighborhood $ U $ of $ r $, an $ \alpha\in I $, plots $ G $  and $ R $ in $ \mathrm{dom}(P_\alpha) $ and $ T $, respectively, such that
	$ Q|_U=[P_{\alpha}, G, R] $. Because $ \mathcal{C}' $ is a covering generating family for $ X $, 
	there are a open neighborhood $ V\subseteq \mathrm{dom}(P_\alpha) $ of $ G(r) $, a plot $ P'_{\beta}\in\mathcal{C}' $,  and a smooth map $ F  $ between domains such that
	$ P_{\alpha}|_V=P'_{\beta}\circ F $, which gives rise to the $ 1 $-simplex
	$ P_{\alpha} \stackrel{\imath}{\longleftarrow}P_{\alpha}|_V\stackrel{f_\beta\circ F}{\longrightarrow} P_{\lambda(\beta)} $.
	Then
	$ [P'_{\beta}, F, \varrho g_{P_{\alpha} \stackrel{\imath}{\longleftarrow}P_{\alpha}|_V\stackrel{f_\beta\circ F}{\longrightarrow} P_{\lambda(\beta)}}\circ G\cdot R] $ is a plot in $ \bigsqcup_{\beta\in J}\mathrm{dom}(P'_\beta)\times T/\sim $ such that 
	\begin{align*}
		\Xi\circ[P'_{\beta}, F, \varrho g_{P_{\alpha} \stackrel{\imath}{\longleftarrow}P_{\alpha}|_V\stackrel{f_\beta\circ F}{\longrightarrow} P_{\lambda(\beta)}}\circ G\cdot R]
		&=[P_{\lambda(\beta)}, f_\beta\circ F, \varrho g_{P_{\alpha} \stackrel{\imath}{\longleftarrow}P_{\alpha}|_V\stackrel{f_\beta\circ F}{\longrightarrow} P_{\lambda(\beta)}}\circ G\cdot R]\\
		&=[P_{\alpha}, G, R]=Q|_U .
	\end{align*}
	Similarly, one can check that $ \Xi $ is surjective so that it is a subduction.
\end{proof}
By universal property, we obtain a unique map
\begin{center}
	$ \Theta:\check{H}^1(X,G)\rightarrow\mathrm{Bund}_T(X,G) $
\end{center}
with the property that $ \eta_{\mathcal{C}}=\Theta\circ \zeta_{\mathcal{C}} $, where $ \zeta_{\mathcal{C}}:H^1(X,\mathcal{C},G)\rightarrow\check{H}^1(X,G) $  is the map given by the definition of colimit for $ \mathcal{C}\in\mathsf{CGF}(X) $.

On the other hand, one can observe that transititions of a diffeological fiber bundle with gauge group behaves like diffeological \v{C}ech 1-cocycles with values in the sheaf $ G $ (see Example \ref{exa-shv-g}). In fact,
\begin{enumerate}
	\item[$ (a) $]
	$ g_{P\stackrel{F}{\leftarrow}Q\stackrel{F}{\rightarrow} P}(r) $ is the constant map with the value $ 1_G $, where $ 1_G $ denotes the identity element of $ G $, 
	\item[$ (b) $]
	$ g_{P_0\stackrel{F_1}{\leftarrow}Q\stackrel{F_0}{\rightarrow} P_1}(r) $ is the inverse of $ g_{P_1\stackrel{F_0}{\leftarrow}Q\stackrel{F_1}{\rightarrow} P_0}(r) $ in 
	$ G $, for all $ r\in \mathrm{dom}(Q) $.
	
	\item[$ (c) $]
	If
	\begin{center}
		\begin{tikzpicture}
			\matrix (m) [matrix of math nodes, row sep=.8em,
			column sep=0.6em, text height=1.2ex, text depth=0.25ex]
			{ & & P_0 & & \\
				& &  & & \\
				& &  & & \\
				& Q_2  & & Q_1 & \\
				& & Q & & \\
				P_1 & & Q_0 & & P_2 \\};
			\path[->,font=\scriptsize]
			(m-4-2) edge node[auto] {$ F_{2,1} $} (m-1-3) edge node[above] {$F_{2,0}\qquad$} (m-6-1)
			(m-4-4) edge node[above] {$\qquad F_{1,2} $} (m-1-3) edge node[auto] {$F_{1,0} $} (m-6-5)
			(m-6-3) edge node[auto] {$ F_{0,2} $} (m-6-1) edge node[below] {$F_{0,1} $} (m-6-5)
			(m-5-3) edge node[auto]  {$ F_2 $} (m-4-2) edge node[below]  {$\qquad F_1 $} (m-4-4) edge node[auto]  {$ F_0 $} (m-6-3);
		\end{tikzpicture} 
	\end{center}
	is a $2$-simplex on $ \mathcal{C} $,	then 
	\begin{center}
		$ g_{P_0\stackrel{F_{2,1}}{\leftarrow}Q_2\stackrel{F_{2,0}}{\rightarrow} P_1}(F_2(r))\quad g_{P_1\stackrel{F_{0,2}}{\leftarrow}Q_0\stackrel{F_{0,1}}{\rightarrow} P_2}(F_0(r))=g_{P_0\stackrel{F_{1,2}}{\leftarrow}Q_1\stackrel{F_{1,0}}{\rightarrow} P_2}(F_1(r)) $	
	\end{center}
	for all $ r\in \mathrm{dom}(Q) $. 
\end{enumerate}
This suggests to define the map 
$ \Delta:\mathrm{Bund}_T(X,G)\rightarrow \check{H}^1(X,G) $ 
taking the class of a diffeological fiber bundle with gauge group to the class of its transititions
as diffeological \v{C}ech 1-cocycles.  		
\begin{proposition}
	The map 
	$ \Delta:\mathrm{Bund}_T(X,G)\rightarrow \check{H}^1(X,G) $
	is well-defined.
\end{proposition}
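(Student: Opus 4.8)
The plan is to prove that isomorphic bundles produce the same Čech class. Concretely, suppose $\pi:E\to X$ and $\pi':E'\to X$ are diffeological fiber bundles with gauge group $G$ and fiber $T$, carrying data over covering generating families $\mathcal{C}=\{P_\alpha\}_{\alpha\in I}$ and $\mathcal{C}'=\{P'_{\alpha'}\}_{\alpha'\in I'}$ with transition cocycles $\{g_{\bullet}\}$ and $\{g'_{\bullet}\}$, respectively, and that they are isomorphic. Since the transitions of a bundle already satisfy the cocycle identity (properties (a)--(c) recorded above), each bundle determines a class $\zeta_{\mathcal{C}}([\{g_{\bullet}\}])$ and $\zeta_{\mathcal{C}'}([\{g'_{\bullet}\}])$ in the colimit $\check{H}^1(X,G)$. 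What must be shown is their equality.

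First I would unwind the isomorphism $\Phi$. By definition it supplies a common refinement $\mathcal{B}=\{R_\beta\}_{\beta\in J}$ of $\mathcal{C}$ and $\mathcal{C}'$, refinement maps $\lambda:J\to I$ and $\lambda':J\to I'$, morphisms $P_{\lambda(\beta)}\stackrel{f_\beta}{\longleftarrow}R_\beta\stackrel{f'_\beta}{\longrightarrow}P'_{\lambda'(\beta)}$, and smooth maps $h_\beta:\mathrm{dom}(R_\beta)\to G$ with $(\psi'_{P'_{\lambda'(\beta)},f'_\beta(r)})^{-1}\circ\Phi\circ\psi_{P_{\lambda(\beta)},f_\beta(r)}=\varrho h_\beta(r)$. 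Because the colimit maps factor as $\zeta_{\mathcal{C}}=\zeta_{\mathcal{B}}\circ j(\mathcal{B},\mathcal{C})$ and $\zeta_{\mathcal{C}'}=\zeta_{\mathcal{B}}\circ j(\mathcal{B},\mathcal{C}')$, it suffices to prove that the pulled-back cocycles $\lambda^*\{g_\bullet\}$ and $\lambda'^*\{g'_\bullet\}$ agree as classes in $H^1(X,\mathcal{B};G)$; I will show they are equivalent via the family $\{h_\beta\}_{\beta\in J}$.

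The core is a direct computation on a $1$-simplex $R_{\beta_0}\stackrel{G_1}{\longleftarrow}S\stackrel{G_0}{\longrightarrow}R_{\beta_1}$ on $\mathcal{B}$. Put $x=S(s)$. Since $f_\beta,f'_\beta$ are morphisms of plots, $P_{\lambda(\beta)}\circ f_\beta=R_\beta=P'_{\lambda'(\beta)}\circ f'_\beta$, so all the relevant trivializing maps land in the common fibres $E_x$ and $E'_x$, and $\Phi$ restricts to a diffeomorphism $\phi=\Phi|_{E_x}:E_x\to E'_x$. By Remark \ref{rem-1} the pulled-back transition is $\varrho(\lambda^*g)(s)=(\psi_{P_{\lambda(\beta_0)},f_{\beta_0}(G_1(s))})^{-1}\circ\psi_{P_{\lambda(\beta_1)},f_{\beta_1}(G_0(s))}$, and similarly for $\lambda'^*g'$ with the primed data. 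Writing $a_0,a_1$ for the two maps $\psi_{P_{\lambda(\beta_i)},f_{\beta_i}(\cdot)}:T\to E_x$ and $b_0,b_1$ for the corresponding $\psi'_{P'_{\lambda'(\beta_i)},f'_{\beta_i}(\cdot)}:T\to E'_x$ (evaluated at $G_1(s)$ and $G_0(s)$ as appropriate), the isomorphism relation reads $\varrho h_{\beta_i}=b_i^{-1}\circ\phi\circ a_i$, while $\varrho(\lambda^*g)(s)=a_0^{-1}\circ a_1$ and $\varrho(\lambda'^*g')(s)=b_0^{-1}\circ b_1$. Both sides of the desired equivalence identity then telescope to $b_0^{-1}\circ\phi\circ a_1$ in $\mathrm{Diff}(T)$: indeed $\varrho(\lambda'^*g')(s)\circ\varrho h_{\beta_1}=(b_0^{-1}\circ b_1)\circ(b_1^{-1}\circ\phi\circ a_1)$ and $\varrho h_{\beta_0}\circ\varrho(\lambda^*g)(s)=(b_0^{-1}\circ\phi\circ a_0)\circ(a_0^{-1}\circ a_1)$ both equal $b_0^{-1}\circ\phi\circ a_1$. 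As $\varrho$ is faithful, hence injective, this equality of diffeomorphisms descends to the equality $(\lambda'^*g')(s)\,(h_{\beta_1}\circ G_0(s))=(h_{\beta_0}\circ G_1(s))\,(\lambda^*g)(s)$ in $G$, which is precisely the cocycle equivalence relation on $\mathcal{B}$.

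The main obstacle I anticipate is bookkeeping rather than conceptual: describing correctly how the refinement maps act on transition cocycles (pre/post-composing the simplex morphisms with $f_\beta,f'_\beta$) and keeping the non-abelian multiplication order fixed throughout, since no factors may be permuted. The decisive structural input is the faithfulness of $\varrho$, which is what transfers the computed equality of diffeomorphisms back to an identity in $G$; without it the argument would only control the cocycles modulo $\ker\varrho$. Once equivalence on $\mathcal{B}$ is established, $\zeta_{\mathcal{C}}([\{g_\bullet\}])=\zeta_{\mathcal{C}'}([\{g'_\bullet\}])$ follows from the colimit compatibility, and $\Delta$ is well-defined. In particular, taking $\Phi=\mathrm{id}_E$ shows that a single bundle equipped with two different systems of data yields the same class, so $\Delta$ depends only on the isomorphism class in $\mathrm{Bund}_T(X,G)$.
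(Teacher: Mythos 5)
Your proposal is correct and follows essentially the same route as the paper's own proof: extract the common refinement $\mathcal{B}$, refinement maps, and the maps $h_\beta$ from the isomorphism $\Phi$, form the induced $1$-simplexes on $\mathcal{C}$ and $\mathcal{C}'$, telescope the trivialization maps (via Remark \ref{rem-1}) so that both sides of the equivalence identity reduce to $(\psi'_{P'_{\lambda'(\beta_0)},\,\cdot})^{-1}\circ\Phi\circ\psi_{P_{\lambda(\beta_1)},\,\cdot}$, and invoke faithfulness of $\varrho$ to descend to the identity in $G$, then pass to the colimit. The only difference is presentational (your shorthand $a_i,b_i,\phi$ versus the paper's explicit composites, plus your nice closing remark about $\Phi=\mathrm{id}_E$), not mathematical.
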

\begin{proof}
	Let $ \pi:E\rightarrow X $ and $ \pi':E'\rightarrow X $ be
	diffeological fiber bundle with gauge group $ G $ 
	and typical fiber $ T $ on $ X $ with the data given as Definition \ref{def-fbgt}.  
	Assume that $ \Phi $ is an isomorphism between $ \pi $ and $ \pi' $ along with
	a common refinement $ \mathcal{B}=\{R_\beta\}_{\beta\in J} $ of $ \mathcal{C}=\{P_\alpha\}_{\alpha\in I} $ and $ \mathcal{C}'=\{P'_{\alpha'}\}_{\alpha'\in I'} $,  
	also refinement maps $ \lambda:J\rightarrow I $ and $ \lambda':J\rightarrow I' $, a family
	$ \{P_{\lambda(\beta)}\stackrel{f_\beta}{\longleftarrow}R_\beta \stackrel{f'_\beta}{\longrightarrow} P'_{\lambda'(\beta)}\}_{\beta\in J} $ 
	of morphisms of plots,
	such that 
	for each $ \beta\in J $ there exists a smooth map $ h_{\beta}:\mathrm{dom}(R_\beta)\rightarrow G $ such that
	\begin{center}
		$ \Phi\circ\psi_{P_{\lambda(\beta)}}\big{(}f_\beta(r),t\big{)}=\psi'_{P'_{\lambda'(\beta)}}\Big{(}f'_\beta(r),\varrho h_{\beta}(r) (t)\Big{)}, $
	\end{center}	
	for all $ (r,t)\in \mathrm{dom}(R_\beta)\times T $.  
	
	To complete the proof, it is enough to show that $ \lambda^*g$ and $\lambda'^*g' $ are equivalent on $ \mathcal{B} $, where $ g $ and $ g' $ denote transitions of $ \pi $ and $ \pi' $, respectively.
	Let
	$ R_{\beta_0}\stackrel{F_{\beta_1}}{\longleftarrow}Q\stackrel{F_{\beta_0}}{\longrightarrow} R_{\beta_1} $
	be a $ 1 $-simplex on $ \mathcal{B} $. 
	Then the diagram
	\begin{displaymath}
		\xymatrix{
			P_{\lambda(\beta_0)} & & P_{\lambda(\beta_1)}\\
			R_{\beta_0}\ar[d]_{f'_{\beta_0}}\ar[u]^{f_{\beta_0}} & \ar[l]_{F_{\beta_1}} \ar@{.>}[ld]^{b'}\ar@{.>}[lu]_{b} Q \ar@{.>}[rd]_{a'}\ar@{.>}[ru]^{a}\ar[r]^{F_{\beta_0}} & R_{\beta_1} \ar[d]^{f'_{\beta_1}}\ar[u]_{f_{\beta_1}} \\
			P'_{\lambda'(\beta_0)} & & P'_{\lambda'(\beta_1)}
		}
	\end{displaymath}
	commutes, so $ 1 $-simplexes 	$ P_{\lambda(\beta_0)}\stackrel{b}{\longleftarrow}Q\stackrel{a}{\longrightarrow} P_{\lambda(\beta_1)} $ 
	and
	$ P'_{\lambda'(\beta_0)}\stackrel{b'}{\longleftarrow}Q\stackrel{a'}{\longrightarrow} P'_{\lambda'(\beta_1)} $ are achieved
	on $ \mathcal{C} $ and $ \mathcal{C}' $, respectively. Thus, for each $ r\in\mathrm{dom}(Q) $, we can write
	\begin{align*}
		\varrho \lambda'^*g'_{R_{\beta_0}\stackrel{F_{\beta_1}}{\longleftarrow}Q\stackrel{F_{\beta_0}}{\longrightarrow} R_{\beta_1}}(r)\circ\varrho(h_{\beta_1}\circ F_{\beta_0}(r))
		&=\varrho  g'_{P'_{\lambda'(\beta_0)}\stackrel{b'}{\longleftarrow}Q\stackrel{a'}{\longrightarrow} P'_{\lambda'(\beta_1)}}(r)\circ\big{(}\psi'_{P'_{\lambda'(\beta_1)},a'(r)}\big{)}^{-1}\circ\Phi\circ\psi_{P_{\lambda(\beta_1)},a(r)}\\
		&=(\psi'_{P_{\lambda'(\beta_0)},b'(r)})^{-1}\circ\Phi\circ\psi_{P_{\lambda(\beta_1)},a(r)}\\
		&=(\psi'_{P_{\lambda'(\beta_0)},b'(r)})^{-1}\circ\Phi\circ\psi_{\lambda(\beta_0),b(r)}\circ\varrho g_{P_{\lambda(\beta_0)}\stackrel{b}{\longleftarrow}Q\stackrel{a}{\longrightarrow} P_{\lambda(\beta_1)}}(r)\\
		&=\varrho(h_{\beta_0}\circ F_{\beta_1}(r))\circ \varrho g_{P_{\lambda(\beta_0)}\stackrel{b}{\longleftarrow}Q\stackrel{a}{\longrightarrow} P_{\lambda(\beta_1)}}(r)\\
		&=\varrho(h_{\beta_0}\circ F_{\beta_1}(r))\circ \varrho \lambda^*g_{R_{\beta_0}\stackrel{F_{\beta_1}}{\longleftarrow}Q\stackrel{F_{\beta_0}}{\longrightarrow} R_{\beta_1}}(r). \\
	\end{align*}
	Since $ \varrho $ is a faithful action, we conclude that 
	\begin{center}
		$ \lambda'^* g'_{R_{\beta_0}\stackrel{F_{\beta_1}}{\longleftarrow}Q\stackrel{F_{\beta_0}}{\longrightarrow} R_{\beta_1}}(r)\quad h_{\beta_1}\circ F_{\beta_0}(r)
		=h_{\beta_0}\circ F_{\beta_1}(r)\quad \lambda^*g_{R_{\beta_0}\stackrel{F_{\beta_1}}{\longleftarrow}Q\stackrel{F_{\beta_0}}{\longrightarrow} R_{\beta_1}}(r) $.
	\end{center}
	Therefore
	$  \mathsf{class}(g) $ 
	and
	$ \mathsf{class}(g') $
	are the same in
	$ \check{H}^1(X,G) $.
\end{proof}

\begin{proposition}\label{prop-sur}
	Suppose that $ \pi:E\rightarrow X $ is
	diffeological fiber bundle with gauge group $ G $ 
	and typical fiber $ T $ on $ X $.  
	Then $ \pi $ is isomorphic to diffeological fiber bundle
	with gauge group $ G $ corresponding to the transitions of $ \pi $, according to Construction \ref{cons-1}.
\end{proposition}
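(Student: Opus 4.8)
The plan is to exhibit the tautological evaluation map as the required isomorphism. Write $\pi'\colon E'\to X$ for the bundle produced by Construction \ref{cons-1} from the transitions $\{g_{P_0\stackrel{F_1}{\leftarrow}Q\stackrel{F_0}{\rightarrow} P_1}\}$ of $\pi$, so that $E'=\bigsqcup_{P\in\mathcal{C}}\mathrm{dom}(P)\times T/\!\sim$, and define
\[
\Phi\colon E'\longrightarrow E,\qquad \Phi([P,r,t])=\psi_P(r,t),
\]
using the trivializations $\psi_P$ of $\pi$ from Definition \ref{def-fbgt}(a). I will verify that $\Phi$ is a well-defined, fiber-preserving diffeomorphism which intertwines the trivialization data via the trivial gauge comparison $h_P\equiv 1_G$, whence $\pi'$ and $\pi$ are isomorphic as diffeological fiber bundles with gauge group $G$.

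First I would check that $\Phi$ is well defined. If $[P,r,t]=[P',r',t']$, then by the defining relation $\blacklozenge$ there are morphisms $P\stackrel{F}{\leftarrow}Q\stackrel{F'}{\rightarrow} P'$ with $F(s)=r$, $F'(s)=r'$ and $t=\varrho\,g_{P\stackrel{F}{\leftarrow}Q\stackrel{F'}{\rightarrow} P'}(s)(t')$; substituting into the transition identity of Definition \ref{def-fbgt}(b) for this $1$-simplex yields exactly $\psi_P(r,t)=\psi_{P'}(r',t')$. Smoothness of $\Phi$ is immediate from Proposition \ref{pro-subd-smd}, since the quotient map $q$ is a subduction and $\Phi\circ q$ restricts on each summand $\mathrm{dom}(P)\times T$ to the plot $\psi_P$. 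The identity $\pi\circ\Phi=\pi'$ follows from the pullback square in Definition \ref{def-fbgt}(a), which gives $\pi\circ\psi_P=P\circ\Pr_1$, compared with $\pi'([P,r,t])=P(r)$ from Construction \ref{cons-1}.

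Next I would establish that $\Phi$ is bijective. Surjectivity is clear, since for $\xi\in E$ one chooses $P\in\mathcal{C}$ and $r\in\mathrm{dom}(P)$ with $P(r)=\pi(\xi)$ (possible as $\mathcal{C}$ is a covering generating family) and writes $\xi=\psi_{P,r}\big((\psi_{P,r})^{-1}(\xi)\big)=\Phi\big[P,r,(\psi_{P,r})^{-1}(\xi)\big]$, where $\psi_{P,r}$ is the fiberwise diffeomorphism of Remark \ref{rem-1}. For injectivity, suppose $\psi_P(r,t)=\psi_{P'}(r',t')=:\xi$; then $P(r)=\pi(\xi)=P'(r')=:x$, and applying the identity $(\psi_{P,r})^{-1}\circ\psi_{P',r'}=\varrho\,g_{P\stackrel{\textbf{r}}{\leftarrow}\textbf{x}\stackrel{\textbf{r}'}{\rightarrow} P'}(0)$ of Remark \ref{rem-1} to the $0$-plot $1$-simplex gives $t=\varrho\,g_{P\stackrel{\textbf{r}}{\leftarrow}\textbf{x}\stackrel{\textbf{r}'}{\rightarrow} P'}(0)(t')$, which is precisely the simplified form of $\sim$ recorded after Construction \ref{cons-1}; hence $[P,r,t]=[P',r',t']$.

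The main obstacle is upgrading the bijection to a diffeomorphism, i.e.\ showing $\Phi$ is a subduction (a bijective subduction being a diffeomorphism, since the local lifts then furnish a smooth inverse). Given a plot $\chi\colon W\to E$, the composite $\pi\circ\chi$ lies in $\langle\mathcal{C}\rangle$, so near any point it factors as $\pi\circ\chi=P\circ H$ with $P\in\mathcal{C}$ and $H$ smooth; then $(H,\chi)$ is a plot into the pullback $P^*E$, the map $\beta:=\Pr_2\circ(\overline{\psi_P})^{-1}\circ(H,\chi)$ is smooth by Remark \ref{rem-1}, and $w\mapsto[P,H(w),\beta(w)]$ is a plot in $E'$ satisfying $\Phi[P,H(w),\beta(w)]=\psi_{P,H(w)}(\beta(w))=\chi(w)$. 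Thus $\chi$ lifts locally through $\Phi$, so $\Phi$ is a subduction and hence a diffeomorphism. Finally, since the trivializations of $\pi'$ satisfy $\psi'_P(r,t)=[P,r,t]$ by Proposition \ref{prop-cons}, we get $\Phi\circ\psi'_P(r,t)=\psi_P(r,t)$, which is the isomorphism condition with $h_P\equiv 1_G$ (taking $\mathcal{C}$ as the common refinement with identity refinement maps). As the transitions of $\pi'$ are by construction the given cocycle $\{g\}$, this exhibits $\Phi$ as an isomorphism of diffeological fiber bundles with gauge group $G$ between $\pi'$ and $\pi$.
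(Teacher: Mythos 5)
Your proposal is correct and takes essentially the same route as the paper: the same tautological map $\Phi([P,r,t])=\psi_P(r,t)$, the same surjectivity and injectivity arguments via Remark \ref{rem-1}, and the same subduction strategy of lifting plots locally through the pullback trivializations $\overline{\psi_P}$ (your direct local lifts into $E'$ are just $q$ composed with the paper's lifts, so invoking Proposition \ref{pro-subd-smd} for the quotient map is an equivalent packaging). If anything, you are slightly more thorough than the paper in explicitly verifying well-definedness of $\Phi$ and the gauge-compatibility condition with $h_P\equiv 1_G$.
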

\begin{proof} 
	Assume that $ \pi:E\rightarrow X $ is a diffeological fiber bundle with the data given as Definition \ref{def-fbgt}.
	Let $ \Pr:\bigsqcup_{P\in\mathcal{C}}\mathrm{dom}(P)\times T/\sim\rightarrow X $ be the diffeological fiber bundle corresponding to the transitions of $ \pi $, according to Construction \ref{cons-1}.
	Consider the well-defined map $ \Phi:\bigsqcup_{P\in\mathcal{C}}\mathrm{dom}(P)\times T/\sim\rightarrow E $
	with $ \Phi([P,r,t])=\psi_{P}(r,t) $. Then the diagram
	\begin{displaymath}
		\xymatrix{
			& \bigsqcup_{P\in\mathcal{C}}\mathrm{dom}(P)\times T\ar[drr]^{\quad \bigsqcup_{P\in\mathcal{C}} \psi_P}\ar[dl]_{q} & &\\
			\bigsqcup_{P\in\mathcal{C}}\mathrm{dom}(P)\times T/\sim \ar[dr]_{\Pr} \ar[rrr]^{\qquad \Phi} & & & E \ar[dll]^{\pi}\\
			& X & &
		}
	\end{displaymath}
	commutes. So to see that $ \Phi $ is an isomorphism, we have to show that it is a diffeomorphism.
	%
	%	is an isomorphism between $ \Pr $ and $ \pi $, where $ \theta([P,r,t])=\psi_{P}(r,t) $ and $ \Pr([P,r,t])=P(r) $.
	%the quotient space $ \bigsqcup_{P\in\mathcal{C}}\mathrm{dom}(P)\times T/\sim $ is diffeomorphic to $ E $.

	We claim that $ \bigsqcup_{P\in\mathcal{C}} \psi_P:\bigsqcup_{P\in\mathcal{C}}\mathrm{dom}(P)\times T\rightarrow E$ is a subduction, which implies that $ \Phi $ is a subduction by \cite[\S 1.51]{PIZ2013}.
	To see $ \bigsqcup_{P\in\mathcal{C}} \psi_P $ is surjective, take $ \xi \in E $. Since $ \mathcal{C} $ is a parametrized cover of $ X $, there are $ P\in\mathcal{C} $ and $ r\in \mathrm{dom}(P) $ such that $ \pi(\xi)=P(r) $. So $ \xi $ is an element of $ E_{P(r)} $ and we can find a $ t\in T $ with $ \psi_{P,r}(r)=\xi $, or $ \psi_{P}(r,t)=\xi $. Now let $ Q:V\rightarrow E $ be an arbitrary plot in $ E $ and $ r\in V $. Since  $ \mathcal{C} $ is a covering generating family of $ X $ and $ \pi\circ Q $ is a plot in $ X $, we can write $ \pi\circ Q|_{V'}=P\circ F $
	for some $ P\in\mathcal{C} $ and a smooth map $ F:V'\rightarrow \mathrm{dom}(P) $ defined on an open neighborhood $ V'\subseteq V $ of $ r $.
	Then $ (F,Q|_{V'}) $ is a plot in $ P^*E $.
	Set $ L:=(\overline{\psi_P})^{-1}\circ(F,Q|_{V'}) $ so that $ (\bigsqcup_{P\in\mathcal{C}} \psi_P)\circ L=Q|_{V'} $.
	
	To complete the proof, it is sufficient to prove that $ \Phi $ is injective.
	Let	$ \Phi([P,r,t])=\Phi([P',r',t']) $ so that $ \psi_{P}(r,t)=\psi_{P'}(r',t') $ or
	$ \psi_{P,r}(t)=\psi_{P',r'}(t') $.
	Then $ x:=P(r')=P(r') $ and the $ 1 $-simplex $ P'\stackrel{\textbf{r}'}{\longleftarrow}\textbf{x}\stackrel{\textbf{r}}{\longrightarrow} P $ on $ \mathcal{C} $ is achieved.
	By Remark \ref{rem-1}, 
	\begin{center}
		$ \varrho g_{P'\stackrel{\textbf{r}'}{\longleftarrow}\textbf{x}\stackrel{\textbf{r}}{\longrightarrow} P}(0)(t')=(\psi_{P,r})^{-1}\circ\psi_{P',r'}(t')=(\psi_{P,r})^{-1}\circ\psi_{P,r}(t)=t. $
	\end{center}
	Therefore, $ [P,r,t]=[P',r',t'] $.
\end{proof}
\begin{proposition}
	The map 
	$ \Theta:\check{H}^1(X,G)\rightarrow\mathrm{Bund}_T(X,G) $
	is a bijective correspondence.
\end{proposition}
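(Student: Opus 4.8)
The plan is to exhibit the map $\Delta:\mathrm{Bund}_T(X,G)\rightarrow\check{H}^1(X,G)$ constructed just above — which sends the isomorphism class of a diffeological fiber bundle with gauge group $G$ to the class of its transitions, viewed as a diffeological \v{C}ech $1$-cocycle — as a two-sided inverse for $\Theta$. Since $\Delta$ is already known to be well defined, once we verify the two identities $\Delta\circ\Theta=\mathrm{id}_{\check{H}^1(X,G)}$ and $\Theta\circ\Delta=\mathrm{id}_{\mathrm{Bund}_T(X,G)}$, it follows immediately that $\Theta$ is a bijective correspondence with inverse $\Delta$.

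For the identity $\Delta\circ\Theta=\mathrm{id}$, I would first use that every class in $\check{H}^1(X,G)$ has the form $\zeta_{\mathcal{C}}([g])$ for some $\mathcal{C}\in\mathsf{CGF}(X)$ and some diffeological \v{C}ech $1$-cocycle $g$, the maps $\zeta_{\mathcal{C}}$ being jointly surjective because $\check{H}^1(X,G)$ is a colimit over refinements. Since $\Theta\circ\zeta_{\mathcal{C}}=\eta_{\mathcal{C}}$ by the defining property in Theorem \ref{the-B}, it suffices to check that $\Delta\circ\eta_{\mathcal{C}}=\zeta_{\mathcal{C}}$. Now $\eta_{\mathcal{C}}([g])$ is, by definition, the class of the bundle produced from $g$ by Construction \ref{cons-1}, and Proposition \ref{prop-cons} asserts that the transitions of that bundle are exactly the cocycle $g$. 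Applying $\Delta$ therefore returns $\zeta_{\mathcal{C}}([g])$, as required.

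For the reverse identity $\Theta\circ\Delta=\mathrm{id}$, I would start from the isomorphism class of an arbitrary diffeological fiber bundle $\pi:E\rightarrow X$, trivialized over a covering generating family $\mathcal{C}$ with transitions $g$, so that $\Delta([\pi])=\zeta_{\mathcal{C}}([g])$. Then $\Theta(\Delta([\pi]))=\Theta(\zeta_{\mathcal{C}}([g]))=\eta_{\mathcal{C}}([g])$ is the class of the bundle reconstructed from $g$ via Construction \ref{cons-1}. Proposition \ref{prop-sur} says precisely that $\pi$ is isomorphic to this reconstructed bundle, whence $\Theta(\Delta([\pi]))=[\pi]$.

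The substantive content has already been discharged in Propositions \ref{prop-cons} and \ref{prop-sur}, so the remaining step is essentially formal bookkeeping. The only points that require genuine care are the joint surjectivity of the colimit maps $\zeta_{\mathcal{C}}$ (equivalently, that $\mathsf{CGF}(X)$ is directed under refinement, so that any two representatives may be compared over a common refinement) and the compatibility $\Delta\circ\eta_{\mathcal{C}}=\zeta_{\mathcal{C}}$, which depends on $\Delta$ being independent of the chosen trivializing family. I expect the main obstacle, if any, to lie in confirming this independence across different covering generating families; but that is exactly what the well-definedness of $\Delta$ established above guarantees, so no further difficulty should arise, and $\Theta$ is a bijection.
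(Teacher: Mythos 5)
Your proof is correct and follows essentially the same route as the paper: the paper's own proof simply states that, in view of Propositions \ref{prop-cons} and \ref{prop-sur}, the map $\Delta$ is the inverse of $\Theta$, which is exactly the two-sided-inverse verification you carry out in detail. Your added bookkeeping (joint surjectivity of the colimit maps $\zeta_{\mathcal{C}}$ and the compatibility $\Delta\circ\eta_{\mathcal{C}}=\zeta_{\mathcal{C}}$ via Proposition \ref{prop-cons}) is precisely what the paper leaves implicit under the word ``straightforward.''
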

\begin{proof}
	In view of Propositions \ref{prop-cons} and \ref{prop-sur}, it is straightforward to see that
	$ \Delta $ is the inverse of $ \Theta $.
\end{proof}

\appendix
\section{Other versions of \v{C}ech cohomology for diffeological spaces}\label{A}
\subsection{\v{C}ech cohomology on the site of D-open subsets} 

In general, \v{C}ech cohomology could be developed to (pre)sheaves on Grothendieck sites (see, e.g., \cite{T}).
For a diffeological space $ X $, we can consider two natural sites D-$\textbf{Open}(X) $ of D-open subsets and $ \textbf{Plots}(X) $ of plots.
\v{C}ech cohomology on the site $ \textbf{Plots}(X) $ gives rise to the presheaf $ \check{H}^{\bullet} $ assigning $ P\mapsto \check{H}^{\bullet}(\mathrm{dom}(P);A_P)  $, where $ \check{H}^{\bullet}(\mathrm{dom}(P);A_P) $ is the usual \v{C}ech cohomology with coefficients in $ A_P $ on  $ \mathrm{dom}(P) $.
However, \v{C}ech cohomology on the site D-$\textbf{Open}(X) $ is the same as the usual \v{C}ech cohomology with coefficients in $ \Sigma A $ on D-topological space $ X $.
In this appendix, we show that \v{C}ech cohomology on the site D-$\textbf{Open}(X) $ is the same as \v{C}ech cohomology in the framework of provided in Section \ref{S3}, which has been suggested in \cite{DA2018}.

Let $ \mathcal{O}=\{O_{\alpha}\}_{\alpha\in I} $ be a D-open covering of a diffeological space $ X $ and let $ A $ be a presheaf on $ X $. Then $ \mathcal{O}_P=P^{-1} \mathcal{O}=\{P^{-1}(O_{\alpha}) \mid \alpha\in I\} $ is a covering of the domain of definition of any plot $ P $ in $ X $.
Set
\begin{center}
	$ P_{\alpha_0,\dots,\alpha_n}:=P|_{P^{-1}(O_{\alpha_0}\cap\cdots\cap O_{\alpha_n})}=P|_{P^{-1}(O_{\alpha_0})}\times_P\cdots\times_P P|_{P^{-1}(O_{\alpha_n})} $
\end{center}
for every $ n $-simplexes $ O_{\alpha_0}\cap\cdots\cap O_{\alpha_n} $ of elements of $ \mathcal{O} $.
In particular, a plot $ P $ is the supremum of the family $ \{P_{\alpha}\}_{\alpha\in I} $.

For every nonnegative integer $ n $, define the presheaf $ \check{C}^n(\mathcal{O};A) $ assigning to each plot $ P $ in $ X $, the \v{C}ech $ n $-cochain group 
\begin{center}
	$ \check{C}^n(\mathcal{O};A)(P):=\displaystyle\prod_{n-\mathrm{simplexes}} A(P_{\alpha_0,\dots,\alpha_n})$, 
\end{center}
subordinate to the cover $ \mathcal{O}_P $ on $ \mathrm{dom}(P) $ with
coefficients in the presheaf $ A(P) $, and to each morphism $ Q\stackrel{F}{\longrightarrow} P $, the pullback homomorphism $ F^{\star}:\check{C}^n(\mathcal{O}_P;A(P))\rightarrow\check{C}^n(\mathcal{O}_Q;A(Q)) $ that takes $ c $ to $ F^{\star}(c) $
such that
\begin{center}
	$ F^{\star}(c)\big{(} Q_{\alpha_0,\dots,\alpha_n}\big{)} :=F_{\alpha_0,\dots,\alpha_n}^*\big{(} c(P_{\alpha_0,\dots,\alpha_n})\big{)}$.
\end{center}
Also, 
consider the morphisms $ d:\check{C}^n(\mathcal{O};A)\rightarrow\check{C}^{n+1}(\mathcal{O};A) $ of
the \v{C}ech coboundary operators on the domains of plots.
Hence one obtains 
the associated cochain complex
\begin{center}
	$ 0\stackrel{}{\longrightarrow}  \Sigma  \check{C}^0(\mathcal{O};A)(X)\stackrel{\Sigma d}{\longrightarrow} \Sigma  \check{C}^1(\mathcal{O};A)(X)\stackrel{\Sigma d}{\longrightarrow} \Sigma  \check{C}^2(\mathcal{O};A)(X)\stackrel{\Sigma d}{\longrightarrow}\cdots $
\end{center}
of groups of sections and the cohomology groups 
$ H^k(\Sigma  \check{C}^{\bullet}(\mathcal{O};A))  $.
% and $\Sigma H^k(\check{C}^{\bullet}) $.
% $ H^k(\check{C}^{\bullet}) $ is the same as the presheaf $ \check{H}^k $.

Moreover, a sheaf $ A $ induces an ordinary sheaf $ \Sigma A $ on D-topological space $ X $ by \cite[Theorem 4.2]{DA}, and sheaves  $ A_P $ on $ \mathrm{dom}(P) $ for plots $ P $ in $ X $ by definition.
\begin{proposition} 
	$ \Big{(}\check{C}^{\bullet}(\mathcal{O};\Sigma A),d_X\Big{)} $
	and
	$ \Big{(}\Sigma  \check{C}^{\bullet}(\mathcal{O};A)(X),\Sigma d\Big{)} $  
	are isomorphic.
\end{proposition}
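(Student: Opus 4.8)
The plan is to build, for each degree $n$, a natural isomorphism of abelian groups
\[
\Phi^n:\Sigma\check{C}^n(\mathcal{O};A)(X)\stackrel{\cong}{\longrightarrow}\check{C}^n(\mathcal{O};\Sigma A),
\]
and then to verify that the family $\{\Phi^n\}_{n\geq 0}$ intertwines $\Sigma d$ with $d_X$, so that it is an isomorphism of cochain complexes. The construction rests on two reductions. First, because taking sections is a limit and limits commute with products, one has
\[
\Sigma\check{C}^n(\mathcal{O};A)(X)\cong\prod_{(\alpha_0,\dots,\alpha_n)}\Sigma A^{\alpha_0\cdots\alpha_n}(X),
\]
where $A^{\alpha_0\cdots\alpha_n}$ denotes the presheaf on the site of plots of $X$ given by $P\mapsto A(P_{\alpha_0,\dots,\alpha_n})$. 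Second, for each fixed simplex $(\alpha_0,\dots,\alpha_n)$ I would establish the identification
\[
\Sigma A^{\alpha_0\cdots\alpha_n}(X)\cong(\Sigma A)(O_{\alpha_0,\dots,\alpha_n}),
\]
whose right-hand side is by definition $\Sigma(A|_{O_{\alpha_0,\dots,\alpha_n}})(O_{\alpha_0,\dots,\alpha_n})$, the sections of $A$ over the diffeological subspace $O_{\alpha_0,\dots,\alpha_n}$. Composing the two reductions produces $\Phi^n$.

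The heart of the proof is this per-simplex identification. The key observation is that the plots of the subspace $O_{\alpha_0,\dots,\alpha_n}$ are exactly the plots $R$ of $X$ whose image lies in $O_{\alpha_0}\cap\cdots\cap O_{\alpha_n}$, and for every such $R$ one has $R_{\alpha_0,\dots,\alpha_n}=R$. Hence, given $\sigma\in\Sigma A^{\alpha_0\cdots\alpha_n}(X)$, the assignment $R\mapsto\sigma(R)\in A(R)$, with $R$ ranging over plots in the intersection, is a section of $A|_{O_{\alpha_0,\dots,\alpha_n}}$, its compatibility being a restriction of that of $\sigma$. Conversely, a section $t$ of $A|_{O_{\alpha_0,\dots,\alpha_n}}$ determines $\sigma$ by $\sigma(P)=t(P_{\alpha_0,\dots,\alpha_n})$ for an arbitrary plot $P$ in $X$, since $P_{\alpha_0,\dots,\alpha_n}$ is a plot of the subspace; compatibility under a morphism $Q\stackrel{F}{\longrightarrow}P$ holds because $F$ restricts to a morphism $Q_{\alpha_0,\dots,\alpha_n}\to P_{\alpha_0,\dots,\alpha_n}$ in the subspace site, which $t$ respects. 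That these assignments are mutually inverse uses the idempotence $(P_{\alpha_0,\dots,\alpha_n})_{\alpha_0,\dots,\alpha_n}=P_{\alpha_0,\dots,\alpha_n}$ together with the fact that the inclusion $P_{\alpha_0,\dots,\alpha_n}\hookrightarrow P$ induces the identity on the $(\alpha_0,\dots,\alpha_n)$-component of $A^{\alpha_0\cdots\alpha_n}$. Unwinding the composite, $\Phi^n(\sigma)$ is the \v{C}ech cochain whose $(\alpha_0,\dots,\alpha_n)$-entry is the section $R\mapsto\sigma(R)_{\alpha_0,\dots,\alpha_n}$, and its inverse sends $c$ to the section with $\sigma(P)_{\alpha_0,\dots,\alpha_n}=c_{\alpha_0,\dots,\alpha_n}(P_{\alpha_0,\dots,\alpha_n})$.

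It then remains to check that $\Phi^\bullet$ is a chain map, i.e.\ $d_X\circ\Phi^n=\Phi^{n+1}\circ\Sigma d$. Both differentials are alternating sums of face maps with matching signs, so it suffices to match the individual faces. On the section side the $i$-th face of $\check{C}^\bullet(\mathcal{O};\Sigma A)$ is restriction of sections of $\Sigma A$ along the inclusion $O_{\alpha_0,\dots,\alpha_{n+1}}\hookrightarrow O_{\alpha_0,\dots,\widehat{\alpha_i},\dots,\alpha_{n+1}}$, whereas $\Sigma d$ is induced plotwise by the \v{C}ech coboundary on each $\mathrm{dom}(P)$, whose $i$-th face is the pullback along the inclusion $P_{\alpha_0,\dots,\alpha_{n+1}}\hookrightarrow P_{\alpha_0,\dots,\widehat{\alpha_i},\dots,\alpha_{n+1}}$. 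Under the identification of the previous paragraph these two inclusions correspond, so the faces agree termwise. I expect the main obstacle to be the per-simplex identification itself --- in particular the careful verification that $\sigma\mapsto t$ and $t\mapsto\sigma$ are well-defined sections and genuinely inverse to one another; once that natural bijection is secured, the product decomposition and the differential compatibility are essentially formal.
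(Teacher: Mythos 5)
Your proposal is correct and, once unwound, coincides with the paper's own proof: your $\Phi^n$ and its inverse are exactly the paper's maps $\Delta(\sigma)(\alpha_0,\dots,\alpha_n)(P)=\sigma(P)(\alpha_0,\dots,\alpha_n)$ and $\Theta(c)(P)(\alpha_0,\dots,\alpha_n)=c(\alpha_0,\dots,\alpha_n)(P_{\alpha_0,\dots,\alpha_n})$, and your key verifications (compatibility along morphisms of plots, mutual inversion via $R_{\alpha_0,\dots,\alpha_n}=R$ for plots in the subspace and the identity induced on components by the inclusion $P_{\alpha_0,\dots,\alpha_n}\hookrightarrow P$, and termwise matching of the \v{C}ech faces) are the same computations the paper carries out. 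The only difference is organizational: you factor the isomorphism through the product decomposition $\Sigma\check{C}^n(\mathcal{O};A)(X)\cong\prod_{(\alpha_0,\dots,\alpha_n)}\Sigma A^{\alpha_0\cdots\alpha_n}(X)$ and the per-simplex identification $\Sigma A^{\alpha_0\cdots\alpha_n}(X)\cong\Sigma A(O_{\alpha_0,\dots,\alpha_n})$, which the paper verifies monolithically without isolating as a lemma.
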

\begin{proof}
	Consider the homomorphism  $\Theta:\check{C}^n(\mathcal{O};\Sigma A)\rightarrow\Sigma\check{C}^n(\mathcal{O};A)(X) $
	taking $ c\in \displaystyle\prod_{n-\mathrm{simplexes}} \Sigma A(O_{\alpha_0,\dots,\alpha_n}) $
	to $ \Theta(c)\in \Sigma\check{C}^n(\mathcal{O};A)(X) $ defined by 
	\begin{center}
		$ \Theta(c)(P)(\alpha_0,\dots,\alpha_n) :=c(\alpha_0,\dots,\alpha_n)(P_{\alpha_0,\dots,\alpha_n}), $ 
	\end{center}
	for every plot $ P $ in $ X $.
	To see that $ \Theta(c) $ is well-defined, let $ Q\stackrel{F}{\longrightarrow} P $ be a morphism of plots in $ X $.
	Then 
	\begin{align*}
		F^{\star}\big{(}\Theta(c)(P)\big{)}(\alpha_0,\dots,\alpha_n)&=F_{\alpha_0,\dots,\alpha_n}^*\big{(} \Theta(c)(P)(\alpha_0,\dots,\alpha_n)\big{)}\\
		&=F_{\alpha_0,\dots,\alpha_n}^*\big{(} c(\alpha_0,\dots,\alpha_n)(P_{\alpha_0,\dots,\alpha_n})\big{)}\\
		&=c(\alpha_0,\dots,\alpha_n)(Q_{\alpha_0,\dots,\alpha_n})\\
		&=\Theta(c)(Q)(\alpha_0,\dots,\alpha_n),\\
	\end{align*}
	and so $ F^{\star}\big{(}\Theta(c)(P)\big{)}=\Theta(c)(Q)$.
	
	On the other hand, 	consider the map  $\Delta:\Sigma\check{C}^n(\mathcal{O};A)(X)\rightarrow\check{C}^n(\mathcal{O};\Sigma A) $
	taking $ \sigma\in \Sigma\check{C}^n(\mathcal{O};A)(X) $
	to $ \Delta(\sigma)\in \check{C}^n(\mathcal{O};\Sigma A) $ defined by 
	%\begin{center}
	$ \Delta(\sigma)(\alpha_0,\dots,\alpha_n)(P) :=\sigma(P)(\alpha_0,\dots,\alpha_n), $ 
	%\end{center}
	for every plot $ P $ in $ O_{\alpha_0,\dots,\alpha_n} $.
	To observe that $ \Delta(\sigma) $ is well-defined, let $ Q\stackrel{F}{\longrightarrow} P $ be a morphism of plots in $ O_{\alpha_0,\dots,\alpha_n} $.
	We get 
	\begin{align*}
		F^*\big{(}\Delta(\sigma)(\alpha_0,\dots,\alpha_n)(P)\big{)}&=	F^*\big{(}\sigma(P)(\alpha_0,\dots,\alpha_n)\big{)}\\
		&=F^{\star} \big{(} \sigma(P)\big{)}(\alpha_0,\dots,\alpha_n)\\
		&=\sigma(Q)(\alpha_0,\dots,\alpha_n)\\
		&=\Delta(\sigma)(\alpha_0,\dots,\alpha_n)(Q).\\
	\end{align*}
	
	For every $ c\in \displaystyle\prod_{n-\mathrm{simplexes}} \Sigma A(O_{\alpha_0,\dots,\alpha_n}) $, $ (\alpha_0,\dots,\alpha_n) $ and for each plot $ P $ in $ O_{\alpha_0,\dots,\alpha_n} $, we have
	\begin{align*}
		\Delta\big{(}\Theta(c)\big{)}(\alpha_0,\dots,\alpha_n)(P)&=\Theta(c)(P)(\alpha_0,\dots,\alpha_n)\\
		&= c(\alpha_0,\dots,\alpha_n)(P_{\alpha_0,\dots,\alpha_n})\\
		&=c(\alpha_0,\dots,\alpha_n)(P)\\
	\end{align*}
	so that $ \Delta\circ\Theta=\mathrm{id} $.
	To check that $ \Theta\circ\Delta=\mathrm{id} $, for every $ \sigma\in \Sigma\check{C}^n(\mathcal{O};A)(X) $, $ (\alpha_0,\dots,\alpha_n) $ and for each plot $ P $ in $ X $, we get
	\begin{align*}
		\Theta\big{(}\Delta(\sigma)\big{)}(P)(\alpha_0,\dots,\alpha_n)&=\Delta(\sigma)(\alpha_0,\dots,\alpha_n)(P_{\alpha_0,\dots,\alpha_n})\\
		&= \sigma(P_{\alpha_0,\dots,\alpha_n})(\alpha_0,\dots,\alpha_n)\\
		&=\sigma(P)\upharpoonright{P_{\alpha_0,\dots,\alpha_n}}(\alpha_0,\dots,\alpha_n)\\
		&=\sigma(P)(\alpha_0,\dots,\alpha_n),\\
	\end{align*}
	Therefore, $ \Theta $ is an isomorphism.
	
	Finally, to verify that	$ \Sigma d \circ\Theta=\Theta\circ d_X $, let $ c\in \displaystyle\prod_{n-\mathrm{simplexes}} \Sigma A(O_{\alpha_0,\dots,\alpha_n}) $. Then
	\begin{align*}
		\Sigma d \big{(}\Theta(c)\big{)}(P)(\alpha_0,\dots,\alpha_{n+1})
		&= d_P \big{(}\Theta(c)(P)\big{)}(\alpha_0,\dots,\alpha_{n+1})\\
		&= \sum_{k=0}^{n+1}(-1)^k~~\Theta(c)(P)(\alpha_0,\dots,\widehat{\alpha_k},\dots,\alpha_{n+1})\upharpoonright{P_{\alpha_0,\dots,\alpha_{n+1}}}\\
		&= \sum_{k=0}^{n+1}(-1)^k~~c(\alpha_0,\dots,\widehat{\alpha_k},\dots,\alpha_{n+1})(P_{\alpha_0,\dots,\widehat{\alpha_k},\dots,\alpha_{n+1}})\upharpoonright{P_{\alpha_0,\dots,\alpha_{n+1}}}\\
		&= \sum_{k=0}^{n+1}(-1)^k~~c(\alpha_0,\dots,\widehat{\alpha_k},\dots,\alpha_{n+1})(P_{\alpha_0,\dots,\alpha_{n+1}})\\
		&=\big{(}d_X(c)\big{)}(\alpha_0,\dots,\alpha_{n+1})(P_{\alpha_0,\dots,\alpha_{n+1}})\\
		&=\Theta\big{(}d_X(c)\big{)}(P)(\alpha_0,\dots,\alpha_{n+1}).\\
	\end{align*}
\end{proof}

\begin{corollary} 
	If $ A $ is a sheaf on a diffeological space $X$, then the groups $ H^0\big{(}\Sigma  \check{C}^{\bullet}(\mathcal{O};A)\big{)} $ and $\Sigma A(X) $ are isomorphic.
\end{corollary}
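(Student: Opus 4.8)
The plan is to reduce the statement to the preceding Proposition together with the sheaf axiom for $\Sigma A$ on the D-topological space $X$. First I would invoke the Proposition just proved, which supplies an isomorphism of cochain complexes $\Theta\colon\big(\check{C}^{\bullet}(\mathcal{O};\Sigma A),d_X\big)\xrightarrow{\ \cong\ }\big(\Sigma\check{C}^{\bullet}(\mathcal{O};A)(X),\Sigma d\big)$. Since an isomorphism of cochain complexes induces isomorphisms on all cohomology groups, in degree zero it yields
\[
H^0\big(\Sigma\check{C}^{\bullet}(\mathcal{O};A)\big)\cong H^0\big(\check{C}^{\bullet}(\mathcal{O};\Sigma A),d_X\big).
\]
Thus it suffices to identify the right-hand side with $\Sigma A(X)$.

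Next I would recall that, since $A$ is a sheaf on $X$, the assignment $\Sigma A$ is an ordinary sheaf on the D-topological space $X$ (this is the theorem quoted earlier, also \cite[Theorem 4.2]{DA}). Now $\big(\check{C}^{\bullet}(\mathcal{O};\Sigma A),d_X\big)$ is precisely the usual \v{C}ech cochain complex of the D-open cover $\mathcal{O}=\{O_\alpha\}_{\alpha\in I}$ with coefficients in this sheaf, so its zeroth cohomology is the kernel of the first \v{C}ech coboundary,
\[
H^0\big(\check{C}^{\bullet}(\mathcal{O};\Sigma A)\big)=\ker\Big(\prod_{\alpha}\Sigma A(O_\alpha)\xrightarrow{\ d_X\ }\prod_{\alpha,\beta}\Sigma A(O_\alpha\cap O_\beta)\Big),
\]
where $d_X$ sends a family $(s_\alpha)_\alpha$ to $\big(s_\beta|_{O_\alpha\cap O_\beta}-s_\alpha|_{O_\alpha\cap O_\beta}\big)_{\alpha,\beta}$.

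The final step is the classical computation: this kernel consists of exactly those families $(s_\alpha)_\alpha$ that agree on overlaps, and the separation and gluing axioms for the sheaf $\Sigma A$ identify such compatible families, via restriction, with global sections over $X=\bigcup_\alpha O_\alpha$. Hence $H^0\big(\check{C}^{\bullet}(\mathcal{O};\Sigma A)\big)\cong\Sigma A(X)$, and composing with the isomorphism of the first paragraph gives $H^0\big(\Sigma\check{C}^{\bullet}(\mathcal{O};A)\big)\cong\Sigma A(X)$, as claimed. I do not expect a genuine obstacle here: the only points requiring care are that the degree-zero part of $\Theta$ is compatible with the differentials (already guaranteed, since $\Theta$ is a chain isomorphism) and that one must invoke the full sheaf property of $\Sigma A$ rather than merely its presheaf structure; the degree-zero \v{C}ech computation is then just the equalizer description of global sections.
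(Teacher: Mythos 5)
Your proposal is correct and matches the intended argument: the paper states this corollary without proof precisely because it follows by combining the preceding proposition (the chain isomorphism $\Theta$) with the fact that $\Sigma A$ is an ordinary sheaf on the D-topological space $X$, whose degree-zero \v{C}ech cohomology for any open cover is its global sections. Your only additions — spelling out the equalizer computation and noting that the full sheaf property of $\Sigma A$ is what makes it work — are exactly the steps the paper leaves implicit.
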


\subsection{Basic sections and the \v{C}ech cohomology due to Krepski-Watts-Wolbert} 
Here we show that for a sheaf $ A $, the $ 0 $th \v{C}ech cohomology group $ \check{H}_{KWW}^{0}(\mathcal{C};A) $  due to Krepski-Watts-Wolbert in \cite{KWW}, the group of sections of $ A $, and the group of basic sections $ \Sigma_{basic} ev^*A(\textsf{Nebula}(\mathcal{C})) $   are all isomorphic.

\begin{definition}
	Any smooth map $ f:X\rightarrow Y $ induces a morphism of sites $ \overline{f}:X_{\mathsf{Plots}}\rightarrow Y_{\mathsf{Plots}}$ by taking $ Q\stackrel{F}{\longrightarrow} P $ to $ f\circ Q\stackrel{F}{\longrightarrow} f\circ P $. 
	Let $ A $ be a presheaf on diffeological space $ Y $. 
	The \textbf{pullback} $ f^*A $ of the presheaf $ A $ by $ f $ on $ X $ is given by $ f^*A:=A\circ\overline{f} $.
	This gives rise to a natural map 	between sections:
	\begin{center}
		$ f^*:\Sigma A(Y) \rightarrow \Sigma f^*A(X),\quad (f^*\sigma)(P)=\sigma(f\circ P) $ 
	\end{center}
	
\end{definition}
\begin{definition}
	Let $ A $ be a presheaf on diffeological space $ Y $ and let $ f:X\rightarrow Y $ be a smooth map. 
	A section $ \sigma $ of $ f^*A $ is \textbf{basic} if we have the following implication for all plots $ P $ and $ P' $ in $ X $:
	\begin{center}
		$ f\circ P=f\circ P'\quad\Longrightarrow\quad \sigma(P)=\sigma(P'). $ 
	\end{center} 
	We denote by  $ \Sigma_{basic} f^*A(X) $ the set of basic sections of $ f^*A $.
\end{definition}

\begin{proposition}\label{p-KWW}
	Let $ A $ be a sheaf of abelian groups on diffeological space $ Y $. 
	If $ f:X\rightarrow Y $ is a subduction,
	then $ \Sigma_{basic} f^*A(X) $ and $ \Sigma A(Y) $ are isomorphic.
\end{proposition}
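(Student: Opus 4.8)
The plan is to exhibit the natural pullback $f^{*}\colon \Sigma A(Y)\to \Sigma f^{*}A(X)$ as an isomorphism onto the subgroup of basic sections, by factoring it through the sections over the covering generating family cut out by $f$. First I would record two immediate observations. On the one hand, for any $\sigma\in\Sigma A(Y)$ the section $f^{*}\sigma$ is automatically basic: if $f\circ P=f\circ P'$ then $(f^{*}\sigma)(P)=\sigma(f\circ P)=\sigma(f\circ P')=(f^{*}\sigma)(P')$, so $f^{*}$ corestricts to a homomorphism $f^{*}\colon\Sigma A(Y)\to\Sigma_{basic}f^{*}A(X)$. On the other hand, since $f$ is a subduction, the family $\mathcal{C}:=\{f\circ P\mid P\text{ a plot in }X\}$ is a covering generating family for $Y$, and every element of $\lfloor\mathcal{C}\rfloor$ has the form $f\circ\widetilde P$ for some plot $\widetilde P$ in $X$ (write $(f\circ P)\circ F=f\circ(P\circ F)$ and use axiom D2). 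Because $A$ is a sheaf it is a quasi-sheaf, so the canonical map $\alpha_{\mathcal{C}}\colon\Sigma A(Y)\to\Sigma A(\mathcal{C})$ is an isomorphism.

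The heart of the argument is to identify $\Sigma_{basic}f^{*}A(X)$ with $\Sigma A(\mathcal{C})$. I would define $\Phi\colon\Sigma_{basic}f^{*}A(X)\to\Sigma A(\mathcal{C})$ by $\Phi(\tau)(S):=\tau(\widetilde P)$ whenever $S=f\circ\widetilde P\in\lfloor\mathcal{C}\rfloor$; this is forced, and it is exactly the basic condition that makes $\Phi(\tau)(S)$ independent of the chosen lift $\widetilde P$ (two plots $\widetilde P,\widetilde P'$ in $X$ with $f\circ\widetilde P=f\circ\widetilde P'=S$ give $\tau(\widetilde P)=\tau(\widetilde P')$). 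To check $\Phi(\tau)\in\Sigma A(\mathcal{C})$ I would verify compatibility: for a morphism $S'\stackrel{H}{\longrightarrow}S$ in $\lfloor\mathcal{C}\rfloor$ with $S=f\circ\widetilde P$, the plot $\widetilde P\circ H$ lifts $S'$, so $\Phi(\tau)(S')=\tau(\widetilde P\circ H)=H^{*}\tau(\widetilde P)=H^{*}\Phi(\tau)(S)$, where the middle equality uses that $\tau$ is a section of $f^{*}A$ along the morphism $\widetilde P\circ H\stackrel{H}{\longrightarrow}\widetilde P$ and that $\overline f$ sends it to $S'\stackrel{H}{\longrightarrow}S$. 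In the reverse direction I would set $\Psi\colon\Sigma A(\mathcal{C})\to\Sigma_{basic}f^{*}A(X)$, $\Psi(\rho)(P):=\rho(f\circ P)$; a short check shows $\Psi(\rho)$ is a section (compatibility of $\rho$ along $f\circ Q\stackrel{F}{\longrightarrow}f\circ P$) and is basic.

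Finally I would verify $\Phi$ and $\Psi$ are mutually inverse: $\Phi(\Psi(\rho))(S)=\Psi(\rho)(\widetilde P)=\rho(f\circ\widetilde P)=\rho(S)$ and $\Psi(\Phi(\tau))(P)=\Phi(\tau)(f\circ P)=\tau(P)$. Since moreover $\Psi\circ\alpha_{\mathcal{C}}=f^{*}$ by construction (both send $\sigma$ to the section $P\mapsto\sigma(f\circ P)$), the pullback $f^{*}=\Psi\circ\alpha_{\mathcal{C}}$ is a composite of isomorphisms, which proves the claim and exhibits $\Sigma_{basic}f^{*}A(X)\cong\Sigma A(Y)$ naturally via $f^{*}$. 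The one genuinely delicate point — the main obstacle — is the well-definedness and section-compatibility of $\Phi$, i.e.\ checking that passing from a section of $f^{*}A$ to a section over $\lfloor\mathcal{C}\rfloor$ neither depends on the choice of lift nor breaks functoriality; both are precisely controlled by the basic hypothesis, while the final descent from $\lfloor\mathcal{C}\rfloor$ to all of $Y$ is supplied for free by the quasi-sheaf isomorphism $\alpha_{\mathcal{C}}$.
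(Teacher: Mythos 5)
Your proof is correct, but it takes a genuinely different route from the paper's. The paper argues by hand: it first shows $f^*:\Sigma A(Y)\rightarrow\Sigma f^*A(X)$ is injective, and then, given a basic section $\sigma$ of $f^*A$, constructs a preimage $\tau\in\Sigma A(Y)$ directly --- writing each plot $P$ in $Y$ as the supremum of a compatible family $P_i=f\circ L_i$, setting $\tau(P_i):=\sigma(L_i)$, checking agreement on overlaps via the section property of $\sigma$, and invoking the sheaf gluing axiom of $A$ to produce $\tau(P)$, with basicness guaranteeing independence of the choices made. You instead factor $f^*$ as $\Psi\circ\alpha_{\mathcal{C}}$ through the sections over the covering generating family $\mathcal{C}=\{f\circ P\}$: your bijection $\Phi,\Psi$ between $\Sigma_{basic}f^*A(X)$ and $\Sigma A(\mathcal{C})$ is purely formal and valid for an arbitrary presheaf (the key point being that every element of $\lfloor\mathcal{C}\rfloor$ genuinely lifts through $f$), and the sheaf hypothesis enters exactly once, through the paper's theorem that every sheaf is a quasi-sheaf, which makes $\alpha_{\mathcal{C}}$ an isomorphism. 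What your route buys: modularity, a slightly stronger statement (the conclusion holds verbatim for quasi-sheaves, and the identification $\Sigma_{basic}f^*A(X)\cong\Sigma A(\mathcal{C})$ requires no hypothesis on $A$ at all), and it sidesteps a small imprecision in the paper --- the paper's injectivity step asserts that every plot $P$ in $Y$ equals $f\circ L$ globally for some plot $L$ in $X$, which a subduction only guarantees locally (after passing to a compatible family), whereas your lifting claim is made only for elements of $\lfloor\mathcal{C}\rfloor$, where it is literally true. What the paper's route buys: it is self-contained, performing the gluing explicitly from the sheaf axiom rather than outsourcing it to the quasi-sheaf theorem, whose own proof is essentially that same gluing argument.
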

\begin{proof}
	First, we prove that the homomorphism $ f^*:\Sigma A(Y) \rightarrow \Sigma f^*A(X) $ is injective.
	Let $ f^*\sigma=f^*\sigma' $ and $ P $ be any plot in $ Y $. There is a plot $ L $ in $ X $ such that $ P=f\circ L $.
	Then 
	\begin{center}
		$ \sigma(P)=\sigma(f\circ L)=(f^*\sigma)(L)=(f^*\sigma')(L)=\sigma'(f\circ L)=\sigma'(P) $
	\end{center}
	and $ \sigma=\sigma' $.
	Thus, $ f^* $ is an isomorphism on its image. We now show that $ Im(f^*)=\Sigma_{basic} f^*A(X) $. 
	It is obvious that $ Im(f^*)\subseteq\Sigma_{basic} f^*A(X) $, so let $ \sigma $ be a basic section of $ f^*A $.
	Let $ P $ be a plot in $ Y $.	Since $ f $ is a subduction, $ P $
	is as the supremum of a compatible family 
	$\lbrace P_i \rbrace_{i\in J}$ such that $ P_i=f\circ L_i $ for some plot $ L_i:U_i\rightarrow X $ in $ X $.
	Set 
	\begin{center}
		$ \tau(P_i):=\sigma(L_i)\in f^*A (L_i)=A(P_i) $. 
	\end{center}
	Because $ \sigma $ is a section, for every $i, j\in J$, we have
	\begin{align*}
		\tau(P_i)\upharpoonright{P_{ij}}=\imath_i^*(\tau(P_i))=\imath_i^*(\sigma(L_i))
		=\imath_j^*(\sigma(L_j))=\imath_j^*(\tau(P_j))=\tau(P_j)\upharpoonright{P_{ij}}
	\end{align*}
	%   Notice that  $ L_{ij}\stackrel{\imath_i}{\longrightarrow} L_i $ and $ P_{ij}\stackrel{\imath_i}{\longrightarrow} P_i $ are the same.
	By sheaf property, there exists an element $ \tau(P)\in A(P) $ with $ \tau(P)\upharpoonright{P_i}=\tau(P_i) $, for every $i\in J$.
	By the basic condition, $ \tau(P) $ is independent of choice of compatible family 
	$\lbrace P_i \rbrace_{i\in J}$ with  the supremum $ P $.
	Also, one can see that
	\begin{center}
		$ F^*\tau(P)=F^*\sigma(L)=\sigma(L\circ F)=\tau(f\circ L\circ F)=\tau(P\circ F) $.
	\end{center}
	Hence $ \tau $ is a well-defined section of $ A $ such that $ f^*(\tau) = \sigma$. 
\end{proof}
\begin{proposition}
	Suppose $ A $ is a sheaf of abelian groups on $ X $ and let $\mathcal{C}$ be a covering generating family of $ X $.
	Then the groups $ \check{H}_{KWW}^{0}(\mathcal{C};A) $, $ \Sigma_{basic} ev^*A(Nebula(\mathcal{C})) $, and $ \Sigma A(X) $ are all isomorphic.
\end{proposition}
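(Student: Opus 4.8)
The plan is to reduce everything to Proposition \ref{p-KWW} via the natural evaluation map out of the nebula. Write $ev:\mathsf{Nebula}(\mathcal{C})\rightarrow X$ for the smooth map whose restriction to the summand $\mathrm{dom}(P)$ of $\mathsf{Nebula}(\mathcal{C})=\bigsqcup_{P\in\mathcal{C}}\mathrm{dom}(P)$ is the plot $P$ itself. The first step is to verify that $ev$ is a subduction. Indeed, by the definition of the sum diffeology, every plot $L$ of $\mathsf{Nebula}(\mathcal{C})$ locally factors through a single summand $\mathrm{dom}(P)$ as a smooth map $F$ between domains, so that $ev\circ L$ locally equals $P\circ F\in\lfloor\mathcal{C}\rfloor$. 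Hence the family $\{\,ev\circ L\mid L \text{ is a plot in } \mathsf{Nebula}(\mathcal{C})\,\}$ consists of suprema of compatible families drawn from $\lfloor\mathcal{C}\rfloor$ and therefore generates $\langle\mathcal{C}\rangle=\mathcal{D}$; this is exactly the condition that $ev$ be a subduction.

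With this in hand, the second step is to apply Proposition \ref{p-KWW} to the sheaf $A$ on $X$ and the subduction $f=ev$. This immediately yields the isomorphism $\Sigma_{basic}\,ev^*A(\mathsf{Nebula}(\mathcal{C}))\cong\Sigma A(X)$, which identifies two of the three groups in the statement. Note that no additional work is required here, as the hard content of passing between basic sections on the total space of a subduction and sections on the base is already packaged in Proposition \ref{p-KWW}.

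It remains, in the third step, to identify $\check{H}^0_{KWW}(\mathcal{C};A)$ with the group of basic sections $\Sigma_{basic}\,ev^*A(\mathsf{Nebula}(\mathcal{C}))$. This is a matter of unwinding the Krepski--Watts--Wolbert construction of \cite{KWW}: their \v{C}ech complex is built from the iterated fiber products of $\mathsf{Nebula}(\mathcal{C})$ over $X$, its $0$-cochains are the sections of $ev^*A$ over the nebula, and the coboundary $\delta^0$ sends such a section to the difference of its two pullbacks along the two projections $\mathsf{Nebula}(\mathcal{C})\times_X\mathsf{Nebula}(\mathcal{C})\rightarrow\mathsf{Nebula}(\mathcal{C})$. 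A section lies in $\ker\delta^0$ precisely when its values agree whenever the evaluation maps agree, i.e. precisely when it is basic; thus $\check{H}^0_{KWW}(\mathcal{C};A)=\ker\delta^0=\Sigma_{basic}\,ev^*A(\mathsf{Nebula}(\mathcal{C}))$. Chaining this with the isomorphism of the second step gives that all three groups are isomorphic.

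The only genuinely delicate point is this last identification, since it depends on the precise form of the KWW complex rather than on any hard analysis; once the definitions are aligned, the conclusion is formal. In particular, I expect the subduction property of $ev$ and the application of Proposition \ref{p-KWW} to be routine, so that the substance of the argument lies entirely in correctly matching the degree-zero KWW cohomology with the basic-section condition.
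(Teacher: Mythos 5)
Your proof is correct and takes essentially the same route as the paper: identify $\check{H}^0_{KWW}(\mathcal{C};A)$ with $\ker\delta^0$, recognize that kernel as the basic sections $\Sigma_{basic}\,ev^*A(\mathsf{Nebula}(\mathcal{C}))$, and then apply Proposition \ref{p-KWW} to the subduction $ev:\mathsf{Nebula}(\mathcal{C})\rightarrow X$. The only difference is that you spell out the verification that $ev$ is a subduction and the matching of $\ker\delta^0$ with the basic-section condition, both of which the paper asserts without proof.
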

\begin{proof}
	$ \check{H}_{KWW}^{0}(\mathcal{C};A) $ is isomorphic to
	$ \ker\partial_0:\check{C}_{KWW}^{0}(\mathcal{C};A)\rightarrow\check{C}_{KWW}^{1}(\mathcal{C};A) $, where $ \partial_0 $ denotes the $ 0 $th coboundary operator.
	On the other hand, $ \ker\partial_0 $ is exactly $ \Sigma_{basic} ev^*A(Nebula(\mathcal{C})) $ and 
	since
	\begin{center}
		$ ev:\textsf{Nebula}(\mathcal{C})\rightarrow X, ~~~(P,r)\mapsto P(r) $ 
	\end{center}
	is a subduction,
	$ \Sigma_{basic} ev^*A(Nebula(\mathcal{C})) $
	is isomorphic to $ \Sigma A(X) $ by Proposition \ref{p-KWW}.
	%,\quad \partial_0(c)(F_0,F_1)=c(F_0)-c(F_1) $.
	%		 $ ev\circ F_0=ev\circ F_1 $
\end{proof}


\begin{thebibliography}{99}
	\bibitem{ARA} A. Ahmadi, \textit{Submersions, immersions, and \'{e}tale maps in diffeology},  arXiv:2203.05994 (2022). 
	\bibitem{AD}
	A. Ahmadi, A. Dehghan Nezhad,
	\textit{Some aspects of cosheaves on diffeological spaces},
	Categ. Gen. Algebr. Struct. Appl. 12(1) (2020) 123-147.
	
	\bibitem{ADD} A. Ahmadi, A. Dehghan Nezhad, B. Davvaz, \textit{Hypergroups and polygroups in diffeology}, Comm. Algebra 48(6) (2020) 2683-2698. 	
	
	\bibitem{BH}
	J.C. Baez, A.E. Hoffnung, 
	\textit{Convenient categories of smooth spaces},
	Trans. Amer. Math. Soc. 363(11) (2011) 5789-5825. 
	
	\bibitem{BT}
	R. Bott, L.W. Tu, \textit{Differential Forms in Algebraic Topology},
	Vol. 82. New York, Springer, 1982.
	
	%	\bibitem{Br}
	%	G.E. Bredon, 
	%	\textit{Sheaf Theory},
	%	Graduate Texts in Mathematics, Vol. 170, Springer-Verlag, 1997.
	
	
	\bibitem{CW}
	J.D. Christensen, E. Wu, 
	\textit{Smooth classifying spaces},
	Israel J. Math. 241(2)  (2021) 911-954.
	
	\bibitem{Coh}
	R.L. Cohen, \textit{The Topology of Fiber Bundles: Lecture Notes}, Stanford University, 1998, Available at
	\url{https://math.stanford.edu/~ralph/fiber.pdf}
	
	\bibitem{DA2018}
	A. Dehghan Nezhad, A. Ahmadi, 
	\textit{Cohomology theories and sheaves on diffeological spaces},
	49th Annual Iranian Mathematics Conference
	(2018).
	
	\bibitem{DA}
	A. Dehghan Nezhad, A. Ahmadi, 
	\textit{A novel approach to sheaves on diffeological spaces},
	Topology Appl. 263 (2019) 141-153.
	
	%\bibitem{HMV}
	%Hector, G., Macias-Virgos, E.:
	%Diffeological groups.
	%Research and Exposition in Mathematics 25, 247--260 (2002)
	
	%	\bibitem{PI}	P. Iglesias,  
	%	Bicomplexe cohomologique des espaces diff\'{e}rentiables, 
	%	Preprint CPT-87/P.2052, Marseille, France (1987)
	
	\bibitem{Hus}
	D. Husemoller, \textit{Fibre Bundles}, Graduate Texts in Mathematics, Vol. 20, Springer,
	New York, 1994
	
	%	\bibitem{PIZ85}
	%	P. Iglesias-Zemmour, 
	%	\textit{Fibrations diff\'{e}ologiques et homotopie},
	%	Th\'{e}se de Doctorat Es-sciences, L' Universit\'{e} de Provence, Marseille, 1985.
	
	\bibitem{PIZ2013}
	P. Iglesias-Zemmour, 
	\textit{Diffeology},
	Mathematical Surveys and Monographs  185 AMS, 2013.
	
	\bibitem{PIZ2022}
	P. Iglesias-Zemmour, \textit{\v{C}ech-De-Rham bicomplex in diffeology}, Israel J. Math. (2022) to appear. 
	
	\bibitem{KWW}
	D. Krepski, J. Watts, S. Wolbert, \textit{Sheaves, principal bundles, and \v{C}ech cohomology for diffeological spaces}, Preprint arXiv:2111.01032 (2022).
	
	\bibitem{M}
	S. Mac Lane, 
	\textit{Categories for the Working Mathematician},
	Springer-Verlag, 1998.
	
	
	%		\bibitem{MM}
	%		S. Mac Lane, I. Moerdijk, 
	%		\textit{Sheaves in Geometry and Logic:
		%		A First Introduction to Topos Theory}, Springer-Verlag,  1992.
	
	\bibitem{Nic}
	L.I. Nicolaescu, \textit{Lectures on the Geometry of Manifolds}, World Scientific, 2008.
	
	%	\bibitem{Sch}
	%	A. Schmeding, 
	%	\textit{The Diffeomorphism Group of a Non-Compact Orbifold}, 
	%	arXiv:1301.5551v4.
	
	\bibitem{JMS} 
	J.-M. Souriau, 
	\textit{Groupes diff\'{e}rentiels},
	in: Differential geometrical methods in mathematical physics, Proc. Conf., Aix-en-Provence/Salamanca, 1979, in:
	Lecture Notes in Math. 836, Springer Verlag, 1980, 91-128. 
	
	\bibitem{T}
	G. Tamme,  
	\textit{Introduction to \'{E}tale Cohomology},
	Translated by Manfred Kolster, 
	Springer-Verlag, 1994.
	
	%\bibitem{AP}
	%Prasolov, A.V.:
	%Cosheafification.  
	%Theory and Applications of Categories, \textbf{31}(38), 1134--1175 (2016)
	
	
\end{thebibliography}
\end{document}